\newcommand\q{\enquote}
\newcommand{\A}{\mathbb{A}}
\newcommand{\C}{\mathbb{C}}
\newcommand{\N}{\mathbb{N}}
\newcommand{\R}{\mathbb{R}}
\newcommand{\Z}{\mathbb{Z}}
\newcommand{\calB}{\mathcal{B}}
\newcommand{\calK}{\mathcal{K}}
\newcommand{\calL}{\mathcal{L}}
\DeclareMathOperator{\id}{id} 
\DeclareMathOperator{\one}{\mathbbm{1}} 
\DeclareMathOperator{\re}{Re} 
\DeclareMathOperator{\dist}{dist} 
\newcommand{\argument}{\mathord{\,\cdot\,}} 
\newcommand{\dx}{\;\mathrm{d}} 
\newcommand{\norm}[1]{\left\lVert #1 \right\rVert} 
\newcommand{\modulus}[1]{\left\lvert #1 \right\rvert} 
\newcommand{\dom}[1]{\operatorname{dom} \left( #1 \right)} 
\newcommand{\inner}[2]{(#1|#2)}
\newcommand{\intt}{{\rm int}\,}
\newcommand{\qintt}{{\rm qint}\,}
\newcommand{\sa}{{\operatorname{sa}}} 
\newcommand{\spec}{\sigma} 
\newcommand{\specEss}{\sigma_{\operatorname{ess}}} 
\newcommand{\spr}{r} 
\newcommand{\sprEss}{r_{\operatorname{ess}}} 
\newcommand{\resSet}{\rho} 
\newcommand{\Res}{\mathcal{R}} 
\newcommand{\spb}{s} 
\newcommand{\realspb}{\spb_{\R}} 
\newcommand{\spbEss}{s_{\operatorname{ess}}} 
\newcommand{\Implies}[2]{``\ref{#1} $\Rightarrow$ \ref{#2}''}
\theoremstyle{definition}
\newtheorem{definition}{Definition}[section]
\newtheorem{remark}[definition]{Remark}
\newtheorem{example}[definition]{Example}
\newtheorem{examples}[definition]{Examples}
\theoremstyle{plain}
\newtheorem{proposition}[definition]{Proposition}
\newtheorem{lemma}[definition]{Lemma}
\newtheorem{theorem}[definition]{Theorem}
\newtheorem{corollary}[definition]{Corollary}
\numberwithin{equation}{section}
\begin{document}


%
%
\title[Stability of positive semigroups]{Stability criteria for positive semigroups on ordered Banach spaces}

\author{Jochen Gl\"uck}
\address[J. Gl\"uck]{Bergische Universit\"at Wuppertal, Fakult\"at f\"ur Mathematik und Naturwissenschaften, Gaußstr.\ 20, 42119 Wuppertal, Germany}
\email{glueck@uni-wuppertal.de}

\author{Andrii Mironchenko}
\address[A. Mironchenko]{University of
  Klagenfurt, Department of Mathematics, Universit\"atsstraße 65-67, 9020 Klagenfurt, Austria}
\email{andrii.mironchenko@aau.at}

%
\subjclass[2010]{47B65, 47D06, 47A10, 37L15}

%
\keywords{positive systems, continuous-time systems, stability, small-gain condition, linear systems, semigroup theory, resolvent positive operator, Krein--Rutman theorem}
\date{\today}
\begin{abstract}
	We consider generators of positive $C_0$-semigroups and, more generally, 
	resolvent positive operators $A$ on ordered Banach spaces
	and seek for conditions ensuring the negativity of their spectral bound $\spb(A)$.
	Our main result characterizes $\spb(A) < 0$ in terms 
	of so-called \emph{small-gain conditions} 
	that describe the behaviour of $Ax$ for positive vectors $x$.
	This is new even in case that the underlying space is an $L^p$-space
	or a space of continuous functions.
	
	We also demonstrate that it becomes considerably easier to characterize
	the property $\spb(A) < 0$ 
	if the cone of the underlying Banach space has non-empty interior 
	or if the essential spectral bound of $A$ is negative.
	To treat the latter case, we discuss a counterpart of a Krein-Rutman theorem for resolvent positive operators.
	When $A$ is the generator of a positive $C_0$-semigroup,
	our results can be interpreted as stability results for the semigroup,
	and as such, they complement similar results recently proved for the discrete-time case.
	
	In the same vein, we prove a Collatz--Wielandt type formula and a logarithmic formula 
	for the spectral bound of generators of positive semigroups.
\end{abstract}

\maketitle

\section{Introduction} \label{section:introduction}

Recently in \cite{GlM21} it was shown that linear positive discrete-time evolution equations are exponentially stable if and only if the operator generating the evolution equation has a kind of a uniform no-increase property.  
This result has been already applied for the analysis of infinite networks \cite{MKG20, KMS21} and the construction of Lyapunov functions for composite systems \cite{KMS21}. 
Furthermore, it provided strong motivation for the analysis of discrete-time nonlinear monotone systems \cite{MKG20}. 
For finite-dimensional linear positive systems such criteria are well-known, see, e.g., \cite[Section~1]{Rue10} for the 
discrete-time case and \cite{Stern1982} for the continuous-time case.

Although there is a vast literature on the stability of strongly continuous semigroups -- we refer for instance to the classical monographs \cite{vanNeerven1996}, \cite[Chapter~V]{EngelNagel2000}, \cite{Emelyanov2007} and \cite{Eisner2010}, as well as to the survey article \cite{ChillTomilov2007} -- characterizations of stability of positive $C_0$-semigroups by means of no-increase properties have, to the best of our knowledge, only appeared in \cite{Koshkin2015} so far, under quite different assumptions than the ones that we consider in this paper.
We derive several results of this kind, which strongly extend the corresponding finite-dimensional results from \cite{Stern1982}.

A considerable part of the theory of positive $C_0$-semigroups is developed in the setting of Banach lattices. 
Yet, we formulate our results in the more general framework of ordered Banach spaces. 
On the one hand, this gives a wider range of applicability, for instance to positive semigroups that act on $C^*$-algebras 
or on Sobolev spaces. 
On the other hand, recent contributions in the theory of positive dynamical systems show that, 
even if one starts with a positive $C_0$-semigroup on a Banach lattice $X$, 
one typically has to leave the class of Banach lattices if one extends the order to the extrapolation space $X_{-1}$ 
-- an object which occurs, for instance, in the perturbation theory of positive semigroups 
\cite{BarbieriEngel2024Preprint, BatkaiJacobVoigtWintermayr2018} and in positive systems theory \cite{AroraGlueckPaunonenSchwenningerPreprint, Gantouh2023b, Gantouh2024}.

\subsection*{Stability of $C_0$-semigroups}

The most prototypical linear autonomous and con\-ti\-nu\-ous-time dynamical system is described by a $C_0$-semigroup -- which we denote by $(e^{tA})_{t \ge 0}$ or by $(T(t))_{t \ge 0}$ -- with the generator $A$ on a Banach space $X$. 
For an overview of the theory of $C_0$-semigroups, we refer for instance to the monographs \cite{EngelNagel2000, Pazy1983}.

We are interested in the question whether the $C_0$-semigroup converges to $0$ with respect to the operator norm as $t \to \infty$, i.e., whether $\norm{e^{tA}} \to 0$ as $t \to \infty$. In this case $\|e^{tA}\|\leq Me^{-at}$ for some $a,M>0$ and all $t\geq 0$, so the semigroup is said to be \emph{uniformly exponentially stable}. 
A necessary condition for the uniform exponential stability of a $C_0$-semigroup 
is negativity of the \emph{spectral bound} of the generator $A$, i.e., the condition
\begin{align}
	\label{eq:negativity-of-spb}
	\spb(A) := \sup\left\{ \re \lambda: \; \lambda \in \spec(A) \right\} < 0.
\end{align}
This condition is in general not sufficient for uniform exponentially stability.
On the other hand, for many important classes of semigroups, such as eventually norm-continuous semigroups, 
\eqref{eq:negativity-of-spb} is indeed equivalent to uniform exponential stability of the semigroup; 
see Subsection~\ref{sec:stability-of-semigroups} for more details.

\subsection*{Contributions}

In this paper, we present a variety of characterizations for the property $s(A)<0$ and related results under positivity assumptions.
For the sake of generality, we do not restrict ourselves to the infinitesimal generators of positive strongly continuous semigroups but prove the results for the more general class of \emph{resolvent positive operators}, originally introduced in \cite{Arendt1987}.

After some preparations in Section~\ref{section:basics}, 
we present in Section~\ref{section:stability} a general characterization of the negativity of the spectral bound 
of resolvent positive operators in ordered Banach spaces with a normal and generating cone.
Next, in Section~\ref{section:interior-points}, we derive several further characterizations 
in case if the cone has in addition non-empty interior. 
In Section~\ref{section:quasi-compact}, we discuss a Krein--Rutman type theorem for resolvent positive operators 
and use it to derive characterizations for the negativity of spectral bound for operators possessing negative essential spectral bound.
A Collatz--Wielandt formula for the generators of a class of positive $C_0$-semigroups is proved 
in Section~\ref{section:collatz-wielandt}, 
and Section~\ref{section:logarithmic} contains a number of explicit logarithmic formulas for the spectral bound
of generators of positive $C_0$-semigroups.
In Appendix~\ref{section:hilbert-non-positive}, we prove a new characterization of uniform exponential stability 
of general $C_0$-semigroups on Hilbert spaces (without any positivity assumption) 
and discuss how this is related to the main part of the paper.

Discrete-time counterparts of the results proved in 
Sections~\ref{section:stability}, \ref{section:interior-points}, and \ref{section:quasi-compact}
have been established in \cite{GlM21}. 
The differences in the formulations of the results are briefly explained in Section~\ref{section:time-continuous-discussion}.
Some of our arguments are related to the techniques used by Karlin in his classical study of positive operators \cite{Karlin1959}.

\subsection*{Notation and terminology}

We use the conventions $\N = \{1,2,3,\dots\}$ and $\Z_+ = \{0,1,2,\dots\}$.
The identity operator on a Banach space will be denoted by $\id$ (if the space is clear from the context). 
For subsets $A,B$ of a vector space $X$ we denote $A+B:=\{a+b:a\in A,\ b \in B\}$, $-B:=\{-b:b\in B\}$, and $A-B:=A+(-B)$.

\section{Ordered Banach spaces, positive semigroups and resolvent positive operators}
\label{section:basics}

\subsection{Ordered Banach spaces}
\label{subsection:ordered-banach-spaces}

By an \emph{ordered Banach space} we mean a pair $(X, X^+)$, where $X$ is a real Banach space and $X^+$ is a closed cone in $X$, i.e., a closed convex subset of $X$ such that $X^+ + X^+ \subseteq X^+$ and $X^+ \cap (-X^+) = \{0\}$. 
We call $X^+$ the \emph{positive cone} in $X$.
On every ordered Banach space there is a natural order relation $\le$ which is given by $x \le y$ if and only if $y-x \in X^+$. 
The elements of a positive cone are called \emph{positive} vectors.
The positive cone $X^+$ in an ordered Banach space $(X,X^+)$ is called \emph{total} if its linear span $X^+ - X^+$ is dense in $X$; the cone is called \emph{generating} if its linear span is even equal to the whole space $X$. In this case, it can be shown (see for instance \cite[Theorem~2.37]{AliprantisTourky2007}) that there even exists a real number $M > 0$ such that each $x \in X$ can be written as
\begin{align}
	\label{eq:bounded-decomposability-constant}
	x = y-z \quad \text{for vectors} \quad y,z \in X^+ \quad \text{that satisfy} \quad \norm{y}, \norm{z} \le M \norm{x}.
\end{align}
The positive cone $X^+$ in $X$ is called \emph{normal} if there exists a real number $C > 0$ such that
\begin{align*}
	\norm{x} \le C \norm{y}
\end{align*}
whenever $0 \le x \le y$ for vectors $x,y \in X$. For $x,z \in X$ the set $[x,z] := \{y \in X: \, x \le y \le z\}$ is called the \emph{order interval} between $x$ and $z$. It can be shown (see for instance \cite[Theorems~2.38 and~2.40]{AliprantisTourky2007}) that the following three assertions are equivalent:
\begin{enumerate}[label=(\roman*)]
	\item the positive cone is normal
	\item there exists a real number $C > 0$ such that
	\begin{align}
		\label{eq:normality-estimate}
		\norm{x} \le C \max\left\{ \norm{a}, \norm{b} \right\} \quad \text{for all } x,a,b \in X \text{ satisfying } x \in [a,b];
	\end{align}
	\item each order interval in $X$ is norm bounded.
\end{enumerate}

Concise lists of classical examples of ordered Banach spaces can, for instance, be found in \cite[Subsection~2.3]{ArendtNittka2009}, \cite[Subsection~2.3]{GlueckWeber2020} and \cite[Section~2]{GlM21}.

\subsection{Positive operators, Banach spaces and their duals}

Let $(X,X^+)$ be an ordered Banach space, and let $\calL(X)$ denote the space of all bounded linear operators on $X$. An operator $T \in \calL(X)$ is called \emph{positive} -- which we denote by $T \ge 0$ -- if $TX^+ \subseteq X^+$. 
Similarly, a bounded linear functional $x'$ -- i.e.\ an element of the dual space $X'$ -- is called \emph{positive} if $\langle x', x \rangle \ge 0$ for all $x \in X^+$; here, we used the common notation $\langle x', x \rangle := x'(x)$. 
If the cone $X^+$ is total, then the dual space $X'$ also becomes an ordered Banach space when endowed with the \emph{dual cone} $(X')^+$ that is defined to be the set of all positive bounded linear functionals on $X$. 
The dual cone $(X')^+$ is generating if and only if $X^+$ is normal \cite[Theorem~4.5]{KrasnoselskiiLifshitsSobolev1989}; 
and analogously, the dual cone $(X')^+$ is normal if and only if $X^+$ is generating \cite[Theorem~4.6]{KrasnoselskiiLifshitsSobolev1989}.

\subsection{Resolvent positive operators}

Each real Banach space $X$ has a \emph{complexification} which is a complex Banach space that is often denoted by $X_\C$ (in fact, there are many complexifications of $X$, but they are all isomorphic). For an overview about complexifications we refer, for instance, to \cite{MunozSarantopoulosTonge1999} and \cite[Appendix~C]{Glueck2016}. Complexifications are typically used to exploit spectral theoretic properties of linear operators that are a priori defined on real Banach spaces; a brief overview of this approach is, for instance, given in \cite[Section~C.3]{Glueck2016}. 

If $(X,X^+)$ is an ordered Banach space, we call a bounded linear operator $T$ on $X_\C$ \emph{positive} if it is the extension of a positive operator in $\calL(X)$ to $X_\C$; this is equivalent to saying that $T$ leaves $X$ invariant and its restriction to $X$ is positive.

Now, let $(X,X^+)$ be an ordered Banach space and let $A: X \supseteq \dom{A} \to X$ be a closed linear operator. 
Whenever we talk about spectral properties of $A$, we shall assume that $A$ has been extended to a (automatically closed) linear operator $A_\C$ on a complexification $X_\C$ of $X$ and by any spectral property of $A$ we tacitly refer to the corresponding spectral property of $A_\C$. 
In particular, by the \emph{spectrum} $\sigma(A)$ and the \emph{resolvent set} $\resSet(A)$ of $A$ 
we mean the spectrum $\sigma(A_\C)$ and the resolvent set $\resSet(A_\C)$ of $A_\C$. 
For every $\lambda \in \resSet(A)$ the \emph{resolvent} of $A$ at $\lambda$ is the operator 
$\Res(\lambda,A) := \Res(\lambda, A_\C) := (\lambda - A_\C)^{-1} \in \calL(X_\C)$.

An operator $A: X \supseteq \dom{A} \to X$ is called \emph{resolvent positive} if there exists a real number $\omega$ such that the interval $(\omega,\infty)$ is in the resolvent set of $A$ and the resolvent $\Res(\lambda,A)$ is positive for each $\lambda \in (\omega,\infty)$. 
A $C_0$-semigroup $(e^{tA})_{t\geq 0}$ is called positive if $e^{tA}$ is positive for all $t\geq 0$.

One reason why resolvent positive operators are important is that every generator of a positive $C_0$-semigroup is resolvent positive. 
For more information on positive $C_0$-semigroups we refer to the classical paper \cite{BattyRobinson1984} and, in the more specific context of Banach lattices, also to \cite{Nagel1986} and \cite{BatkaiKramarRhandi2017}. 

On the other hand, there also exist resolvent positive operators which are not generators of $C_0$-semigroups \cite[Section~3]{Arendt1987}.
Yet, even then resolvent positivity of an operator $A$ has consequences for the well-posedness of the Cauchy problem $\dot u(t) = Au(t)$: indeed, under mild assumptions on the space $X$, it follows that every resolvent positive operator generates a once integrated semigroup \cite[Theorem~3.11.7]{ArendtBattyHieberNeubrander2011}, which can be interpreted as a weak form of well-posedness of the Cauchy problem (compare \cite[Corollary~3.2.11]{ArendtBattyHieberNeubrander2011}).

Resolvent positive operators were introduced by Arendt in \cite{Arendt1987}, and are mostly studied on ordered Banach spaces with a generating and normal cone. However, their very definition also makes sense on general ordered Banach spaces, and in several sections below we will encounter situations where interesting results can be shown about resolvent positive operators if the cone is only assumed to be total. 

For a linear operator $A: X \supseteq \dom{A} \to X$ define the \emph{spectral bound} $\spb(A)$ by 
\begin{align*}
	\spb(A) := \sup\{\re \lambda: \, \lambda \in \spec(A) \} \in [-\infty,\infty].
\end{align*}
and the \emph{real spectral bound} by 
\begin{align*}
	\realspb(A) := \sup(\spec(A) \cap \R) \in [-\infty,\infty].
\end{align*}
Here we use the convention $\sup(\emptyset)=-\infty$. Clearly, we always have $\realspb(A) \leq s(A)$, and if $A$ is resolvent positive, then $\realspb(A) < \infty$. 
If $A$ generates a $C_0$-semigroup $(e^{tA})_{t \ge 0}$, the number 
\begin{align*}
	\omega(A) 
	:= 
	\inf\{\omega \in \R: \, \exists M \ge 1 \; \forall t \ge 0 \; \norm{e^{tA}} \le Me^{t\omega} \}
	\in [-\infty,\infty)
\end{align*}
is called the \emph{growth bound} of $A$ (or of the semigroup $(e^{tA})_{t \ge 0}$).

The following proposition contains several useful results about resolvent positive operators.

\begin{proposition}
	\label{prop:properties-of-resolvent-positive-operators}
	Let $(X,X^+)$ be an ordered Banach space and let $A: X \supseteq \dom{A} \to X$ be a resolvent positive operator on $X$.
	\begin{enumerate}[label=\upshape(\alph*)]
		\item\label{prop:properties-of-resolvent-positive-operators:itm:positive-and-decreasing}
		If $\realspb(A) < \lambda < \mu$, then $\Res(\lambda,A) \ge \Res(\mu,A) \ge 0$. 
		In other words, the resolvent is positive and decreasing on the interval $(\realspb(A), \infty)$ 
		and thus, in particular, on the interval $(\spb(A), \infty)$.
		
		\item\label{prop:properties-of-resolvent-positive-operators:itm:generating-cone}
		Assume that the cone $X^+$ is generating, and define $\dom{A}^+ := \dom{A} \cap X^+$. Then $\dom{A} = \dom{A}^+ - \dom{A}^+$.
	\end{enumerate}
	Assume now in addition that the cone $X^+$ is generating and normal.
	\begin{enumerate}[resume, label=\upshape(\alph*)]
		\item\label{prop:properties-of-resolvent-positive-operators:itm:spectral-bound}
		The spectral bound satisfies $\spb(A) < \infty$. 
		If $\spec(A) \not= \emptyset$, then $\spb(A)$ is a spectral value of $A$.
		
		\item\label{prop:properties-of-resolvent-positive-operators:itm:spectral-bound-equal-real-spectral-bound}
		One has $\spb(A) = \realspb(A)$.
			
		\item\label{prop:properties-of-resolvent-positive-operators:itm:not-positive-to-the-left} 
		If $\lambda \in \R$ is in the resolvent set of $A$ and $\lambda < \realspb(A) = \spb(A)$, then $\Res(\lambda,A)$ is not positive.
	\end{enumerate}
\end{proposition}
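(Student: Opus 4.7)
The plan is to prove (a) by a continuation argument based on the Neumann series and the resolvent identity; (b) directly from the generating property; (c) and (d) simultaneously via the Krein--Rutman theorem applied to $\Res(\lambda,A)$ combined with the spectral mapping theorem for resolvents; and (e) by the resolvent identity together with the normality of the cone.

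For (a), set $\omega_0 := \inf\{\omega' \in \R : (\omega', \infty) \subseteq \resSet(A) \text{ and } \Res(\mu,A) \ge 0 \text{ for all } \mu > \omega'\}$; this is $\le \omega$ and hence finite by resolvent positivity. The inclusion $(\omega_0,\infty) \subseteq \resSet(A)$ already forces $\omega_0 \ge \realspb(A)$. Conversely, if $\omega_0 > \realspb(A)$, then $\omega_0 \in \resSet(A)$ and $\Res(\omega_0,A) = \lim_{\mu \searrow \omega_0}\Res(\mu,A) \ge 0$ by closedness of the cone; the Neumann expansion $\Res(\mu,A) = \sum_{k \ge 0}(\omega_0-\mu)^k \Res(\omega_0,A)^{k+1}$ for $\mu \in (\omega_0-\varepsilon, \omega_0]$ extends positivity past $\omega_0$, contradicting minimality. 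Hence $\omega_0 = \realspb(A)$, giving positivity on $(\realspb(A),\infty)$, and monotonicity follows from $\Res(\lambda,A) - \Res(\mu,A) = (\mu-\lambda)\Res(\lambda,A)\Res(\mu,A) \ge 0$. For (b), pick $\lambda > \realspb(A)$, decompose $(\lambda I - A)x = u-v$ with $u,v \in X^+$ via the generating property, and apply $\Res(\lambda,A)$ to write $x = \Res(\lambda,A)u - \Res(\lambda,A)v$ as a difference in $\dom{A}\cap X^+$.

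For (c) and (d) I would fix $\lambda > \omega$ and use that $\Res(\lambda,A)$ is a bounded positive operator on an ordered Banach space with normal and generating cone. The classical Krein--Rutman theorem then gives $\spr(\Res(\lambda,A)) \in \spec(\Res(\lambda,A))$; by the spectral mapping $\spec(\Res(\lambda,A))\setminus\{0\} = \{(\lambda-\nu)^{-1} : \nu \in \spec(A)\}$ together with positivity of the spectral radius, one obtains $\spr(\Res(\lambda,A)) = (\lambda-\nu_0)^{-1}$ for some \emph{real} $\nu_0 \in \spec(A)$ with $\nu_0 < \lambda$. Since $\spec(A)\cap\R \subseteq (-\infty,\omega]$, the closest-point condition $\lambda - \nu_0 = \dist(\lambda,\spec(A))$ identifies $\nu_0$ with $\realspb(A)$. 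Then every $\nu = a+ib \in \spec(A)$ satisfies $(\lambda-a)^2 + b^2 \ge (\lambda-\realspb(A))^2$ for all $\lambda > \omega$, and letting $\lambda \to \infty$ forces $a \le \realspb(A)$. Hence $\spb(A) \le \realspb(A) \le \spb(A)$, so $\spb(A) = \realspb(A) = \nu_0 \in \spec(A)$, which gives (c) and (d) at once; the case $\spec(A) = \emptyset$ is trivial.

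For (e), assume for contradiction that $\lambda \in \resSet(A)\cap\R$ with $\lambda < \spb(A)$ and $\Res(\lambda,A) \ge 0$. For every $\mu > \spb(A)$ part (a) gives $\Res(\mu,A) \ge 0$, and the resolvent identity yields $0 \le \Res(\mu,A) \le \Res(\lambda,A)$ in the operator order. Combining normality of the cone with the bounded decomposition \eqref{eq:bounded-decomposability-constant} supplied by the generating property produces a constant $K$ with $\norm{\Res(\mu,A)} \le K \norm{\Res(\lambda,A)}$ uniformly for $\mu > \spb(A)$. But (c) gives $\spb(A) \in \spec(A)$, so $\norm{\Res(\mu,A)} \to \infty$ as $\mu \searrow \spb(A)$ --- otherwise a Neumann series at some $\mu$ sufficiently close to $\spb(A)$ would place $\spb(A)$ in $\resSet(A)$ --- contradicting the uniform bound. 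The main obstacle I foresee is the packaging of (c) and (d): correctly extracting a real spectral value from Krein--Rutman via spectral mapping, and then confining the full complex spectrum to $\{\re z \le \realspb(A)\}$ by the large-$\lambda$ comparison.
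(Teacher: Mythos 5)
Your argument is correct throughout, and for parts (a) and (b) it coincides with the paper's proof: the paper establishes positivity of the resolvent on $(\realspb(A),\infty)$ by showing that the set of parameters where positivity holds is non-empty, open and closed (open via the same Taylor/Neumann expansion you use), which is just the connectedness argument underlying your infimum $\omega_0$; part (b) is verbatim the same decomposition trick. The genuine divergence is in (c)--(e): the paper simply cites \cite[Proposition~3.11.2]{ArendtBattyHieberNeubrander2011} for (c) and (e) and deduces (d), whereas you reprove these facts. Your route --- apply to $\Res(\lambda,A)$ the theorem that a positive operator on a space with normal and generating cone has its spectral radius in its spectrum, transfer back via the spectral mapping theorem for resolvents to get a real spectral value $\nu_0=\realspb(A)$ with $\lambda-\nu_0=\dist(\lambda,\spec(A))$, and then squeeze the whole spectrum into the half-plane $\{\re z\le\realspb(A)\}$ by letting $\lambda\to\infty$ in $(\lambda-a)^2+b^2\ge(\lambda-\realspb(A))^2$ --- is essentially the standard proof behind the cited reference, so you have made the black box explicit; your proof of (e) via $0\le\Res(\mu,A)\le\Res(\lambda,A)$, normality, and the blow-up of $\norm{\Res(\mu,A)}$ as $\mu\downarrow\spb(A)$ is likewise sound and self-contained. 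What the paper's approach buys is brevity; what yours buys is a proof readable without external references, at the cost of importing the Pringsheim-type spectral radius theorem.

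One caveat on attribution rather than on correctness: the input you call ``the classical Krein--Rutman theorem'' is not the Krein--Rutman theorem in the sense used elsewhere in this paper (which requires an essential spectral radius gap and produces positive eigenvectors). What you actually need is the weaker statement that $\spr(T)\in\spec(T)$ for every positive bounded $T$ on an ordered Banach space with normal and generating cone --- exactly the fact the paper invokes in Section~\ref{section:time-continuous-discussion} via \cite[2.2 on p.\,311]{SchaeferWolff1999}. Cite that result instead and your argument is complete; note also that in (e) the hypothesis $\lambda<\realspb(A)$ already forces $\spec(A)\cap\R\neq\emptyset$, so your appeal to (c) for $\spb(A)\in\spec(A)$ is legitimate there.
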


\begin{proof}
	\ref{prop:properties-of-resolvent-positive-operators:itm:positive-and-decreasing}
	We first show that $\Res(\mu,A) \ge 0$ for all $\mu \in (\realspb(A),\infty)$. 
	To this end, let us define the set
	
	\begin{align*}
		I := \Big\{ \lambda \in (\realspb(A),\infty): \; \Res(\mu,A) \ge 0 \text{ for all } \mu \in [\lambda, \infty) \Big\}.
	\end{align*}
	Then $I$ is a non-empty subinterval of $(\realspb(A),\infty)$, and $I$ is closed in $(\realspb(A),\infty)$ by the continuity of the resolvent. It suffices to show that $I$ is also open, so let $\lambda \in I$.
	For all $\mu$ in a sufficiently small left neighbourhood of $\lambda$, we can represent $\Res(\mu,A)$ by means of the Taylor expansion
	\begin{align*}
		\Res(\mu,A) = \sum_{n=0}^\infty (\lambda - \mu)^n \Res(\lambda,A)^{n+1},
	\end{align*}
	so $\Res(\mu,A) \ge 0$ for those $\mu$. This proves that $I$ is indeed open, and hence equal to $(\realspb(A),\infty)$. In particular, the resolvent is positive on the latter interval.
	
	The fact that $\mu \mapsto \Res(\mu,A)$ is decreasing on $(\realspb(A),\infty)$ is now easy to see: the resolvent is analytic, and its derivative at any point $\mu \in (\realspb(A),\infty)$ is given by $-\Res(\mu,A)^2 \le 0$.
	
	\ref{prop:properties-of-resolvent-positive-operators:itm:generating-cone}
	Let $x \in \dom{A}$. Choose a real number $\lambda$ in the resolvent set of $A$ such that $\Res(\lambda,A) \ge 0$.  Since $X^+$ is generating in $X$, we can decompose the vector $(\lambda-A)x$ as $(\lambda-A)x = y-z$ for two vectors $y,z \in X^+$. Hence
	\begin{align*}
		x = \Res(\lambda,A)(\lambda-A)x = \Res(\lambda,A)y - \Res(\lambda,A)z \in \dom{A}^+ - \dom{A}^+,
	\end{align*}
	which proves the assertion.
	
	\ref{prop:properties-of-resolvent-positive-operators:itm:spectral-bound} and~\ref{prop:properties-of-resolvent-positive-operators:itm:not-positive-to-the-left}
	These results can, for instance, be found in \cite[Proposition~3.11.2]{ArendtBattyHieberNeubrander2011} (note that the assumption that the cone be generating is not explicitly mentioned there since the authors of \cite{ArendtBattyHieberNeubrander2011} define the notion \emph{ordered Banach space} in a way that the cone is always generating).
	
	\ref{prop:properties-of-resolvent-positive-operators:itm:spectral-bound-equal-real-spectral-bound} 
	This follows immediately from~\ref{prop:properties-of-resolvent-positive-operators:itm:spectral-bound}.
\end{proof}

The proof of Proposition~\ref{prop:properties-of-resolvent-positive-operators}%
\ref{prop:properties-of-resolvent-positive-operators:itm:positive-and-decreasing} 
is an adaptation of an argument from \cite[Proposition~4.2]{DanersGlueckKennedy2016}.
For the sake of easier reference, we explicitly restate parts~\ref{prop:properties-of-resolvent-positive-operators:itm:positive-and-decreasing} 
and~\ref{prop:properties-of-resolvent-positive-operators:itm:generating-cone} of the previous proposition for the case of positive $C_0$-semigroups:

\begin{corollary}
	\label{cor:properties-of-positive-sg-generators}
	Let $(X,X^+)$ be an ordered Banach space and let $A: X \supseteq \dom{A} \to X$ generate a positive $C_0$-semigroup on $X$. 
	Then $A$ is resolvent positive and hence the following assertions hold.
	\begin{enumerate}[label=\upshape(\alph*)]
		\item\label{cor:properties-of-positive-sg-generators:itm:positive-and-decreasing}
		The inequality $\Res(\lambda,A) \ge \Res(\mu,A) \ge 0$ holds for all real numbers $\lambda < \mu$ in the interval $(\realspb(A), \infty)$ 
		(and thus, in particular, in the interval $(\spb(A), \infty)$).
		
		\item\label{cor:properties-of-positive-sg-generators:itm:generating-cone}
		If $X^+$ is generating, then $\dom{A}^+$ is generating in $\dom{A}$, i.e., $\dom{A} = \dom{A}^+ - \dom{A}^+$.
	\end{enumerate}
\end{corollary}

\begin{proof}
	If $\lambda \in \R$ is located on the right of the growth bound $\omega(A)$, 
	then $\Res(\lambda,A)$ can be represented as the Laplace transform of the semigroup and is hence positive. 
	Therefore, $A$ is resolvent positive. 
	Hence, the claims~\ref{cor:properties-of-positive-sg-generators:itm:positive-and-decreasing} 
	and~\ref{cor:properties-of-positive-sg-generators:itm:generating-cone}
	follow from Proposition~\ref{prop:properties-of-resolvent-positive-operators}.
\end{proof}

To the best of our knowledge, positivity of the resolvent on the right of the spectral bound has only been shown 
in the literature so far if the cone $X_+$ is normal and generating: 
in this case one has $\spb(A) = \realspb(A)$ and one does not need the connectedness argument from the proof of 
Proposition~\ref{prop:properties-of-resolvent-positive-operators}\ref{prop:properties-of-resolvent-positive-operators:itm:positive-and-decreasing}
since the Laplace transform representation of the resolvent converges (as an improper Riemann integral) 
on the right of $\spb(A)$ rather than only on the right of the growth bound \cite[Theorem~2.4.2(2)]{BattyRobinson1984}.

We close this subsection with two examples which show that 
Proposition~\ref{prop:properties-of-resolvent-positive-operators}%
\ref{prop:properties-of-resolvent-positive-operators:itm:spectral-bound}--%
\ref{prop:properties-of-resolvent-positive-operators:itm:not-positive-to-the-left}
does not remain true, in general, if one drops the assumption that the cone $X^+$ is normal.
The examples are adaptions of a classical example of Bonsall \cite[Example~(iv) on pp.\,57--58]{Bonsall1958} 
which shows that the spectral radius of a positive operator need not be a spectral value if the cone is not normal.

\begin{examples}
	\label{exa:bonsall}
	Let $\Omega \subseteq \C$ be a bounded domain, which is symmetric with respect to reflection at the real axis 
	and let $\A(\Omega)$ denote the space of all continuous functions $\overline{\Omega} \to \C$ 
	which are holomorphic inside $\Omega$. 
	This is a complex Banach space with respect to the sup norm. 
	Let $X \subseteq \A(\Omega)$ denote the subset of those functions that are real-valued on $\Omega \cap \R$. 
	Then $X$ is a real Banach space with respect to the sup norm and $\A(\Omega)$ is the complexification of $X$. 
	Indeed, for every $f \in \A(\Omega)$ the function $f^*$ given by $f^*(z) := \overline{f(\overline{z})}$ 
	for all $z \in \overline{\Omega}$ is well-defined due to the symmetry of $\Omega$ and belongs to $\A(\Omega)$, 
	and we have $f = g_1 + i g_2$, where the functions $g_1 := \frac{1}{2}(f+f^*)$ and $g_2 := \frac{1}{2i}(f-f^*)$ are in $X$. 
	Moreover, the intersection $X \cap iX$ is $\{0\}$ by the identity theorem for holomorphic functions. 
	
	Now let $S \subseteq \Omega \cap \R$ be a set which has an accumulation point in $\Omega$ 
	and let $X^+$ consist of those functions in $X$ which map $S$ into $[0,\infty)$. 
	This is a closed cone in $X$ by the identity theorem for holomorphic functions, so $(X,X^+)$ is an ordered Banach space.
	
	It follows from $\one_{\overline{\Omega}} \in X$ that the cone $X^+$ is generating 
	(and even has an interior point, see Lemma~\ref{lem:properties-interior-points}). 
	However, $X^+$ is not normal. 
	To see this, choose a point $z_0 \in \Omega \cap \R$ and define functions $f_n \in X$ by 
	$f_n(z) = \sin\big(n(z-z_0)\big) + 1$ for each $z \in \overline{\Omega}$ and each integer $n \ge 1$.
	Then $0 \le f_n \le 2 \one_{\overline{\Omega}}$ for each $n$, 
	but since $\sin$ is unbounded on $\C$ one has $\sup_n \norm{f_n}_\infty = \infty$.
	
	Now let the operator $A_\C \in \calL(\A(\Omega))$ be given by $(A_\C f)(z) = zf(z)$ for all $f \in \A(\Omega)$ 
	and all $z \in \overline{\Omega}$. 
	Then $A_\C$ leaves $X$ invariant and is the complex extension of the operator $A := A_\C|_{X}$. 
	One can easily check that the spectrum of $A_\C$ -- and hence of $A$ -- equals $\overline{\Omega}$. 
	Moreover, $A$ is positive if and only if $S \subseteq [0,\infty)$.
	
	Let us now consider two more specific situations:
	\begin{enumerate}[label=(\alph*)]
		\item 
		Let $\Omega$ be a subset of the right half plane which satisfies 
		\begin{align*}
			\sup (\Omega \cap \R) < \sup \{\re z: \, z \in \Omega\}.
		\end{align*}
		Then $A$ is positive since $S \subseteq \Omega \cap \R$ is contained in $(0,\infty)$ and 
		hence $A$ is resolvent positive as a consequence of the Neumann series representation of the resolvent. 
		One has $\realspb(A) = \sup (\Omega \cap \R)$ 
		and $\spb(A) = \sup \{\re z: \, z \in \Omega\}$. 
		So $\realspb(A) < \spb(A)$ and $\spb(A)$ is not a spectral value of $A$. 
		This shows that Proposition~\ref{prop:properties-of-resolvent-positive-operators}%
		\ref{prop:properties-of-resolvent-positive-operators:itm:spectral-bound} 
		and~\ref{prop:properties-of-resolvent-positive-operators:itm:spectral-bound-equal-real-spectral-bound} 
		does not remain true, in general, if $X^+$ is not normal.
		
		\item 
		Let $\Omega := \{z \in \C: \, 1 < \modulus{z - 2} < 2\}$ and choose $S := (0,1)$. 
		Then $A$ is positive and hence resolvent positive. 
		Moreover, the number $\spb(A) = \realspb(A) = 4$ is a spectral value of $A$. 
		At the number $2 \in \resSet(A)$ the resolvent of $A$ is given by 
		$\Res(2,A)f(z) = \frac{1}{2-z}f(z)$ for all $f \in \A(\Omega)$ and all $z \in \overline{\Omega}$. 
		Since $\frac{1}{2-z} > 0$ for all $z \in S = (0,1)$, it follows that $\Res(2,A)$ is positive 
		despite $2 < \realspb(A) = \spb(A)$. 
		This shows that 
		Proposition~\ref{prop:properties-of-resolvent-positive-operators}%
		\ref{prop:properties-of-resolvent-positive-operators:itm:not-positive-to-the-left} 
		does not hold, in general, if the cone is not normal.
	\end{enumerate}
\end{examples}

\subsection{Stability of $C_0$-semigroups}
\label{sec:stability-of-semigroups}

The main purpose of this article is to characterise for resolvent positive operators $A$,
in a variety of situations, the property $\spb(A) < 0$. 
The motivation for studying this property is as follows:

Let us first consider a $C_0$-semigroup $(e^{tA})_{t\geq 0}$ with the generator $A$ on a Banach space $X$. 
An important stability property of semigroups is the uniform exponential stability: a semigroup $(e^{tA})_{t\geq 0}$
is called \emph{uniformly exponentially stable}, if there are numbers $M,a>0$, such that 
\[
	\norm{e^{tA}} \leq Me^{-at} \quad \text{for all }  t \geq 0. 
\]
It is not hard to see that uniform exponential stability of the semigroup implies that $\spb(A)<0$; see for instance \cite[Proposition 1.2.1]{vanNeerven1996}. At the same time, the converse implication does not hold in general, even for $C_0$-groups on Hilbert spaces, see \cite[Example 1.2.4]{vanNeerven1996} or \cite[Section~IV.3]{EngelNagel2000}. Even if $\spb(A) = -\infty$, the semigroup does not need to be uniformly exponentially stable \cite[Exercise 4.13]{CuZ20}. This behaviour is due to a failure of the spectral mapping theorem for general $C_0$-semigroups; for a detailed explanation of this, we refer for instance to \cite[Section~IV.3]{EngelNagel2000}.

On the other hand, for many classes of semigroups the spectral bound $\spb(A)$ is equal to the growth bound $\omega(A)$ of the semigroup. 
If this is the case, one says that $T$ satisfies the \emph{spectrum determined growth property} \cite[p. 161]{CuZ20}. 
A notable class of $C_0$-semigroups that satisfy the spectrum determined growth assumption are eventually norm-continuous semigroups \cite[Theorem 1.10, p. 302]{EngelNagel2000}, and thus, as a special case, analytic semigroups. 
For a very general condition that implies equality of the spectral bound and the growth bound, we refer to \cite[Corollary~1.4(i)]{MartinezMazon1996}, where semigroups that are \emph{norm continuous at infinity} are studied.

\subsection{Stability of positive $C_0$-semigroups}
\label{section:stability-comments}

Positivity of semigroups enters the game due to the following two reasons:

\begin{enumerate}[label=(\arabic*)]
	\item 
	On many important spaces, \emph{every} positive semigroup has the spectrum determined growth property.
	This is, for instance, true on $L^p$-spaces for $p \in [1,\infty)$ 
	(see \cite{Weis1995} or \cite[Theorem~1]{Weis1998} or, for a strong simplification of the proof, 
	the recent article \cite{Vogt2022}),
	and also on many spaces of continuous functions
	(see for instance \cite[Theorem~B-IV-1.4]{Nagel1986} or, 
	for a more recent and simpler argument, \cite[Theorem~1]{AroraGlueck2022}). 
	See also Subsection~\ref{section:s_A_equals_omega_A} below.
	
	\item 
	Positivity provides us with ways to show the property $\spb(A) < 0$ 
	without explicitly computing or estimating the spectrum of $A$.	
\end{enumerate}

The focus of the present article is on point~(2).
Our results apply to a slightly more general situation than described above.
We do not only focus on the case where $A$ generates a positive $C_0$-semigroup. 
Instead, we consider the more general situation where $A$ is a resolvent positive operator. 
One advantage of this approach is that resolvent positive operators
are closely related to \emph{integrated semigroups}, a more general concept than $C_0$-semigroups;
just as for the $C_0$-semigroup case, stability properties of integrated semigroups
are strongly tied to the spectrum of their generator
(see for instance \cite[Theorem~6.1]{ElMennaoui1994}), 
which is why the inequality $\spb(A) < 0$ is also of interest for operators $A$
that do not generate $C_0$-semigroups.

\section{Spectral stability of resolvent positive operators}
\label{section:stability}

In this section, we characterize the condition $s(A)<0$ for resolvent positive operators acting on ordered Banach spaces
with generating and normal cones. 
In view of the preceding discussion, this is relevant in order to establish uniform exponential stability of linear systems.
Characterizations under different assumptions on the cone will be given in the subsequent sections.

\subsection{A characterization of spectral stability}

For a subset $S$ and a vector $x$ in a Banach space $X$, we denote by
\begin{align*}
	\dist(x,S) := \inf\left\{ \norm{x-y}: \, y \in S \right\}
\end{align*}
the \emph{distance} from $x$ to $S$. 
We proceed to our first main result:

\begin{theorem} 
	\label{thm:stability-for-pos-sg}
	Let $(X,X^+)$ be an ordered Banach space with generating and normal cone and let $A: X \supseteq \dom{A} \to X$ be a resolvent positive operator. Then the following assertions are equivalent:
	\begin{enumerate}[label=\upshape(\roman*)]
		\item\label{thm:stability-for-pos-sg:itm:stability} 
		\emph{Spectral stability:}
		The spectral bound of $A$ satisfies $\spb(A) = \realspb(A) < 0$.

		\item\label{thm:stability-for-pos-sg:itm:pos-resolvent} 
		\emph{Positive resolvent at $0$:}
		The operator $A: \dom{A} \to X$ is bijective and  
		$-A^{-1} = \Res(0,A)$ is positive.
		
		\item\label{thm:stability-for-pos-sg:itm:mbi} 
		\emph{Monotone bounded invertibility property:}
		There exists a number $c > 0$ such that
		\begin{align*}
			-Ax \le y \qquad \Rightarrow \qquad \norm{x} \le c \norm{y}
		\end{align*}
		for all $0 \le x \in \dom{A}$ and all $0 \le y \in X$.
		
		\item\label{thm:stability-for-pos-sg:itm:uniform-small-gain}
		\emph{Uniform small-gain condition:}
		There exists a number $\eta > 0$ such that
		\begin{align}
		\label{eq:uniform-SGC}
			\dist(Ax,X^+) \ge \eta \norm{x} \quad \text{for all } 0 \le x \in\dom{A}.
		\end{align}
		
		\item\label{thm:stability-for-pos-sg:itm:perturbed-small-gain} 
		\emph{Robust small-gain condition:}
		There exists a number $\varepsilon > 0$ such that
		\begin{align*}
			(A+P)x \not\ge 0
		\end{align*}
		whenever $x$ is a positive non-zero vector in $\dom{A}$ and $P: X \to X$ is a positive linear operator of norm $\norm{P} \le \varepsilon$.
		
		\item\label{thm:stability-for-pos-sg:itm:perturbed-rank-1-small-gain}
		\emph{Rank-$1$ robust small-gain condition:}
		There exists a number $\varepsilon > 0$ such that 
		\begin{align*}
			(A+P)x \not\ge 0
		\end{align*}
		whenever $x$ is a positive non-zero vector in $\dom{A}$ and $P: X \to X$ is a positive linear operator of norm $\norm{P} \le \varepsilon$ and of rank $1$.
	\end{enumerate}
\end{theorem}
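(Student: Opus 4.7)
The plan is to run the cyclic chain $(i) \Rightarrow (ii) \Rightarrow (iii) \Leftrightarrow (iv) \Rightarrow (v) \Rightarrow (vi) \Rightarrow (i)$, relying on Proposition~\ref{prop:properties-of-resolvent-positive-operators} for the spectral bookkeeping and exploiting the generating/normal structure of $X^+$ (together with its dual cone) at the non-trivial steps.

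The easy half is almost purely formal. For $(i) \Leftrightarrow (ii)$, negativity of $\spb(A)$ places $0$ strictly to the right of $\realspb(A)$, so part~(a) of Proposition~\ref{prop:properties-of-resolvent-positive-operators} yields $\Res(0,A) \ge 0$; conversely $\Res(0,A) \ge 0$ combined with part~(e) forces $\realspb(A) \le 0$, and $0 \in \resSet(A)$ excludes equality. For $(ii) \Rightarrow (iii)$, apply $-A^{-1} \ge 0$ to $-Ax \le y$ to obtain $0 \le x \le -A^{-1}y$, and conclude $\|x\| \le C \|A^{-1}\|\,\|y\|$ by normality. The equivalence $(iii) \Leftrightarrow (iv)$ reduces to the identity $\dist(Ax, X^+) = \inf\{\|y\| : y \ge -Ax\}$; the only subtlety is that (iii) restricts to $y \in X^+$, but a generating-cone decomposition $y = y_1 - y_2$ furnishes a positive $y_1 \ge y \ge -Ax$ of comparable norm, to which (iii) applies. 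The step $(iii) \Rightarrow (v)$ is a one-line contraction: $(A+P)x \ge 0$ gives $-Ax \le Px \in X^+$, hence $\|x\| \le c \varepsilon \|x\|$, so $\varepsilon < 1/c$ forces $x = 0$. Finally $(v) \Rightarrow (vi)$ is the trivial restriction to rank-one perturbations once one notes that for $x \in X^+$ the inequality $(A+P)x \ge x$ entails $(A+P)x \ge 0$.

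The genuine work lies in $(vi) \Rightarrow (i)$, which I would prove by contrapositive. Assume $\spb(A) \ge 0$ and fix an arbitrary $\varepsilon > 0$; the goal is to manufacture a nonzero $x \in X^+ \cap \dom A$ and a positive rank-one $P$ with $\|P\| \le \varepsilon$ contradicting the small-gain condition. If $\spec(A) = \emptyset$ then $\spb(A) = -\infty$, so the hypothesis fails; otherwise Proposition~\ref{prop:properties-of-resolvent-positive-operators}(c),(d) places $\spb(A) = \realspb(A)$ in $\spec(A) \cap \R$, hence on the boundary of $\resSet(A)$, so $\|\Res(\lambda, A)\| \to \infty$ as $\lambda \downarrow \spb(A)$ along the reals. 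Since $X^+$ is generating, this blow-up is witnessed on positive inputs: there exist $\lambda_n \downarrow \spb(A)$ and $y_n \in X^+$ of unit norm such that $x_n := \Res(\lambda_n, A) y_n \in X^+$ satisfies $\|x_n\| \to \infty$ and $A x_n = \lambda_n x_n - y_n$. The generating/normal duality between $X^+$ and $(X')^+$, together with Hahn--Banach, provides positive functionals $\phi_n \in (X')^+$ with $\phi_n(x_n) = 1$ and $\|\phi_n\|$ of order $1/\|x_n\|$. The rank-one positive operator $P_n z := \phi_n(z)\, y_n$ then has $\|P_n\| \to 0$ and $(A + P_n)x_n = \lambda_n x_n - y_n + y_n = \lambda_n x_n \ge 0$ since $\lambda_n > \spb(A) \ge 0$, contradicting the rank-one small-gain condition.

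The principal obstacle is to arrange, simultaneously, the resolvent blow-up on positive inputs and a positive separating functional of sharply controlled norm. Both rest essentially on $X^+$ being generating and normal so that $(X')^+$ inherits both properties, and without these structural assumptions the rank-one construction would not produce a perturbation of arbitrarily small norm.
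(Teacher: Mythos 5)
Your proposal follows essentially the same route as the paper: the same cyclic chain, the same generating-cone decomposition to pass between the monotone bounded invertibility property and the uniform small-gain condition, and the same key construction for \Implies{thm:stability-for-pos-sg:itm:perturbed-rank-1-small-gain}{thm:stability-for-pos-sg:itm:stability} (resolvent blow-up along $\lambda_n \downarrow \spb(A)$ witnessed on positive vectors, a positive norming functional of norm $O(1/\norm{x_n})$ obtained from normality of $X^+$ via the generating dual cone, and the rank-one perturbation $P_n = \phi_n(\argument)\,y_n$). Your bookkeeping via the exact identity $Ax_n = \lambda_n x_n - y_n$ is in fact marginally cleaner than the paper's detour through a normalized approximate eigenvector and a second decomposition of $(A-\spb(A))x_{n_0}$.

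One point deserves attention. Your final contradiction is $(A+P_n)x_n = \lambda_n x_n \ge 0$, which refutes the condition ``$(A+P)x \not\ge 0$'' but \emph{not} the condition ``$(A+P)x \not\ge x$'' as item \ref{thm:stability-for-pos-sg:itm:perturbed-rank-1-small-gain} is literally worded: since $\lambda_n \downarrow \spb(A)$ and one only knows $\spb(A) \ge 0$, there is no reason to have $\lambda_n x_n \ge x_n$. The paper's own proof of this implication has exactly the same feature, and with the literal wording item \ref{thm:stability-for-pos-sg:itm:perturbed-rank-1-small-gain} is in fact not equivalent to \ref{thm:stability-for-pos-sg:itm:stability}: take $A = 0$ on $\R$ with the standard cone; then every admissible perturbation acts as multiplication by some $p \in [0,\varepsilon]$ with $\varepsilon < 1$, so $(A+P)x = px \not\ge x$ for all $x > 0$, yet $\spb(A) = 0$. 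So this is a defect of the statement --- presumably ``$\not\ge x$'' is a relic of the discrete-time condition $(T+P)x \not\ge x$, whose continuous-time analogue is $(A+P)x \not\ge 0$ --- rather than of your argument; against the corrected wording your proof is complete and coincides with the paper's.
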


For some background information about the terminology \emph{small-gain condition}
we refer to Subsection~\ref{subsection:finite-dim}.

Let $A: X \supseteq \dom{A} \to X$ denote a closed linear operator. 
If $\lambda$ is a scalar and $(x_n) \subseteq \dom{A}$ is a sequence of vectors 
such that $\norm{x_n}_X = 1$ for each $n$ and 
\begin{align*}
	(\lambda - A)x_n \to 0 \qquad \text{in } X,
\end{align*}
then $\lambda$ is called an \emph{approximate eigenvalue} of $A$, 
and $(x_n)$ is called an approximate eigenvector associated to $\lambda$. 
Every approximate eigenvalue of $A$ is in the spectrum of $A$.

To prove Theorem~\ref{thm:stability-for-pos-sg}, we need the following lemma. 
\begin{lemma}
	\label{lem:positive-approximate-eigenvector}
	Let $(X,X^+)$ be an ordered Banach space with generating cone and let $A: X \supseteq \dom{A} \to X$ be a resolvent positive operator. Assume that $A$ has at least one spectral value in $\R$.
	
	Then $\realspb(A)$ is an approximate eigenvalue of $A$, and there exists an associated approximate eigenvector that consists of positive vectors. 
\end{lemma}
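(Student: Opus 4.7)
The plan is to approach $\realspb(A)$ from the right within the resolvent set, force the resolvent norm to blow up there, and then exploit the generating cone to replace the resulting approximate eigenvectors by positive ones. First I would verify that $\realspb(A)\in\spec(A)$ and that $(\realspb(A),\infty)\subseteq\resSet(A)$: by hypothesis $\spec(A)\cap\R$ is non-empty, it is closed in $\R$ (since $\spec(A)$ is closed in $\C$), and resolvent positivity gives $\realspb(A)<\infty$, so this closed, non-empty, bounded-above subset of $\R$ contains its supremum. Together with part~\ref{prop:properties-of-resolvent-positive-operators:itm:positive-and-decreasing} of Proposition~\ref{prop:properties-of-resolvent-positive-operators}, this yields in addition that $\Res(\lambda,A)\ge 0$ for every $\lambda>\realspb(A)$.

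Next I would show that $\norm{\Res(\lambda,A)}\to\infty$ as $\lambda\downarrow\realspb(A)$. If not, one could find $\lambda_n\downarrow\realspb(A)$ with $\norm{\Res(\lambda_n,A)}\le M$; choosing $n$ large enough that $|\lambda_n-\realspb(A)|<1/M$, the Neumann-series expansion
\[
	\Res(\mu,A)=\sum_{k=0}^{\infty}(\lambda_n-\mu)^{k}\,\Res(\lambda_n,A)^{k+1}
\]
would converge at $\mu=\realspb(A)$, placing $\realspb(A)$ in $\resSet(A)$ and contradicting the previous step. Now pick unit vectors $u_n\in X$ with $\norm{\Res(\lambda_n,A)u_n}\ge \tfrac12\norm{\Res(\lambda_n,A)}$. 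Using the bounded decomposability~\eqref{eq:bounded-decomposability-constant} available because the cone is generating, write $u_n=y_n-z_n$ with $y_n,z_n\in X^+$ and $\norm{y_n},\norm{z_n}\le M'$ for a constant $M'$ depending only on $X$. Since $\norm{\Res(\lambda_n,A)u_n}\le\norm{\Res(\lambda_n,A)y_n}+\norm{\Res(\lambda_n,A)z_n}$, after passing to a subsequence one of those positive sequences — call it $(v_n)\subseteq X^+$ — stays bounded by $M'$ while $\norm{\Res(\lambda_n,A)v_n}\to\infty$.

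Finally I would set $x_n:=\Res(\lambda_n,A)v_n/\norm{\Res(\lambda_n,A)v_n}$. By positivity of the resolvent each $x_n$ lies in $\dom{A}\cap X^+$ and has unit norm, and a direct computation gives
\[
	(\lambda_n-A)x_n=\frac{v_n}{\norm{\Res(\lambda_n,A)v_n}}\longrightarrow 0,
\]
so that $(\realspb(A)-A)x_n=(\realspb(A)-\lambda_n)x_n+(\lambda_n-A)x_n\to 0$ as well, exhibiting $\realspb(A)$ as an approximate eigenvalue with positive approximate eigenvectors. The main obstacle is the transfer performed just before this step: a priori the blow-up of $\norm{\Res(\lambda_n,A)}$ could be witnessed only by test vectors with large negative part, and the bounded decomposability of the generating cone is exactly what is needed to rule this out. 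It is also the only place in the argument where more than totality of the cone is used.
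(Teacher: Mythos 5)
Your proposal is correct and follows essentially the same route as the paper's proof: approach $\realspb(A)$ from the right, use the blow-up of the resolvent norm together with the bounded decomposition~\eqref{eq:bounded-decomposability-constant} to produce a bounded sequence of positive vectors $v_n$ with $\norm{\Res(\lambda_n,A)v_n}\to\infty$, and normalize $\Res(\lambda_n,A)v_n$ to get the positive approximate eigenvectors. The only difference is that you spell out the Neumann-series argument for why the resolvent norm must blow up, a standard fact the paper simply asserts.
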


The proof is a simple adaptation of \cite[Proposition~IV.1.10]{EngelNagel2000}. In \cite[Lemma~3.4]{GlM21} we gave a similar argument to obtain the lemma for the special case of bounded positive operators (note that normality of the cone that is assumed in this reference, is only needed there to ensure that the spectral radius is in the spectrum). Still, we include the details here to be more self-contained.

\begin{proof}[Proof of Lemma~\ref{lem:positive-approximate-eigenvector}]
	First note that $\realspb(A)$ is a spectral value of $A$ since the spectrum is closed. Now, choose a sequence $(s_n)$ of real numbers such that $s_n \downarrow \realspb(A)$; then we have $\norm{\Res(s_n,A)} \to \infty$. Hence, we can find a sequence of normalized vectors $w_n \in X$ such that $\norm{\Res(s_n,A)w_n} \to \infty$. 
	
	Now we use that the cone is generating: by decomposing the $w_n$ into a difference of positive and uniformly bounded vectors (see~\eqref{eq:bounded-decomposability-constant}), we can even find a bounded sequence $(v_n) \subseteq X^+$ such that $\beta_n := \norm{\Res(s_n,A)v_n} \to \infty$. By dropping finitely many elements of our sequence if necessary, we may assume that $\beta_n > 0$ for each $n$. 
	We now obtain the desired approximate eigenvector $(x_n)$ by setting
	\begin{align*}
		x_n := \frac{1}{\beta_n} \Res(s_n,A)v_n
	\end{align*}
	for each $n$. Clearly, each $x_n$ is normalized, and it is also in the cone $X^+$ since $\Res(s_n,A)$ is positive according to Proposition~\ref{prop:properties-of-resolvent-positive-operators}\ref{prop:properties-of-resolvent-positive-operators:itm:positive-and-decreasing}. Finally, $(x_n)$ is indeed an approximate eigenvector for $\realspb(A)$ since we have
	\begin{align*}
		\big(\realspb(A) - A\big)x_n = \big(\realspb(A) - s_n\big)x_n + \frac{1}{\beta_n} v_n \to 0
	\end{align*}
	as $n \to \infty$; 
	here we used that $\beta_n \to \infty$ and that the sequence $(v_n)$ is bounded.
\end{proof}

Now we can show Theorem~\ref{thm:stability-for-pos-sg}:

\begin{proof}[Proof of Theorem~\ref{thm:stability-for-pos-sg}]
	\Implies{thm:stability-for-pos-sg:itm:stability}{thm:stability-for-pos-sg:itm:pos-resolvent}
	By Proposition~\ref{prop:properties-of-resolvent-positive-operators}\ref{prop:properties-of-resolvent-positive-operators:itm:spectral-bound-equal-real-spectral-bound} one has $\realspb(A) = \spb(A)$.
	Thus, Proposition~\ref{prop:properties-of-resolvent-positive-operators}\ref{prop:properties-of-resolvent-positive-operators:itm:positive-and-decreasing} shows the claim.
	
	\Implies{thm:stability-for-pos-sg:itm:pos-resolvent}{thm:stability-for-pos-sg:itm:mbi}
	If $-Ax \le y$ for $0 \le x \in \dom{A}$ and $0 \le y \in X$, then it follows from the positivity of $\Res(0,A)$ that $0 \le x \le \Res(0,A)y$. So the monotone bounded invertibility property follows from the boundedness of $\Res(0,A)$ and the normality of the cone.
	
	\Implies{thm:stability-for-pos-sg:itm:mbi}{thm:stability-for-pos-sg:itm:uniform-small-gain}
	Let $0 \le x \in \dom{A}$ and let $\varepsilon > 0$. There exists a vector $z \in X^+$ such that
	\begin{align*}
		\dist(Ax, X^+) + \varepsilon \ge \norm{Ax - z}.
	\end{align*}
	We can decompose the vector $Ax - z$ as $Ax - z = u - v$ for positive vectors $u,v$ that satisfy the norm estimate
	\begin{align*}
		\norm{u}, \norm{v} \le M \norm{Ax-z} \le M \dist(Ax, X^+) + M \varepsilon;
	\end{align*}
	here, $M > 0$ is the constant from~\eqref{eq:bounded-decomposability-constant}. Now we can estimate the vector $-Ax$ as
	\begin{align*}
		-Ax = v-u-z \le v,
	\end{align*}
	so the monotone bounded invertibility property implies that
	\begin{align*}
		\norm{x} \le c \norm{v} \le cM \dist(Ax,X^+) + cM \varepsilon.
	\end{align*}
	Since $\varepsilon$ was arbitrary, this implies the uniform small-gain condition with constant $\eta = 1/(cM)$.

	\Implies{thm:stability-for-pos-sg:itm:uniform-small-gain}{thm:stability-for-pos-sg:itm:perturbed-small-gain}
	Set $\varepsilon = \eta/2$. Let $P: X \to X$ be a positive linear operator of norm at most $\varepsilon$ and let $x \in X^+$ be a non-zero vector. Then the distance between $Ax$ and $(A+P)x$ is given by
	\begin{align*}
		\norm{Ax - (A+P)x} = \norm{Px} \le \varepsilon \norm{x} < \eta \norm{x},
	\end{align*}
	where we used $x \not= 0$ for the strict inequality at the end. Hence, $(A+P)x$ cannot be positive due to the uniform small-gain condition.
	
	\Implies{thm:stability-for-pos-sg:itm:perturbed-small-gain}{thm:stability-for-pos-sg:itm:perturbed-rank-1-small-gain}
	This implication is obvious.
	
	\Implies{thm:stability-for-pos-sg:itm:perturbed-rank-1-small-gain}{thm:stability-for-pos-sg:itm:stability}
	Let $\varepsilon > 0$ be as in~\ref{thm:stability-for-pos-sg:itm:perturbed-rank-1-small-gain} and assume towards a contradiction that $\spb(A) \ge 0$. 
	Since the cone $X^+$ is generating and normal, $\spb(A)$ is a spectral value of $A$ (see Proposition~\ref{prop:properties-of-resolvent-positive-operators}\ref{prop:properties-of-resolvent-positive-operators:itm:spectral-bound}) and thus coincides with $\realspb(A)$. 
	Hence, Lemma~\ref{lem:positive-approximate-eigenvector} yields that $\spb(A)$ is an approximate eigenvalue of $A$ and that there exists a corresponding approximate eigenvector $(x_n)$ that consists of vectors $0 \le x_n \in \dom{A}$ such that $\norm{x_n} = 1$ and $(\spb(A) - A)x_n \to 0$ as $n \to \infty$. 
	For each $n$, we find vectors $y_n, z_n \in X^+$ such that
	\begin{align*}
		(A - \spb(A))x_n = y_n - z_n,
	\end{align*}
	and according to~\eqref{eq:bounded-decomposability-constant}, we can choose these vectors such that $y_n, z_n \to 0$ as $n \to \infty$. 
	Now let $M' > 0$ be the constant from~\eqref{eq:bounded-decomposability-constant}, but for the dual space $X'$; 
	this constant exists since the cone in $X^+$ is normal, so the dual cone is generating \cite[Theorem~4.5]{KrasnoselskiiLifshitsSobolev1989}.
	As $z_n \to 0$, there exists an index $n_0$ such that $M'\norm{z_{n_0}} \le \varepsilon$.
	
	It is not difficult to see that there exists a functional $0 \le z' \in X'$ of norm at most $M'$ such that $\langle z', x_{n_0} \rangle \ge 1$ 
	(we refer to \cite[Lemma~3.5]{GlM21} for a detailed proof of this). 
	Now we define a positive rank-$1$ operator $P: X \to X$ by the formula $Pv = \langle z', v \rangle z_{n_0}$ for each $v \in X$.
	
	This operator $P$ has norm $\norm{P} = \norm{z'} \norm{z_{n_0}} \le M' \norm{z_{n_0}} \le \varepsilon$. 
	But on the other hand, as we assumed $\spb(A) \ge 0$, 
	\begin{align*}
		Ax_{n_0} + Px_{n_0} & \ge (A - \spb(A))x_{n_0} + P x_{n_0} \\
		& = y_{n_0} - z_{n_0} + \langle z', x_{n_0} \rangle z_{n_0} \ge -z_{n_0} + z_{n_0} = 0. 
	\end{align*}
	This contradicts the rank-$1$ robust small-gain condition.
\end{proof}

\subsection{The finite-dimensional case revisited}
\label{subsection:finite-dim}

Let us briefly compare Theorem~\ref{thm:stability-for-pos-sg} 
with a classical result in finite dimensions.
Let $(X,X^+)$ be a finite-dimensional ordered Banach space with generating cone
(normality is automatic in finite dimensions)
and let $A: X \to X$ be a linear operator 
such that $e^{tA}$ is positive for each $t \ge 0$
(equivalently: $A$ is resolvent positive).
Stern proved in \cite[Corollary~1.6]{Stern1982} that $\spb(A) < 0$ 
if and only if
\begin{align}
	\label{eq:finite-dim-small-gain-condition}
	Ax \not\geq 0 \quad \text{for all} \quad x \in X^+ \setminus \{0\}.
\end{align}
Due to the occurrence of similar conditions in so-called 
\emph{small-gain theorems} for the stability of networks
(see for instance \cite{JTP94,DRW10,MKG20})
we call~\eqref{eq:finite-dim-small-gain-condition} a \emph{small-gain} condition.

In infinite dimensions, the condition~\eqref{eq:finite-dim-small-gain-condition}
is not sufficient to guarantee that $\spb(A) < 0$,
as each of the following simple examples shows: 

\begin{example}
	\label{exa:shift-non-uniform} 
	\begin{enumerate}[label=(\alph*)]
		\item\label{exa:shift-non-uniform:item:differential} 
		Equip the space $L^2(\R)$ with the pointwise almost everywhere order and
		let $(e^{tA})_{t \ge 0}$ denote the left shift semigroup on $X = L^2(\R)$, i.e.\ 
		$e^{tA}f = f(\argument+t)$ for all $f \in L^2(\R)$ and each $t \ge 0$. 
		The domain of $A$ is the Sobolev space $H^1(\R)$ and one has $Af = f'$ for each $f \in H^1(\R)$. 
		If a function $0 \le f \in H^1(\R)$ satisfies $f' = Af \ge 0$, 
		then $f$ is increasing and hence $f=0$ as $f \in L^2(\R)$. 
		
		Therefore, $Af \not\ge 0$ for all non-zero $0 \le f \in \dom{A}$.
		Yet, one has $\spb(A) = 0$: 
		by using the Fourier transform one can see that $\sigma(A) = i\R$. 
				
		\item\label{exa:shift-non-uniform:item:bounded}
		Endow $X := \ell^2(\N)$ with the pointwise order, let $R \in \calL(X)$ denote the right shift 
		and let $A := R - \frac{1}{2}\id$. 
		As $R$ is positive it is resolvent positive and hence, $A$ is resolvent positive, too. 
		If $x \in X^+$ satifies $Ax \ge 0$, then 
		\begin{align*}
			(0, x_1, x_2, \dots) \ge \frac{1}{2} (x_1, x_2, x_3, \dots),
		\end{align*}
		so $x = 0$. 
		This shows that $A$ satisfies the condition~\eqref{eq:finite-dim-small-gain-condition}. 
		Yet, the spectrum of $R$ is the closed unit disk and hence $\spb(A) = \frac{1}{2} > 0$.
	\end{enumerate}
\end{example}

Yet, Theorem~\ref{thm:stability-for-pos-sg} shows that 
the small-gain condition~\eqref{eq:finite-dim-small-gain-condition}
can be replaced with the uniform small-gain 
condition~\eqref{eq:uniform-SGC}, 
which is indeed equivalent to $\spb(A) < 0$.
It is worthwhile to explain the relation between the conditions~\eqref{eq:uniform-SGC} and~\eqref{eq:finite-dim-small-gain-condition}
at an intuitive level:
for positive $x$, the condition $Ax \not\ge 0$
from the small-gain condition~\eqref{eq:finite-dim-small-gain-condition}
can be rewritten as $\dist(Ax, X^+) > 0$,
and the uniform small-gain condition~\eqref{eq:uniform-SGC} 
is simply a uniform version thereof.

In finite dimensions, it easily follows from a compactness argument 
that the conditions~\eqref{eq:uniform-SGC} and~\eqref{eq:finite-dim-small-gain-condition} are equivalent
(alternatively, this also follows from the above quoted \cite[Corollary~1.6]{Stern1982}
and from Theorem~\ref{thm:stability-for-pos-sg} 
since both conditions are equivalent to $\spb(A) < 0$).

\subsection{Single operators vs.\ $C_0$-semigroups} 
\label{section:time-continuous-discussion}

Theorem~\ref{thm:stability-for-pos-sg} gives characterizations for the negativity of the spectral bound 
of resolvent positive operators and thus in particular for generators of positive $C_0$-semigroups.
Those characterizations are the counterparts for the small-gain criteria of stability of discrete-time semigroups 
shown recently in \cite{GlM21}.
In particular, a counterpart of Theorem~\ref{thm:stability-for-pos-sg} says (among other statements) 
the following (see \cite[Theorem~3.3]{GlM21}): 

\begin{theorem} 
	\label{thm:stability-for-pos-ops-discrete-time}
	Let $(X,X^+)$ be an ordered Banach space with generating and normal cone and assume that $T \in \calL(X)$ 
	is a positive operator. 
	Then the following assertions are equivalent:
	\begin{enumerate}[label=\upshape(\roman*)]
		\item\label{thm:stability-for-pos-ops-discrete-time:itm:stability} 
		The spectral radius of $T$ satisfies $\spr(T) < 1$.

		\item\label{thm:stability-for-pos-ops-discrete-time:itm:mbi} 
		There is a number $c > 0$ such that one has
		\begin{align*}
			(\id - T)x \le y \qquad \Rightarrow \qquad \norm{x} \le c \norm{y}.
		\end{align*}
		for all $x,y \in X^+$.
				
		\item\label{thm:stability-for-pos-ops-discrete-time:itm:uniform-small-gain} 
		There exists a number $\eta > 0$ such that for each $x \in X^+$
		\begin{align*}
			\dist\big((T-\id)x,X^+\big) \ge \eta \norm{x}.
		\end{align*}
	\end{enumerate}
\end{theorem}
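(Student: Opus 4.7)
My plan is to establish the cycle of implications \Implies{thm:stability-for-pos-ops-discrete-time:itm:stability}{thm:stability-for-pos-ops-discrete-time:itm:mbi}, \Implies{thm:stability-for-pos-ops-discrete-time:itm:mbi}{thm:stability-for-pos-ops-discrete-time:itm:uniform-small-gain}, and \Implies{thm:stability-for-pos-ops-discrete-time:itm:uniform-small-gain}{thm:stability-for-pos-ops-discrete-time:itm:stability}, closely mirroring the proof of Theorem~\ref{thm:stability-for-pos-sg} with the bounded operator $T - \id$ playing the role of the generator $A$ and with $\spr(T) < 1$ replacing $\spb(A) < 0$. Note that $T - \id$ is automatically resolvent positive, since for $\mu > \spr(T)$ the resolvent $((\mu+1)\id - T)^{-1}$ equals the Neumann series $\sum_{n=0}^\infty (\mu+1)^{-n-1} T^n$, which is a norm-convergent sum of positive operators.

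For \Implies{thm:stability-for-pos-ops-discrete-time:itm:stability}{thm:stability-for-pos-ops-discrete-time:itm:mbi}, assuming $\spr(T) < 1$, the operator $\id - T$ is invertible with positive inverse $(\id - T)^{-1} = \sum_{n=0}^\infty T^n$. Applying this positive operator to both sides of $(\id - T)x \le y$ yields $x \le (\id - T)^{-1}y$, and since $x \ge 0$, the normality of the cone produces $\norm{x} \le c \norm{y}$ with $c = C \norm{(\id - T)^{-1}}$. For \Implies{thm:stability-for-pos-ops-discrete-time:itm:mbi}{thm:stability-for-pos-ops-discrete-time:itm:uniform-small-gain} I would use the decomposition argument of Theorem~\ref{thm:stability-for-pos-sg} verbatim: for $x \in X^+$ and $\varepsilon > 0$, pick $z \in X^+$ with $\norm{(T - \id)x - z} \le \dist((T - \id)x, X^+) + \varepsilon$, apply~\eqref{eq:bounded-decomposability-constant} to write $(T - \id)x - z = u - v$ with $u, v \in X^+$ of norm at most $M$ times that quantity, observe $(\id - T)x = v - u - z \le v$, and conclude via the monotone bounded invertibility property that $\norm{x} \le c \norm{v} \le cM(\dist((T - \id)x, X^+) + \varepsilon)$. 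Letting $\varepsilon \to 0$ gives~\ref{thm:stability-for-pos-ops-discrete-time:itm:uniform-small-gain} with $\eta = 1/(cM)$.

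The only implication with nontrivial content is \Implies{thm:stability-for-pos-ops-discrete-time:itm:uniform-small-gain}{thm:stability-for-pos-ops-discrete-time:itm:stability}, which I would prove by contradiction. Suppose $\spr(T) \ge 1$. Since $T$ is resolvent positive and the cone is generating and normal, Proposition~\ref{prop:properties-of-resolvent-positive-operators} gives that $\spr(T)$ is a spectral value of $T$ and coincides with the real spectral bound, and a routine adaptation of Lemma~\ref{lem:positive-approximate-eigenvector} (applied to $T$ with the Neumann-series resolvent) furnishes a sequence $(x_n) \subseteq X^+$ with $\norm{x_n} = 1$ and $(\spr(T)\id - T)x_n \to 0$. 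Decomposing $(\spr(T)\id - T)x_n = y_n - z_n$ with $y_n, z_n \in X^+$ tending to $0$ via~\eqref{eq:bounded-decomposability-constant}, I can rearrange
\begin{align*}
	(T - \id)x_n + y_n = (\spr(T) - 1)x_n + z_n,
\end{align*}
and the right-hand side lies in $X^+$ because $\spr(T) \ge 1$ and $x_n, z_n \in X^+$. Hence $\dist((T - \id)x_n, X^+) \le \norm{y_n} \to 0$, contradicting the uniform small-gain condition as soon as $\norm{y_n} < \eta$.

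\textbf{Main obstacle.} The only step requiring more than bookkeeping is the existence of a positive approximate eigenvector at $\spr(T)$, i.e., the discrete-time counterpart of Lemma~\ref{lem:positive-approximate-eigenvector}. This rests on the fact that $\spr(T)$ belongs to $\sigma(T)$, which in turn follows from Proposition~\ref{prop:properties-of-resolvent-positive-operators}\ref{prop:properties-of-resolvent-positive-operators:itm:spectral-bound} applied to the resolvent-positive operator $T$; once $\spr(T) \in \sigma(T)$ is in hand, the same generating-cone argument used to prove Lemma~\ref{lem:positive-approximate-eigenvector} produces the desired positive approximate eigenvector.
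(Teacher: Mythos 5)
The paper itself does not prove this theorem: it is quoted verbatim from \cite[Theorem~3.3]{GlM21}, so there is no in-paper proof to compare against. Your derivation via the substitution $A = T - \id$ is exactly the translation mechanism the authors describe in Section~\ref{section:time-continuous-discussion}, and the implications \Implies{thm:stability-for-pos-ops-discrete-time:itm:stability}{thm:stability-for-pos-ops-discrete-time:itm:mbi} and \Implies{thm:stability-for-pos-ops-discrete-time:itm:mbi}{thm:stability-for-pos-ops-discrete-time:itm:uniform-small-gain} are correct as written; the rearrangement $(T-\id)x_n + y_n = (\spr(T)-1)x_n + z_n \ge 0$ in the last implication is also correct and gives the desired contradiction.

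The one point where your argument overreaches is the claim that Proposition~\ref{prop:properties-of-resolvent-positive-operators} ``gives that $\spr(T)$ is a spectral value of $T$.'' What Proposition~\ref{prop:properties-of-resolvent-positive-operators}\ref{prop:properties-of-resolvent-positive-operators:itm:spectral-bound} and \ref{prop:properties-of-resolvent-positive-operators:itm:spectral-bound-equal-real-spectral-bound} actually yield for the bounded resolvent positive operator $T$ is that $\spb(T) = \realspb(T) \in \spec(T)$; they do not by themselves rule out the a priori possibility $\spb(T) < \spr(T)$, in which case your contradiction argument (run at the approximate eigenvalue $\realspb(T)$) would only exclude $\spb(T) \ge 1$, not $\spr(T) \ge 1$. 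The missing link is the classical Pringsheim-type fact that for a positive bounded operator on an ordered Banach space with normal and generating cone one has $\spr(T) \in \spec(T)$, hence $\spr(T) = \realspb(T) = \spb(T)$; the paper states this explicitly in the paragraph following the theorem and cites \cite[paragraph~2.2 on p.\,311]{SchaeferWolff1999} for it. With that citation supplied, Lemma~\ref{lem:positive-approximate-eigenvector} applies directly to $T$ (no adaptation is needed, since $T$ is itself resolvent positive and has a real spectral value) and produces the positive approximate eigenvector at $\spr(T)$ that your argument requires; everything else goes through.
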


Note that, since in Theorem~\ref{thm:stability-for-pos-ops-discrete-time} the cone $X^+$ is generating and normal, the positivity of $T$ implies that $\spr(T)$ is a spectral value of $T$ (see for instance \cite[paragraph~2.2 on p.\ 311]{SchaeferWolff1999}), and hence $\spr(T) = \realspb(T) = \spb(T)$. Hence the condition $\spr(T)<1$ is equivalent to $\spb(T-\id)<0$.

After this reformulation, looking at Theorems~\ref{thm:stability-for-pos-sg} and \ref{thm:stability-for-pos-ops-discrete-time}, 
we see that the statements of Theorems~\ref{thm:stability-for-pos-sg} are \q{translated} into the statements of Theorems~\ref{thm:stability-for-pos-ops-discrete-time} by substituting $T-\id$ instead of $A$.
This difference can be understood by noting that mimicking the definition of the infinitesimal generator of a strongly continuous semigroup, the role of the \q{infinitesimal generator} for the discrete-time semigroup $\{T^k:k\in\Z_+\}$ is played by $T-\id$.

\subsection{Equality of the spectral and growth bound} 
\label{section:s_A_equals_omega_A}

Let $(e^{tA})_{t \ge 0}$ be a positive $C_0$-semigroup on an ordered Banach space with normal and generating cone.
Theorem~\ref{thm:stability-for-pos-sg} characterizes the property $\spb(A) < 0$. 
This is equivalent to $e^{tA} \to 0$ in operator norm as $t \to \infty$ if the semigroup has the spectral determined growth property 
$\spb(A) = \omega(A)$.
As explained in Subsection~\ref{section:stability-comments} this property is, for instance, 
satisfied for positive semigroups on $L^p$-spaces. 
The following theorem gives another sufficient condition for this property. 
A set $S$ in an ordered Banach space $(X,X^+)$ is called \emph{order bounded} if there exist points $x,y \in X$ such that $S \subseteq [x,y]$.

\begin{theorem}
	\label{thm:s_A_equals_omega_A}
	Let $(X,X^+)$ be an ordered Banach space with generating and normal cone 
	and let $A: X \supseteq \dom{A} \to X$ generate a positive $C_0$-semigroup on $X$. 
	Let $0 \le t_0 < t_1$ and assume that the set $\{e^{tA}f: \, t \in [t_0,t_1]\}$ is order bounded for each $f \in X$. 
	Then $\spb(A) = \omega(A)$.
\end{theorem}

\begin{proof} 
	Since replacing $A$ with $A-c\id$ for any number $c \in \R$ does not change the order boundedness assumption, 
	it suffices to prove that if $\spb(A) < 0$, then $\omega(A) \le 0$. 
	So assume that $\spb(A) < 0$. 
	
	First, we will show that the orbit $\{e^{tA}x : \, t \ge 0\}$ is norm bounded for every $x \in X^+$.
	From this fact and the assumption that $X^+$ is generating, it then follows that the orbit of each vector in $X$ is norm bounded 
	and by the uniform boundedness principle, we thus get boundedness of the semigroup, so $\omega(A) \le 0$.
	
	So fix $x \in X^+$.
	We will use the following known result:
	since the cone $X^+$ is assumed to be normal and generating, 
	the resolvent $\Res(0,A)$ is given by 
	\begin{align*}
		\Res(0,A)x = \int_0^\infty e^{sA} x \dx s \quad \text{for each } x\in X,
	\end{align*}
	where the integral converges as an improper Riemann integral \cite[Theorem~2.4.2(2)]{BattyRobinson1984}.
	
	Due to the order boundedness assumption, there exists a vector $y \in X_+$ such that 
	$0 \le e^{tA} x \le y$ for all $t \in [t_0, t_1]$. 
	For every $t \ge t_1$ this implies 
	\begin{align*}
		e^{tA} x 
		= 
		\frac{1}{t_1-t_0}
		\int_{t_0}^{t_1} e^{(t-s)A}e^{sA}x \dx s
		& \le 
		\frac{1}{t_1-t_0} 
		\int_{t_0}^{t_1} e^{(t-s)A} y \dx s 
		\\ 
		& \le 
		\frac{1}{t_1-t_0}
		\int_0^\infty e^{sA} y \dx s
		= 
		\frac{1}{t_1-t_0}
		\Res(0,A)y
		.
	\end{align*}
	As the cone is normal, this implies that $\{e^{tA}x : \, t \ge t_1\}$ is norm bounded 
	and hence, $\{e^{tA}x : \, t \ge 0\}$ is norm bounded, too, as claimed.
\end{proof}

In the special case where $X$ is a Banach lattice and $t_0 = 0$, 
Theorem~\ref{thm:s_A_equals_omega_A} was proved in \cite[Theorem~3.1]{GlueckKaplin2024}. 
Our proof is an adaptation of the proof in this reference, 
which is in turn an adaptation of the proof of \cite[Theorem~1]{AroraGlueck2022}.
Theorem~\ref{thm:s_A_equals_omega_A} contains various known results as special cases:

\begin{examples}
	\label{exas:s_A_equals_omega_A}
	Let $(e^{tA})_{t \ge 0}$ be a positive $C_0$-semigroup on an ordered Banach space $(X,X^+)$ 
	and assume that the cone $X^+$ is normal and generating. 
	Each of the following conditions implies that $\spb(A) = \omega(A)$.
	\begin{enumerate}[label=(\alph*)]
		\item\label{exas:s_A_equals_omega_A:item:non-empty-interior} 
		The cone $X^+$ has non-empty interior. 
		In this case, proofs of $\spb(A) = \omega(A)$ can, for instance, be found in 
		\cite[Theorem~5.3]{ArendtChernoffKato1982} or \cite[Corollary~2.3]{Arendt1987}. 
		
		The result is a special case of Theorem~\ref{thm:s_A_equals_omega_A} 
		since the existence of an interior point of $X^+$ implies that every norm bounded set in $X$ is order bounded, 
		see Lemma~\ref{lem:properties-interior-points} below.
		We will give another proof of the equality $\spb(A) = \omega(A)$ for $X^+$ with non-empty interior
		in Corollary~\ref{cor:equality-spectral-and-growth-bounds} below.
		
		Typical examples of spaces $X$ in which the cone has non-empty interior 
		are spaces $C(K)$ of continuous functions on compact Hausdorff spaces $K$ 
		and the self-adjoint parts of unital $C^*$-algebras.
		
		\item 
		Every compact set in $X$ is order bounded.
		
		This is equivalent to so-called \emph{$\alpha$-directedness} of $X$, 
		see \cite[Theorem~1]{Wickstead1975}, and the equality $\spb(A) = \omega(A)$ on such spaces 
		was proved in \cite[Theorem~4]{BattyDavies1983} and \cite[Corollary~2.4.5]{BattyRobinson1984}. 
		The equality is also an immediate consequence of Theorem~\ref{thm:s_A_equals_omega_A} 
		since the set $\{e^{tA}f: \, t \in [0,1]\}$ is compact and thus,
		due to the assumption on $X$, order bounded for each $f \in X$.
		
		Typical examples of such spaces $X$ are spaces $C_0(L)$ of continuous functions 
		that vanish at infinity on locally compact Hausdorff spaces $L$ 
		and the self-adjoint parts of arbitrary $C^*$-algebras.
		
		\item 
		There exists a time $t_0 \ge 0$ and a vector $h \in X_+$ such that the range $e^{t_0A}X$ is contained 
		in the so-called \emph{principal ideal} $X_h := \bigcup_{c \in [0,\infty)} [-ch,ch]$.
		
		In the special case where $X$ is a Banach lattice, 
		a proof of $\spb(A) = \omega(A)$ in this case can be found in \cite[Theorem~C-IV-1.1(b) on p.\,334]{Nagel1986}. 
		In the general case, the equality follows from Theorem~\ref{thm:s_A_equals_omega_A} by the following argument: 
		
		Since $X^+$ is normal, one can show that $X_h$ is a Banach space with respect to the so-called 
		\emph{gauge} norm $\norm{\argument}_h$ given by $\norm{x}_h := \min \{c \ge 0: \, x \in [-ch,ch]\}$ for all $x \in X_h$. 
		Moreover, also by the normality of the cone, $X_h$ endowed with this norm embeds continuously into $X$. 
		It thus follows from the closed graph theorem that $e^{t_0A}$ is a continuous operator from $X$ to $X_h$. 
		So we conclude from the semigroup law that, for each $f \in X$, the set $\{e^{tA}f: \, t \in [t_0, t_0+1]\}$ is norm bounded in $X_h$ 
		and hence order bounded in $X$.
	\end{enumerate}
\end{examples}

We point out that the equality $\spb(A) = \omega(A)$ for generators of positive semigroups 
does not hold on general ordered Banach spaces with normal and generating cone; 
see for instance \cite[Example~5.1.11]{ArendtBattyHieberNeubrander2011} for a counterexample.

\section{Stability if the cone has interior points}
\label{section:interior-points}

In this section, we consider the case where the cone in an ordered Banach space $(X,X^+)$ has non-empty (topological) interior
and show a similar result as Theorem~\ref{thm:stability-for-pos-sg} for this case;
the point is that the additional assumption on the cone allows for characterizations of $\spb(A)$ 
by a priori weaker statements. 
Moreover, it gives us equality of the spectral and the growth bound of positive semigroups, 
as pointed out in Example~\ref{exas:s_A_equals_omega_A}\ref{exas:s_A_equals_omega_A:item:non-empty-interior} above.

Note that the assumption that the cone $X^+$ has non-empty interior is rather strong; 
it is, for instance, satisfied for spaces of continuous functions over compact sets, and for $L^\infty$-spaces, 
but it is not satisfied on $L^p$-spaces for $p < \infty$, unless the space is finite-dimensional.

We will need the following well-known equivalences. 
For a more detailed discussion we refer for instance to \cite[Definition~2.4(iii) and~(v) and Proposition~2.11]{GlueckWeber2020}. 

\begin{lemma}
	\label{lem:properties-interior-points} 
	Let $(X,X^+)$ be an ordered Banach space and let $z \in X^+$. The following statements are equivalent:
	\begin{enumerate}[label=\upshape(\roman*)]
		\item\label{lem:properties-interior-points:itm:interior-point} 
		The vector $z$ is an element of the topological interior of $X^+$.
		
		\item\label{lem:properties-interior-points:itm:order-unit} 
		The vector $z$ is an \emph{order unit}, i.e., for each $y \in X$ there exists $\varepsilon > 0$ such that $z \ge \varepsilon y$.
		
		\item\label{lem:properties-interior-points:itm:unit-ball} 
		There is $c>0$, such that for all $y \in X$ with $\|y\|\leq c$, we have $-z \le y \le z$.
	\end{enumerate}
\end{lemma}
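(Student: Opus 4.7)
My plan is to prove the equivalence via the cycle \ref{lem:properties-interior-points:itm:interior-point} $\Rightarrow$ \ref{lem:properties-interior-points:itm:unit-ball} $\Rightarrow$ \ref{lem:properties-interior-points:itm:order-unit} $\Rightarrow$ \ref{lem:properties-interior-points:itm:interior-point}. The first two implications are straightforward: if $z$ is in the interior of $X^+$, choose $c>0$ with the open ball $B(z,c)$ contained in $X^+$; then for $\|y\|\le c$ both $z+y$ and $z-y$ lie in $X^+$, which gives \ref{lem:properties-interior-points:itm:unit-ball}. For \ref{lem:properties-interior-points:itm:unit-ball} $\Rightarrow$ \ref{lem:properties-interior-points:itm:order-unit}, given $y\neq 0$ rescale to $\tilde y := cy/\norm{y}$, apply \ref{lem:properties-interior-points:itm:unit-ball} to obtain $\tilde y\le z$, and conclude $z\ge (c/\norm{y})y$.

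The main work is in the implication \ref{lem:properties-interior-points:itm:order-unit} $\Rightarrow$ \ref{lem:properties-interior-points:itm:interior-point}, which I would handle via a Baire category argument applied to the order interval $[-z,z]$. First I would record that $[-z,z]=(z-X^+)\cap(-z+X^+)$ is closed, convex, symmetric about $0$, and balanced (for $0\le\alpha\le 1$ and $-z\le v\le z$, the inequalities $-\alpha z\le \alpha v\le\alpha z$ together with $\alpha z\le z$ and $-z\le -\alpha z$, which use $z\in X^+$, show $\alpha v\in[-z,z]$; the case $\alpha<0$ follows by symmetry).

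The crucial step is showing that \ref{lem:properties-interior-points:itm:order-unit} makes $[-z,z]$ \emph{absorbing}: applied to $y$ and $-y$, the order-unit property yields $\varepsilon_1,\varepsilon_2>0$ with $\varepsilon_1 y\le z$ and $-\varepsilon_2 y\le z$; setting $\varepsilon:=\min(\varepsilon_1,\varepsilon_2)$ and using positivity of $z$ to rescale the inequalities (so that $\varepsilon y\le (\varepsilon/\varepsilon_1) z\le z$ and analogously from below), one gets $\varepsilon y\in[-z,z]$. Hence $X=\bigcup_{n\in\N} n[-z,z]$, and since $X$ is a Banach space the Baire category theorem forces some $n[-z,z]$, and therefore $[-z,z]$ itself, to have non-empty interior.

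Finally I would translate this into \ref{lem:properties-interior-points:itm:interior-point}: if $B(y_0,r)\subseteq[-z,z]$, then by symmetry $B(-y_0,r)\subseteq[-z,z]$ too, and by convexity the midpoint $B(0,r)=\tfrac12 B(y_0,r)+\tfrac12 B(-y_0,r)$ lies in $[-z,z]$. Thus $-z\le y\le z$ for every $\norm{y}\le r$; in particular $z+y\in X^+$ for all such $y$, i.e.\ $B(z,r)\subseteq X^+$, so $z$ is an interior point of $X^+$. The main obstacle is the absorbing/Baire step, which is the only place where completeness of $X$ and the positivity (not just boundedness) of $z$ in the order-unit condition are genuinely used.
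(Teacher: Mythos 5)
Your proof is correct; the paper does not prove this lemma itself but refers to \cite{GlueckWeber2020}, and your Baire-category argument (closedness, convexity, symmetry and absorbency of the order interval $[-z,z]$, then moving the resulting non-empty interior to $0$ and hence to $z$) is exactly the standard argument behind that reference, including the correct identification of where completeness of $X$ and positivity of $z$ enter. The only microscopic point is that in (i)$\Rightarrow$(iii) the boundary case $\norm{y}=c$ requires the closedness of $X^+$ (or simply replacing $c$ by $c/2$), which is not a gap.
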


For two vectors $x,y \in X$, we write $x \ll y$ (or $y \gg x$) 
if there exists an interior point $z$ of $X^+$ such that $x+z \le y$.

We will need the following result about the existence of dual eigenvectors:

\begin{proposition}
	\label{prop:dual-eigenvector}
	Let $(X,X^+)$ be an ordered Banach space and suppose that the cone $X^+$ is normal and has non-empty interior. 
	Let $A: X \supseteq \dom{A} \to X$ be a densely defined resolvent positive linear operator.
	
	Then the spectral bound $\spb(A)$ satisfies $\spb(A) > -\infty$, it is an eigenvalue of the dual operator $A'$, 
	and there exists a corresponding eigenvector $0 \le z' \in \dom{A'}$.
\end{proposition}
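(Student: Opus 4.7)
The plan is to handle the three assertions in sequence. To show $\spb(A) > -\infty$, I would use density of $\dom{A}$ together with openness of $\intt X^+$ to pick $v \in \dom{A} \cap \intt X^+$. Being interior, $v$ is an order unit by Lemma~\ref{lem:properties-interior-points}, so some $\gamma > 0$ satisfies $(A+\gamma)v \in X^+$. If $\spb(A)$ were $-\infty$, then Proposition~\ref{prop:properties-of-resolvent-positive-operators}\ref{prop:properties-of-resolvent-positive-operators:itm:positive-and-decreasing} would give $\Res(\lambda,A) \ge 0$ for all real $\lambda$; applying the resolvent to $(A+\gamma)v$ and rearranging produces $(\lambda+\gamma)\Res(\lambda,A)v \ge v$. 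Taking $\lambda < -\gamma$ would then force $v \in -X^+$ and contradict $v \in X^+ \setminus \{0\}$.

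For the remaining assertions, set $s := \spb(A) \in \R$; by Proposition~\ref{prop:properties-of-resolvent-positive-operators}\ref{prop:properties-of-resolvent-positive-operators:itm:spectral-bound}, $s \in \spec(A)$. The strategy is to reduce to a bounded positive operator: fix $\mu_0 > s$ and put $T := \Res(\mu_0,A) \in \calL(X)$. Then $T$ is positive, and the usual spectral mapping for resolvents yields $\spr(T) = (\mu_0 - s)^{-1} \in \spec(T)$. By the identity $\Res(\mu_0,A)' = \Res(\mu_0,A')$, any positive eigenvector $z' \in (X')^+$ of $T'$ for the eigenvalue $\spr(T)$ automatically lies in $\dom{A'}$ and solves $A'z' = sz'$; hence the problem reduces to producing a positive eigenvector of $T'$ for its spectral radius.

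To construct such a $z'$, I would mimic Lemma~\ref{lem:positive-approximate-eigenvector} on the dual side. Pick $s_n \downarrow \spr(T)$ and $u \in \intt X^+$, and set $w_n := \Res(s_n,T)u \in X^+$. Using Lemma~\ref{lem:properties-interior-points} (the order-unit estimate) and normality of the cone, one obtains $\norm{\Res(s_n,T)} \le C\norm{w_n}$, so $\norm{w_n} \to \infty$ because $\spr(T) \in \spec(T)$. Since $X^+$ is normal, its dual cone is generating, and Hahn--Banach combined with a positive decomposition in the dual yields $\phi_n \in (X')^+$ with $\norm{\phi_n} \le M$ and $\langle \phi_n, w_n \rangle \ge \norm{w_n}$. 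Put $\psi_n := \Res(s_n,T)'\phi_n \in (X')^+$ and $\tilde\psi_n := \psi_n/\norm{\psi_n}$; the resolvent identity gives $(T' - s_n)\tilde\psi_n = -\phi_n/\norm{\psi_n}$, which tends to $0$ because $\norm{\psi_n} \ge \norm{w_n}/\norm{u} \to \infty$. A weak-$*$ limit point $z' \in (X')^+$, produced via Banach--Alaoglu, then satisfies $T'z' = \spr(T)z'$ by weak-$*$ continuity of $T'$.

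The main obstacle is to ensure that this $z'$ is nonzero. The cleanest way is the three-way norm comparison
\begin{align*}
	\langle \tilde\psi_n, u \rangle \;=\; \frac{\langle \phi_n, w_n \rangle}{\norm{\psi_n}} \;\ge\; \frac{\norm{w_n}}{\norm{\phi_n}\cdot\norm{\Res(s_n,T)}} \;\ge\; \frac{1}{MC},
\end{align*}
which survives passage to the weak-$*$ limit and yields $\langle z', u \rangle \ge 1/(MC) > 0$. This estimate is precisely where the interior-point hypothesis is used in an essential way, as it is what allows one to convert the operator norm $\norm{\Res(s_n,T)}$ into a pairing with the single fixed test vector $u$.
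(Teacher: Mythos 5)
Your proof is correct, but it takes a genuinely different route from the paper's. For $\spb(A) > -\infty$ the paper simply cites the non-emptiness of $\spec(A)$ from \cite[bottom of page~174]{ArendtChernoffKato1982}, whereas you give a short self-contained argument from an order unit $v \in \dom{A} \cap \intt(X^+)$; that argument is sound (it implicitly uses that a cone with interior points is generating, so that $\spb(A) = \realspb(A)$ and the resolvent is positive on all of $\R$ when the spectrum is empty). For the dual eigenvector, the paper stays at the level of the unbounded operator: it observes that $A'$ is itself resolvent positive on $(X', (X')^+)$ with generating and normal dual cone, applies Lemma~\ref{lem:positive-approximate-eigenvector} directly to $A'$ to get a positive approximate eigenvector for $\spb(A')$, and passes to a weak${}^*$ limit using that the graph of $A'$ is weak${}^*$-closed. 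You instead reduce to the bounded positive operator $T = \Res(\mu_0,A)$, hand-build a positive approximate eigenvector of $T'$ at $\spr(T)$, pass to a weak${}^*$ limit via the weak${}^*$-continuity of $T'$ (which spares you the closed-graph argument for $A'$), and translate back through the spectral mapping $\spr(T) = (\mu_0 - s)^{-1}$; all the individual steps (the estimate $\norm{\Res(s_n,T)} \le C\norm{w_n}$ from the order-unit ball inclusion plus normality, the bounded positive decomposition in the generating dual cone, the identity $(T'-s_n)\psi_n = -\phi_n$) check out. The one place where you work harder than necessary is the non-vanishing of $z'$: since each $\tilde\psi_n$ is a \emph{normalized positive} functional, the inclusion $B_\varepsilon \subseteq [-u,u]$ already gives $\langle \tilde\psi_n, u\rangle \ge \varepsilon \norm{\tilde\psi_n} = \varepsilon$ directly (this is the paper's argument); your three-way chain through $\norm{w_n}$, $\norm{\phi_n}$ and $\norm{\Res(s_n,T)}$ reaches the same conclusion but tracks constants you do not need. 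What your approach buys is independence from the weak${}^*$-closedness of the graph of the unbounded dual operator and a constructive proof that the spectrum is non-empty; what the paper's approach buys is brevity, by reusing Lemma~\ref{lem:positive-approximate-eigenvector} verbatim on $A'$.
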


Our proof of the proposition is a simple adaptation of an analogous result for positive operators in \cite[Eigenvalue Theorem on p.\,705]{Schaefer1967}. We need the following simple observation: we can identify the square $X' \times X'$ with the dual of the Banach space $X \times X$ by identifying each pair $(x', y') \in X' \times X'$ with the functional
\begin{align*}
	X \times X \ni (x,y) \mapsto \langle x',x\rangle + \langle y', y\rangle \in \R.
\end{align*}
By means of this identification, $X' \times X'$ carries a weak${}^*$-topology, which is easily seen to coincide with the product topology of the weak${}^*$-topology on $X'$.

\begin{proof}[Proof of Proposition~\ref{prop:dual-eigenvector}]
	It was shown in \cite[bottom of page~174]{ArendtChernoffKato1982} that $\spec(A)$ is non-empty, 
	so $\spb(A) > -\infty$.
	As $A$ is densely defined, 
	the dual operator $A': X' \supseteq \dom{A'} \to X'$ is well-defined.
	Furthermore, $\sigma(A') = \sigma(A)$ and thus $\spb(A') = \spb(A)$  
	(see \cite[Theorem~2 on p.\,225]{Yosida1980}). 
	As for $\mu\in(\spb(A'),+\infty)$ we have that $\Res(\mu,A') = (\Res(\mu,A))'$ 
	\cite[Theorem~2 on p.\,225]{Yosida1980}, 
	and since the dual of a positive operator is again positive, 
	we obtain that $A'$ is resolvent positive on the ordered Banach space $(X', (X')^+)$, 
	with $\spb(A') = \spb(A)$.
	
	Since the dual cone $(X')^+$ is generating and normal, it follows from 
	Proposition~\ref{prop:properties-of-resolvent-positive-operators}%
	\ref{prop:properties-of-resolvent-positive-operators:itm:spectral-bound-equal-real-spectral-bound} 
	that $\spb(A')=\realspb(A')$.
	Hence, it follows from 
	Lemma~\ref{lem:positive-approximate-eigenvector} that $\spb(A')$ 
	is even an approximate eigenvalue of $A'$ 
	with an approximate eigenvector $(x_n')$ in $(X')^+$.
	
	As $\|x_n'\|=1$ for all $n\in\N$, by the Banach--Alaoglu theorem \cite[Theorem~V.3.1 on p.~130]{Con90} there is a 
	weak${}^*$-convergent subnet of $(x_n')$, whose
	weak${}^*$-limit we denote by $z' \in (X')^+$. 
	Since the graph of $A'$ is weak${}^*$-closed in $X' \times X'$ 
	\cite[Proposition~1.1.1]{vanNeerven1992}, 
	it follows that $z' \in \dom{A'}$ and $A'z' = \spb(A)z'$. 

	So it only remains to show that $z'$ is non-zero. 
	To this end, fix an interior point $z \in X^+$ of $X^+$. 
	Then there exists a number $\varepsilon > 0$ 
	such that the order interval $[-z,z]$ contains $B_\varepsilon:=\{x\in X:\|x\|\leq \varepsilon\}$; 
	see Lemma~\ref{lem:properties-interior-points}\ref{lem:properties-interior-points:itm:unit-ball}. 
	This implies that for each $x' \in (X')^+$ and each $x \in B_\varepsilon$
	\[
		 \langle x', -z \rangle \leq  \langle x', x \rangle \leq  \langle x', z \rangle, 
	\]
	and thus 
	\[
		\langle x', z \rangle \geq \sup_{x\in B_\varepsilon} |\langle x', x \rangle| = \varepsilon \norm{x'}.
	\]
	Hence, we have $\langle x_n', z \rangle \ge \varepsilon$ for each $n$ and thus, 
	$\langle z', z \rangle \ge \varepsilon$, so $z'$ is indeed non-zero.
\end{proof}

Resolvent positivity does not guarantee that the resolvent maps the interior of $X^+$ into itself. 
The next proposition gives a simple yet useful characterization of this property.

\begin{proposition}
	\label{prop:Resolvent-mapping-interior-into-interior} 
	Let $(X,X^+)$ be an ordered Banach space with $\intt(X^+) \neq \emptyset$. 
	Let $A: X \supseteq \dom{A} \to X$ be a resolvent positive linear operator and $\lambda>\realspb(A)$. 
	The following statements are equivalent:
	\begin{enumerate}[label=\upshape(\roman*)]
		\item\label{prop:Resolvent-mapping-interior-into-interior:itm:strict-positivity} $\Res(\lambda,A)$ maps $\intt(X^+)$ into $\intt(X^+)$. 
		\item\label{prop:Resolvent-mapping-interior-into-interior:itm:domain} $D(A) \cap\intt(X^+) \neq\emptyset$.
	\end{enumerate}
\end{proposition}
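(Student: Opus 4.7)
The plan is to show both implications directly, with the forward implication being essentially trivial and the reverse implication requiring a short translation argument using the order-unit characterization of interior points.

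For \ref{prop:Resolvent-mapping-interior-into-interior:itm:strict-positivity} $\Rightarrow$ \ref{prop:Resolvent-mapping-interior-into-interior:itm:domain}, I would simply pick any $z \in \intt(X^+)$ (this set is non-empty by assumption) and observe that $\Res(\lambda,A)z \in \intt(X^+)$ by hypothesis while $\Res(\lambda,A)z \in \dom{A}$ since the range of the resolvent is the domain of $A$.

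For the reverse implication \ref{prop:Resolvent-mapping-interior-into-interior:itm:domain} $\Rightarrow$ \ref{prop:Resolvent-mapping-interior-into-interior:itm:strict-positivity}, I would fix $x_0 \in \dom{A} \cap \intt(X^+)$ and set $w := (\lambda - A)x_0 \in X$, so that $\Res(\lambda,A)w = x_0$. Note $w \neq 0$, because $\lambda$ is in the resolvent set and $x_0 \neq 0$. Now take an arbitrary $y \in \intt(X^+)$; by Lemma~\ref{lem:properties-interior-points}\ref{lem:properties-interior-points:itm:unit-ball} there exists $c > 0$ such that $-y \le v \le y$ for every $v \in X$ with $\|v\| \le c$. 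Applying this to $v = cw/\|w\|$ gives $w \le (\|w\|/c)\,y$, i.e.\ $w \le C y$ with $C := \|w\|/c > 0$.

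Since $\lambda > \realspb(A)$, Proposition~\ref{prop:properties-of-resolvent-positive-operators}\ref{prop:properties-of-resolvent-positive-operators:itm:positive-and-decreasing} tells us that $\Res(\lambda,A)$ is a positive operator, so the inequality $w \le Cy$ yields $x_0 = \Res(\lambda,A)w \le C\,\Res(\lambda,A)y$. Hence $\Res(\lambda,A)y = \tfrac{1}{C}x_0 + u$ for some $u \in X^+$. Since $\tfrac{1}{C}x_0 \in \intt(X^+)$, there is an open ball around $\tfrac{1}{C}x_0$ contained in $X^+$; translating this ball by $u \in X^+$ produces an open ball around $\Res(\lambda,A)y$ contained in $X^+ + X^+ = X^+$, and thus $\Res(\lambda,A)y \in \intt(X^+)$. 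The main (and essentially only) conceptual step is choosing the right quantitative statement of \emph{$y$ is an order unit} to control $w$ in terms of $y$; everything else is a formal manipulation.
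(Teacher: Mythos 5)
Your proof is correct and follows essentially the same route as the paper: the forward implication is the same triviality, and for the reverse implication the paper likewise uses the order-unit/unit-ball characterization of interior points to bound the preimage $w=(\lambda-A)x_0$ by a multiple of $y$, then applies the positive resolvent to conclude $\Res(\lambda,A)y \ge \tfrac{1}{C}x_0 \gg 0$. The only cosmetic difference is that you invoke the unit-ball form of Lemma~\ref{lem:properties-interior-points} and spell out the final ``above an interior point implies interior'' step, which the paper abbreviates with the $\gg$ notation.
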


\begin{proof}
	\Implies{prop:Resolvent-mapping-interior-into-interior:itm:strict-positivity}{prop:Resolvent-mapping-interior-into-interior:itm:domain}
	This is clear, as $D(A) = \Res(\lambda,A)X$ and $\intt(X^+) \neq \emptyset$.

	\Implies{prop:Resolvent-mapping-interior-into-interior:itm:domain}{prop:Resolvent-mapping-interior-into-interior:itm:strict-positivity}
	Let  $z \in D(A) \cap \intt(X^+)$. 
	Then there is $x \in X$ such that  $\Res(\lambda,A)x = z$.
	Let $y$ be an arbitrary interior point of $X^+$. 
	By Lemma~\ref{lem:properties-interior-points}\ref{lem:properties-interior-points:itm:order-unit}, 
	there is $\varepsilon > 0$ with $y\ge \varepsilon x$.
	As $A$ is resolvent positive, by 
	Proposition~\ref{prop:properties-of-resolvent-positive-operators}\ref{prop:properties-of-resolvent-positive-operators:itm:positive-and-decreasing}, 
	$\Res(\lambda,A)$ is a positive operator, and thus 
	\[
		\Res(\lambda,A)y \ge \varepsilon \Res(\lambda,A)x = \varepsilon z \gg 0,
	\]
	which proves~\ref{prop:Resolvent-mapping-interior-into-interior:itm:strict-positivity}.
\end{proof}

Note that, if $X^+$ has non-empty interior, every densely defined operator $A$ (and thus, for instance, every generator of a $C_0$-semigroup) satisfies condition~\ref{prop:Resolvent-mapping-interior-into-interior:itm:domain} in Proposition~\ref{prop:Resolvent-mapping-interior-into-interior}.
On the other hand, it is easy to find operators that do not satisfy the equivalent conditions of the previous proposition (and are thus not densely defined). Here is an example:

\begin{example}
	\label{ex:no-interior-point-in-domain}
	Endow the space $\ell_\infty$ with the usual cone $\ell_\infty^+$ of sequences 
	that are nonnegative in each component.
	Clearly, $\ell_\infty^+$ has non-empty interior.
	The multiplication operator $A: \ell_\infty \supseteq \dom{A} \to \ell_\infty$ that is given by
	\begin{align*}
		\dom{A} & = \left\{ x = (x_n)_{n \in \N} \in \ell_\infty: \; (-nx_n)_{n \in \N} \in \ell_\infty \right\}, \\
		Ax      & = (-nx_n)_{n \in \N} 
	\end{align*}
	has spectral bound $\spb(A) = -1$ and is resolvent positive, 
	but $\dom{A}$ does not contain any interior points of $\ell_\infty^+$. 
\end{example}

We note in passing that, as the operator $A$ in the previous example is not densely defined, it does not generate a $C_0$-semigroup.
(And in fact, all $C_0$-semigroups on $\ell^\infty$ have bounded generator \cite[Theorem~A-II-3.6(2)]{Nagel1986}.)

We will exploit also the following result, which is closely related to 
\cite[formula~(5.2) in Theorem~5.3]{ArendtChernoffKato1982}
and \cite[Proposition 3.9]{GlM21}:

\begin{proposition}
	\label{prop:spr-super-eigenvector-res-positive}
	Let $(X,X^+)$ be an ordered Banach space with $\intt\, X^+ \neq \emptyset$ 
	and let $A: X \supseteq \dom{A} \to X$ be a densely defined resolvent positive linear operator. 
	Then 
	\begin{align*}
		\realspb(A) \geq 
		\inf \left\{ \lambda \in [0,\infty) : \; \exists x \in \intt(X^+)\cap\dom{A} \text{ s.t. } Ax \ll \lambda x \right\}.
	\end{align*}
\end{proposition}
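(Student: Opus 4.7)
The plan is to prove the inequality by exhibiting, for every real $\mu > \realspb(A)$ with $\mu \ge 0$, a witness $x \in \intt(X^+) \cap \dom{A}$ satisfying $Ax \ll \mu x$. This shows that every such $\mu$ belongs to the set $S$ on the right-hand side, whence $\inf S \le \mu$; letting $\mu$ decrease to $\max\{0,\realspb(A)\}$ then yields $\inf S \le \max\{0,\realspb(A)\}$, which delivers the stated inequality at least in the substantive regime $\realspb(A)\ge 0$ (consistent with the restriction $\lambda \in [0,\infty)$ imposed on $S$).

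The construction is a direct application of the resolvent. Fix any interior point $z \in \intt(X^+)$ and set $x := \Res(\mu,A)\,z$. By Proposition~\ref{prop:properties-of-resolvent-positive-operators}\ref{prop:properties-of-resolvent-positive-operators:itm:positive-and-decreasing}, every $\mu > \realspb(A)$ lies in the resolvent set of $A$ and the resolvent is positive there, so $x$ is well-defined and lies in $\dom{A}$. The tautological identity $(\mu - A)x = z$ then rearranges to $\mu x - Ax = z \in \intt(X^+)$, which is precisely the relation $Ax \ll \mu x$ by the definition of $\ll$.

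What remains is to verify that $x$ itself is an interior point of $X^+$. Here I would invoke Proposition~\ref{prop:Resolvent-mapping-interior-into-interior}: since $A$ is densely defined, $\dom{A}$ is dense in $X$ and therefore meets the non-empty open set $\intt(X^+)$, so $\dom{A} \cap \intt(X^+) \ne \emptyset$. Proposition~\ref{prop:Resolvent-mapping-interior-into-interior} then guarantees that $\Res(\mu,A)$ sends $\intt(X^+)$ into $\intt(X^+)$, and in particular $x \in \intt(X^+)$. This interior-preservation step is really the only point where something nontrivial is needed; everything else collapses to the algebraic identity $(\mu-A)\Res(\mu,A) = \id$. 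Thus I expect no genuine obstacle beyond making sure the hypotheses of Proposition~\ref{prop:Resolvent-mapping-interior-into-interior} are in force, which they are by density of $\dom{A}$.
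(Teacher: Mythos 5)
Your proof is correct and follows essentially the same route as the paper's: fix $\mu > \realspb(A)$, set $x := \Res(\mu,A)z$ for an interior point $z$, use Proposition~\ref{prop:Resolvent-mapping-interior-into-interior} (whose hypothesis holds by density of $\dom{A}$) to get $x \in \intt(X^+)\cap\dom{A}$, and read off $Ax = \mu x - z \ll \mu x$. Your side remark about the restriction $\lambda \in [0,\infty)$ is well taken: the paper's own proof likewise only establishes the inequality with the infimum taken over all real $\lambda > \realspb(A)$, so both arguments deliver the statement as literally written exactly in the regime $\realspb(A) \ge 0$ that you identify.
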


\begin{proof}
	Let $\lambda > \realspb(A)$. 
	Then $\lambda$ is in the resolvent set of $A$ and, 
	as $A$ is a resolvent positive operator, 
	Proposition~\ref{prop:properties-of-resolvent-positive-operators}\ref{prop:properties-of-resolvent-positive-operators:itm:positive-and-decreasing}
	implies that $\Res(\lambda,A)$ is a positive operator.
	
	Take an arbitrary point $y \in \intt(X^+)$ and define $x:= \Res(\lambda,A)y \in \dom{A}^+$.
	As $A$ is densely defined, clearly $D(A)\cap \intt(X^+)\neq\emptyset$, 
	so by Proposition~\ref{prop:Resolvent-mapping-interior-into-interior}, $x \in \dom{A} \cap \intt(X^+)$.
	Now, we estimate $Ax$ as
	\begin{align*}
		Ax
		& = \big(A - \lambda \id + \lambda \id\big) \, \Res(\lambda,A) y 
		 = - y + \lambda\Res(\lambda,A) y  = -y + \lambda x \ll \lambda x.
	\end{align*}
	This completes the proof.
\end{proof}

\begin{remark}
	\label{rem:ArendtChernoffKato1982} 
	If the cone $X^+$ is in addition normal, then a stronger counterpart of Proposition~\ref{prop:spr-super-eigenvector-res-positive} holds, see \cite[Theorem~5.3]{ArendtChernoffKato1982}.
\end{remark}

The following theorem, which characterizes the stability of resolvent positive operators in case that $X^+$ has non-empty interior, complements \cite[Theorem~3.10]{GlM21}, where powers of positive operators are considered.

\begin{theorem}
	\label{thm:stability-for-pos-sg-interior-point}
	Let $(X,X^+)$ be an ordered Banach space and suppose that the cone $X^+$ is normal and has non-empty interior. 
	Let $A: X \supseteq \dom{A} \to X$ be densely defined and resolvent positive linear operator.
	
	Then $A$ generates a positive $C_0$-semigroup on $X$, 
	and the following assertions are equivalent:
	\begin{enumerate}[label=\upshape(\roman*)]
		\item\label{thm:stability-for-pos-sg-interior-point:item:stability} 
		\emph{Spectral stability:}
		The spectral bound of $A$ satisfies $\spb(A) < 0$.
		
		\item\label{thm:stability-for-pos-sg-interior-point:item:dual-small-gain} 
		\emph{Dual small-gain condition:}
		For each non-zero $0 \le x' \in \dom{A'}$ we have
		\begin{align*}
			A'x' \not\ge 0.
		\end{align*}
		
		\item\label{thm:stability-for-pos-sg-interior-point:item:small-gain-interior-point-all} 
		\emph{Interior point small-gain condition, first version:}
		For every interior point $z$ of $X^+$ there is a number $\eta > 0$ such that
		\begin{align}
		\label{eq:interior-point-ver1}
			Ax \not\ge -\eta \norm{x} z \qquad \text{for all non-zero} \quad  x \in \dom{A}^+.
		\end{align}
		
		\item\label{thm:stability-for-pos-sg-interior-point:item:small-gain-interior-point-exists} 
		\emph{Interior point small-gain condition, second version:}
		There exists an interior point $z$ of $X^+$ and a number $\eta > 0$ such that
		\begin{align}
		\label{eq:interior-point-ver2}
			Ax \not\ge -\eta \norm{x} z \qquad \text{for all non-zero} \quad x \in \dom{A}^+.
		\end{align}
		
		\item\label{thm:stability-for-pos-sg-interior-point:item:strictly-decreasing-1}
		\emph{Strong decreasing property, first version:}
		There exists an interior point $z$ of $X^+$ belonging to $\dom{A}$ 
		such that $Az \ll 0$.

		\item\label{thm:stability-for-pos-sg-interior-point:item:strictly-decreasing-3}
		\emph{Strong decreasing property, second version:}
		There exists an interior point $z$ of $X^+$ belonging to $\dom{A}$ 
		and a number $\lambda < 0$ such that $Az \le \lambda z$.
		
		\item\label{thm:stability-for-pos-sg-interior-point:item:strongly-stable} 
		\emph{Strong stability:}
		For each $x \in X$ we have $e^{tA}x \to 0$ as $t \to \infty$.
		
		\item\label{thm:stability-for-pos-sg-interior-point:item:weakly-attractive}
		\emph{Weak attractivity on the cone:}
		For each $x \in X^+$ we have $\inf_{t \ge 0} \norm{e^{tA}x} = 0$.
		
		\item\label{thm:stability-for-pos-sg-interior-point:item:uniformly-exp-stable}
		\emph{Uniform exponential stability:}
		The growth bound of the semigroup $(e^{tA})_{t \ge 0}$ satisfies $\omega(A) < 0$.
	\end{enumerate}	
\end{theorem}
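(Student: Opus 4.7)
The plan is to first appeal to a classical theorem of Arendt (see \cite{Arendt1987} or \cite[Theorem~5.3]{ArendtChernoffKato1982}) to conclude that every densely defined resolvent positive operator on an ordered Banach space with normal cone of non-empty interior generates a positive $C_0$-semigroup; this delivers the first assertion of the theorem. The equivalences I would then close by running the cycle \ref{thm:stability-for-pos-sg-interior-point:item:stability} $\Rightarrow$ \ref{thm:stability-for-pos-sg-interior-point:item:strictly-decreasing-1} $\Rightarrow$ \ref{thm:stability-for-pos-sg-interior-point:item:strictly-decreasing-3} $\Rightarrow$ \ref{thm:stability-for-pos-sg-interior-point:item:uniformly-exp-stable} $\Rightarrow$ \ref{thm:stability-for-pos-sg-interior-point:item:strongly-stable} $\Rightarrow$ \ref{thm:stability-for-pos-sg-interior-point:item:weakly-attractive} $\Rightarrow$ \ref{thm:stability-for-pos-sg-interior-point:item:stability} and by separate short arguments placing each of \ref{thm:stability-for-pos-sg-interior-point:item:dual-small-gain}, \ref{thm:stability-for-pos-sg-interior-point:item:small-gain-interior-point-all}, and \ref{thm:stability-for-pos-sg-interior-point:item:small-gain-interior-point-exists} on the equivalence list.

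For the first step, $\spb(A) < 0$ implies $0 \in \resSet(A)$; picking any $y \in \intt(X^+)$, density of $\dom{A}$ forces $\dom{A} \cap \intt(X^+) \ne \emptyset$, so Proposition~\ref{prop:Resolvent-mapping-interior-into-interior} yields $z := \Res(0,A) y \in \intt(X^+) \cap \dom{A}$ with $Az = -y \ll 0$, giving~\ref{thm:stability-for-pos-sg-interior-point:item:strictly-decreasing-1}. From here, writing $Az + w \le 0$ with $w \in \intt(X^+)$ and using the order-unit property of $w$ (Lemma~\ref{lem:properties-interior-points}) to find $\varepsilon > 0$ with $w \ge \varepsilon z$ yields $Az \le -\varepsilon z$, which is~\ref{thm:stability-for-pos-sg-interior-point:item:strictly-decreasing-3}. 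The technical core is the passage to~\ref{thm:stability-for-pos-sg-interior-point:item:uniformly-exp-stable}: for $z \in \intt(X^+) \cap \dom{A}$ with $Az \le \lambda z$ and $\lambda < 0$, I would study $v(t) := e^{-\lambda t} e^{tA} z$ and compute $v'(t) = e^{-\lambda t} e^{tA}(A - \lambda) z \le 0$, using $z \in \dom{A}$, $(A-\lambda) z \le 0$, and positivity of the semigroup; this gives $0 \le e^{tA} z \le e^{\lambda t} z$, hence $\|e^{tA} z\| \le C e^{\lambda t} \|z\|$ by normality. Since $z$ is an order unit, Lemma~\ref{lem:properties-interior-points}\ref{lem:properties-interior-points:itm:unit-ball} provides $\delta > 0$ with $\{x : \|x\| \le \delta\} \subseteq [-z,z]$; thus $-e^{tA} z \le e^{tA} x \le e^{tA} z$ for such $x$, and a second application of normality plus scaling yields $\|e^{tA}\| \le C' e^{\lambda t}$, so $\omega_0(A) \le \lambda < 0$.

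The subsequent implications in the cycle are easy: \ref{thm:stability-for-pos-sg-interior-point:item:uniformly-exp-stable} trivially implies \ref{thm:stability-for-pos-sg-interior-point:item:strongly-stable}, which trivially implies~\ref{thm:stability-for-pos-sg-interior-point:item:weakly-attractive}. To close the cycle I would argue by contradiction: if $\spb(A) \ge 0$, Proposition~\ref{prop:dual-eigenvector} produces a nonzero $z' \in (X')^+ \cap \dom{A'}$ with $A' z' = \spb(A) z'$; differentiating $\langle z', e^{tA} x\rangle$ for $x \in \dom{A}$ gives $\langle z', e^{tA}x\rangle = e^{\spb(A) t}\langle z',x\rangle$, which extends to all of $X$ by density of $\dom{A}$. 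Taking $x \in X^+$ with $\langle z', x\rangle > 0$ (such $x$ exists since $X^+$ is generating and $z' \ne 0$) yields $\|e^{tA}x\| \ge \langle z',x\rangle/\|z'\| > 0$ uniformly in $t$, contradicting~\ref{thm:stability-for-pos-sg-interior-point:item:weakly-attractive}. For the equivalence with~\ref{thm:stability-for-pos-sg-interior-point:item:dual-small-gain}: if $\spb(A) < 0$, then $(A')^{-1} = -\Res(0,A') \le 0$, so $A' x' \ge 0$ with $0 \le x'$ forces $x' = (A')^{-1}(A'x') \le 0$, hence $x' = 0$; conversely, if $\spb(A) \ge 0$ then Proposition~\ref{prop:dual-eigenvector} produces a positive eigenvector violating~\ref{thm:stability-for-pos-sg-interior-point:item:dual-small-gain}.

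Finally I would handle the interior-point small-gain conditions. Applying $\Res(0,A) \ge 0$ to $-Ax \le \eta \|x\| z$ with $0 \le x \in \dom{A}$ gives $0 \le x \le \eta \|x\| \Res(0,A) z$, so by normality $\|x\| \le C \eta \|x\| \|\Res(0,A) z\|$; picking $\eta < 1/(C \|\Res(0,A) z\|)$ rules out nonzero $x$, establishing~\ref{thm:stability-for-pos-sg-interior-point:item:small-gain-interior-point-all}. The step to~\ref{thm:stability-for-pos-sg-interior-point:item:small-gain-interior-point-exists} is trivial, and to return to~\ref{thm:stability-for-pos-sg-interior-point:item:stability} I would assume $\spb(A) \ge 0$ and invoke Lemma~\ref{lem:positive-approximate-eigenvector} to obtain a positive normalized approximate eigenvector $(x_n)$ for $\spb(A)$ with $w_n := (\spb(A)-A)x_n \to 0$; since $z_0$ is an order unit, eventually $-\eta z_0 \le w_n \le \eta z_0$, so $Ax_n = \spb(A) x_n - w_n \ge -\eta z_0 = -\eta \|x_n\| z_0$ contradicts~\ref{thm:stability-for-pos-sg-interior-point:item:small-gain-interior-point-exists}. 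The main obstacle is the comparison argument from~\ref{thm:stability-for-pos-sg-interior-point:item:strictly-decreasing-3} to~\ref{thm:stability-for-pos-sg-interior-point:item:uniformly-exp-stable}, where exponential decay must be transferred from the single interior vector $z$ to the operator norm on all of $X$; here the interplay between the order-unit property of $z$ and the normality of the cone is indispensable.
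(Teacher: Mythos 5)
Your proof is correct, and in several places it takes a genuinely different route from the paper's. The paper converts the pointwise estimate $Az \le \lambda z$ into the semigroup estimate $e^{tA}z \le e^{t\lambda}z$ by iterating the resolvent inequality and invoking Euler's formula $e^{tA}z = \lim_n \big(\tfrac{n}{t}\Res(\tfrac{n}{t},A)\big)^n z$; you instead differentiate $v(t) = e^{-\lambda t}e^{tA}z$ and integrate $v' \le 0$, which is more elementary but implicitly uses the (standard, worth stating) fact that the Riemann integral of a curve with values in the closed convex cone $-X^+$ again lies in $-X^+$. You also close the cycle differently: the paper proves \ref{thm:stability-for-pos-sg-interior-point:item:weakly-attractive} $\Rightarrow$ \ref{thm:stability-for-pos-sg-interior-point:item:uniformly-exp-stable} by a purely order-theoretic operator-norm argument at a single time $t_0$ (getting $\norm{e^{t_0A}} \le 1/2$ from $\norm{e^{t_0A}z}$ small), whereas you go \ref{thm:stability-for-pos-sg-interior-point:item:weakly-attractive} $\Rightarrow$ \ref{thm:stability-for-pos-sg-interior-point:item:stability} via the positive dual eigenvector of Proposition~\ref{prop:dual-eigenvector} and the identity $\langle z', e^{tA}x\rangle = e^{\spb(A)t}\langle z', x\rangle$; both are valid, but the paper's route has the side benefit of deriving uniform exponential stability from weak attractivity without any spectral input. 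Finally, for the interior-point small-gain conditions the paper reduces everything to the uniform small-gain condition of Theorem~\ref{thm:stability-for-pos-sg} by distance estimates, while you argue directly — applying $\Res(0,A) \ge 0$ to $-Ax \le \eta\norm{x}z$ for the forward direction, and using the positive approximate eigenvector of Lemma~\ref{lem:positive-approximate-eigenvector} together with the order-unit property of $z_0$ for the converse; your direct arguments bypass Theorem~\ref{thm:stability-for-pos-sg} entirely (though they still rest on $\spb(A) = \realspb(A) \in \spec(A)$ from Proposition~\ref{prop:properties-of-resolvent-positive-operators}, which is where normality and the generating property of the cone enter). All steps check out.
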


\begin{proof}
	The fact that $A$ generates a positive $C_0$-semigroup
	is shown in \cite[Theorem~5.3]{ArendtChernoffKato1982}.
	Let us now prove that claimed equivalences.
	
	\Implies{thm:stability-for-pos-sg-interior-point:item:stability}{thm:stability-for-pos-sg-interior-point:item:dual-small-gain}
	Since $\spb(A) = \spb(A') < \infty$, it follows from Proposition~\ref{prop:properties-of-resolvent-positive-operators}\ref{prop:properties-of-resolvent-positive-operators:itm:positive-and-decreasing}
	that $\Res(0,A') = \Res(0,A)' \ge 0$. Assume now that $0 \le x' \in \dom{A'}$ and that $A'x' \ge 0$. By applying the positive resolvent $\Res(0,A')$ to this inequality, we obtain $-x' \ge 0$, so $x' \le 0$. Since $x'$ was assumed to be positive, it follows that $x' = 0$.
	
	\Implies{thm:stability-for-pos-sg-interior-point:item:dual-small-gain}{thm:stability-for-pos-sg-interior-point:item:stability}
	Assume that the number $\spb(A) = \spb(A')$ is at least $0$.
	According to Proposition~\ref{prop:dual-eigenvector}, $\spb(A')$ is an eigenvalue of $A'$ with an eigenvector $0 \le z' \in \dom{A'}$. Hence,
	\begin{align*}
		A' z' = \spb(A') z' \ge 0,
	\end{align*}
	which contradicts~\ref{thm:stability-for-pos-sg-interior-point:item:dual-small-gain} since $z'$ is positive and non-zero.
	
	\Implies{thm:stability-for-pos-sg-interior-point:item:stability}%
	{thm:stability-for-pos-sg-interior-point:item:small-gain-interior-point-all} 
	In view of Theorem~\ref{thm:stability-for-pos-sg}\ref{thm:stability-for-pos-sg:itm:uniform-small-gain},
	the condition $\spb(A)<0$ implies the uniform small-gain condition, i.e., there is $\tilde \eta>0$ such that 
	\begin{align}
		\label{eq:Uniform-SGC-tmp}
		\dist(Ax,X^+) \ge \tilde \eta \norm{x} 
		\qquad 
		\text{for all } x \in \dom{A}^+.
	\end{align}
	Let $z$ be an interior point of $X^+$. 
	We show that~\eqref{eq:interior-point-ver1} holds with $\eta := \frac{\tilde \eta}{2\|z\|}$. 
	Indeed, suppose that~\eqref{eq:interior-point-ver1} fails for this $\eta$.
	Then there exists a non-zero vector $x \in \dom{A}^+$ such that 
	$Ax  + \eta \norm{x} z \geq 0$.
	Hence 
	\[
		\dist(Ax,X^+) 
		\leq 
		\norm{ 
			Ax - \big(Ax  + \eta \norm{x} z\big) 
		}  
		= 
		\frac{\tilde \eta}{2} \norm{x}.
	\]
	This together with~\eqref{eq:Uniform-SGC-tmp} can only hold if $x = 0$,
	so we arrived at a contradiction.

	\Implies{thm:stability-for-pos-sg-interior-point:item:small-gain-interior-point-all}{thm:stability-for-pos-sg-interior-point:item:small-gain-interior-point-exists}
	This implication is obvious.
	
	\Implies{thm:stability-for-pos-sg-interior-point:item:small-gain-interior-point-exists}{thm:stability-for-pos-sg-interior-point:item:stability}
	In view of Theorem~\ref{thm:stability-for-pos-sg}\ref{thm:stability-for-pos-sg:itm:uniform-small-gain}, it suffices to show that the uniform small-gain condition holds. 
	So let $z\in\intt(X^+)$ and $\eta > 0$ be as in~\ref{thm:stability-for-pos-sg-interior-point:item:small-gain-interior-point-exists}.
	For the interior point $z$ of $X^+$ pick a corresponding number $c>0$ 
	as in Lemma~\ref{lem:properties-interior-points}\ref{lem:properties-interior-points:itm:unit-ball}.
	We show that the uniform small-gain condition \eqref{eq:Uniform-SGC-tmp} holds with $\tilde \eta := c\eta$.
	Indeed, suppose that this is not the case.
	Then there is $x \in \dom{A}^+$ such that 
	\begin{align*}
		\dist(Ax,X^+) < c \eta \norm{x}.
	\end{align*}
	Thus, $x \not= 0$ and there is $y \in X^+$ such that $  \frac{\left\| Ax - y \right\|}{\eta\|x\|} \le c$. 
	By the choice of $c$ we obtain 
	\[
		Ax \geq Ax - y \ge - \eta \|x\| z, 
	\] 
	which contradicts~\ref{thm:stability-for-pos-sg-interior-point:item:small-gain-interior-point-exists}.
	
	\Implies{thm:stability-for-pos-sg-interior-point:item:stability}{thm:stability-for-pos-sg-interior-point:item:strictly-decreasing-1} 
	This implication follows by Proposition~\ref{prop:spr-super-eigenvector-res-positive}.
	
	\Implies{thm:stability-for-pos-sg-interior-point:item:strictly-decreasing-1}{thm:stability-for-pos-sg-interior-point:item:strictly-decreasing-3} Take $z$ as in item \ref{thm:stability-for-pos-sg-interior-point:item:strictly-decreasing-1}. 
	As $-Az \in\intt(X^+)$, by Lemma~\ref{lem:properties-interior-points} 
	there is $\lambda<0$ such that $-Az \geq -\lambda z$, which implies that $Az \leq \lambda z$.
	
	\Implies{thm:stability-for-pos-sg-interior-point:item:strictly-decreasing-3}{thm:stability-for-pos-sg-interior-point:item:strongly-stable}
	Let $z$ be an interior point of $X^+$ that is an element of $\dom{A}$ 
	and assume that $\lambda z \ge A z$ for a real number $\lambda < 0$. 
	From this estimate, we first derive a corresponding estimate for the resolvent of $A$, 
	and then -- by means of Euler's formula -- an estimate for the semigroup generated by $A$:
	
	For each real number $\mu > \max\{\spb(A),0\}$ the resolvent $\Res(\mu,A)$ is positive
	by Proposition~\ref{prop:properties-of-resolvent-positive-operators}%
	\ref{prop:properties-of-resolvent-positive-operators:itm:positive-and-decreasing}, so we obtain
	\begin{align*}
		\lambda \Res(\mu,A)z \ge \Res(\mu,A)Az = -z + \mu \Res(\mu,A)z,
	\end{align*}
	which implies that
	\begin{align*}
		\frac{\mu}{\mu-\lambda} z \ge \mu \Res(\mu,A)z,
		\qquad \text{and hence} \qquad
		\left(\frac{\mu}{\mu-\lambda}\right)^n z \ge \Big(\mu \Res(\mu,A)\Big)^n z
	\end{align*}
	for each $n \in \N_0$. 
	Now, fix a time $t > 0$. 
	If $n \in \N$ is such large that $n/t > \spb(A)$, 
	then the preceding inequality yields, 
	by substituting $\mu = n/t$,
	\begin{align*}
		\left( 1 - \frac{t\lambda}{n} \right)^{-n} z \ge \Big(\frac{n}{t} \Res\big(\frac{n}{t},A\big)\Big)^n z.
	\end{align*}
	The left hand side converges to $e^{t\lambda}z$ as $n \to \infty$, 
	and the right-hand side converges to $e^{tA}z$ as $n \to \infty$ 
	by Euler's formula for $C_0$-semigroups \cite[Corollary~III.5.5]{EngelNagel2000}.
	
	Hence, $e^{tA}z \le e^{t\lambda}z$ for each $t > 0$. 
	Since $\lambda < 0$, this implies that $e^{tA} z \to 0$ as $t \to \infty$. 
	
	As $z$ is an interior point of $X^+$, 
	there is a non-zero multiple of the unit ball in $X$ 
	which is contained in the order interval $[-z,z]$, see Lemma~\ref{lem:properties-interior-points}. 
	Tshe normality of the cone thus implies 
	that we even have $e^{tA} x \to 0$ as $t \to \infty$ for each $x$ from the unit ball, 
	and by linearity of the semigroup, the same holds for all $x \in X$.
	
	\Implies{thm:stability-for-pos-sg-interior-point:item:strongly-stable}{thm:stability-for-pos-sg-interior-point:item:weakly-attractive}
	This implication is obvious.
	
	\Implies{thm:stability-for-pos-sg-interior-point:item:weakly-attractive}{thm:stability-for-pos-sg-interior-point:item:uniformly-exp-stable}
	Let $z$ be an interior point of $X^+$. 
	By multiplying $z$ with a positive scalar if necessary, 
	we may assume that the unit ball in $X$ is contained in the order interval $[-z,z]$, 
	see Lemma~\ref{lem:properties-interior-points}. 
	Moreover, due to the normality of the cone, 
	there exists a number $C > 0$ for which the inequality~\eqref{eq:normality-estimate} holds. 
	Now, choose a time $t_0 > 0$ such that $\norm{e^{t_0 A} z} \le \frac{1}{2C}$. 
	For each $x$ in the unit ball of $X$ we have by positivity of the semigroup 
	that $e^{t_0 A}x \in [-e^{t_0 A}z, e^{t_0 A}z]$, so
	\begin{align*}
		\norm{e^{t_0 A}x} \le C \norm{e^{t_0 A} z} \le \frac{1}{2}.
	\end{align*}
	Hence, the operator $e^{t_0 A}$ has norm at most $\frac{1}{2}$, 
	which proves that the $n$-th powers of $e^{t_0 A}$ 
	converge to $0$ with respect to the operator norm as $n \to \infty$. 
	Since the $C_0$-semigroup $(e^{tA})_{t \ge 0}$ is operator norm bounded 
	on the compact time interval $[0,t_0]$, 
	we thus obtain $e^{tA} \to 0$ with respect to the operator norm as $t \to \infty$.
	
	\Implies{thm:stability-for-pos-sg-interior-point:item:uniformly-exp-stable}{thm:stability-for-pos-sg-interior-point:item:stability}
	This implication is a consequence of the general fact 
	that the spectral bound of a semigroup generator 
	is dominated by the growth bound of the semigroup \cite[Corollary~II.1.13]{EngelNagel2000}.
\end{proof}

The fact, mentioned in the theorem, 
that a resolvent positive and densely defined linear operator
generates a positive $C_0$-semigroup, 
is a consequence of the normality of the cone and of the existence of an interior point of the positive cone
\cite[Theorem~5.3]{ArendtChernoffKato1982};
this is not true for more general ordered Banach spaces. 

The equivalent condition~\ref{thm:stability-for-pos-sg-interior-point:item:dual-small-gain} 
in Theorem~\ref{thm:stability-for-pos-sg-interior-point} is particularly nice 
since it does not require any kind of uniform estimate 
(in contrast to condition~\ref{thm:stability-for-pos-sg:itm:uniform-small-gain} 
in Theorem~\ref{thm:stability-for-pos-sg}).
The following simple example, which is an adaptation of Example~\ref{exa:shift-non-uniform}\ref{exa:shift-non-uniform:item:differential} above, 
shows that the equivalence of~\ref{thm:stability-for-pos-sg-interior-point:item:stability}
and~\ref{thm:stability-for-pos-sg-interior-point:item:dual-small-gain} 
in Theorem~\ref{thm:stability-for-pos-sg-interior-point} does not hold in general, if $X^+$ has empty interior.

\begin{example}
	\label{exa:shift-empty-interior} 
	Let $(e^{tA})_{t \ge 0}$ denote the right shift semigroup on $X = L^2(\R)$, i.e.\ 
	$e^{tA}f = f(\argument-t)$ for all $f \in L^2(\R)$ and all $t \ge 0$. 
	Here, we endow $L^2(\R)$ with the pointwise almost everywhere order.
	The domain of the generator $A$ is the Sobolev space $H^1(\R)$ and $A$ acts $Af = -f'$ for all $f \in H^1(\R)$. 
	The dual operator $A'$ also has domain $H^1(\R)$ and one has $A'f = f'$ for each $f \in H^1(\R)$. 
	
	So if $0 \le f \in H^1(\R)$ satisfies $f' = A'f \ge 0$, 
	then $f$ is increasing and hence $f=0$ since $f \in L^2(\R)$. 
	This shows that condition~\ref{thm:stability-for-pos-sg-interior-point:item:dual-small-gain} 
	in Theorem~\ref{thm:stability-for-pos-sg-interior-point} is satisfied. 
	However, one has $\spb(A) = 0$ since one can see by a Fourier transform argument that $\sigma(A) = i\R$.
\end{example}

As an immediate consequence of Theorem~\ref{thm:stability-for-pos-sg-interior-point}, 
one re-obtains the following classical result which we already discussed in 
Example~\ref{exas:s_A_equals_omega_A}\ref{exas:s_A_equals_omega_A:item:non-empty-interior} 
since it can also be derived as a consequence of Theorem~\ref{thm:s_A_equals_omega_A}.

\begin{corollary}
	\label{cor:equality-spectral-and-growth-bounds} 
	Let the assumptions of Theorem~\ref{thm:stability-for-pos-sg-interior-point} hold.
	Then $\spb(A)=\omega(A)$.
\end{corollary}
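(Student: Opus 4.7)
The plan is to derive equality from the stability equivalence between~\ref{thm:stability-for-pos-sg-interior-point:item:stability} and~\ref{thm:stability-for-pos-sg-interior-point:item:uniformly-exp-stable} in Theorem~\ref{thm:stability-for-pos-sg-interior-point}, applied to a suitable rescaling of $A$. Since the inequality $\spb(A) \le \omega(A)$ is a general fact (it was already invoked in the proof of the implication \Implies{thm:stability-for-pos-sg-interior-point:item:uniformly-exp-stable}{thm:stability-for-pos-sg-interior-point:item:stability}), only the reverse inequality $\omega(A) \le \spb(A)$ requires argument. Note also that $\spb(A) > -\infty$ by Proposition~\ref{prop:dual-eigenvector}, so the quantity on the right-hand side is a real number.

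To establish $\omega(A) \le \spb(A)$, I would fix an arbitrary real number $\alpha > \spb(A)$ and consider the shifted operator $B := A - \alpha \id$ with $\dom{B} = \dom{A}$. The operator $B$ is again densely defined and closed, and it is resolvent positive since $\Res(\mu, B) = \Res(\mu + \alpha, A)$ is positive for all sufficiently large $\mu$. Its spectral bound is $\spb(B) = \spb(A) - \alpha < 0$, so $B$ satisfies the hypotheses of Theorem~\ref{thm:stability-for-pos-sg-interior-point} together with the spectral stability condition~\ref{thm:stability-for-pos-sg-interior-point:item:stability}.

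By the equivalence \Implies{thm:stability-for-pos-sg-interior-point:item:stability}{thm:stability-for-pos-sg-interior-point:item:uniformly-exp-stable} in the theorem, the $C_0$-semigroup generated by $B$ is uniformly exponentially stable, i.e., has negative growth bound. Since that semigroup equals $(e^{-\alpha t} e^{tA})_{t\ge 0}$, its growth bound is $\omega(A) - \alpha$. Hence $\omega(A) - \alpha < 0$, that is, $\omega(A) < \alpha$. Letting $\alpha \downarrow \spb(A)$ yields $\omega(A) \le \spb(A)$, completing the proof.

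The argument is essentially a bootstrapping of the stability equivalence to each half-plane to the right of $\spb(A)$; no step poses a real obstacle, the only thing to verify is the elementary fact that $B = A - \alpha \id$ inherits dense domain and resolvent positivity from $A$, which is immediate.
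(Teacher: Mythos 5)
Your proof is correct and follows essentially the same route as the paper: both arguments rescale $A$ by a real scalar and invoke the equivalence of items~\ref{thm:stability-for-pos-sg-interior-point:item:stability} and~\ref{thm:stability-for-pos-sg-interior-point:item:uniformly-exp-stable} of Theorem~\ref{thm:stability-for-pos-sg-interior-point} for the shifted operator. The paper phrases this as a contradiction by picking $c$ strictly between $\spb(A)$ and $\omega(A)$, whereas you argue directly by letting $\alpha \downarrow \spb(A)$ — a purely cosmetic difference.
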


\begin{proof}
	Assume for a contradiction that $\spb(A) < \omega(A)$. 
	Then there exists a number $c \in \R$ such that
	\begin{align*}
		\spb(A-c\id) < 0 < \omega(A-c\id).
	\end{align*}
	This contradicts the equivalence of
	items~\ref{thm:stability-for-pos-sg-interior-point:item:stability}
	and~\ref{thm:stability-for-pos-sg-interior-point:item:uniformly-exp-stable} of 
	Theorem~\ref{thm:stability-for-pos-sg-interior-point}
	since the operator $A-c\id$ satisfies the assumptions of the theorem.
\end{proof}

\section{A Krein--Rutman type theorem and its consequences for stability}
\label{section:quasi-compact}

In this section, we consider resolvent positive operators $A$
for which the essential spectral bound is strictly negative;
for such operators we first discuss an analogue of the Krein--Rutman theorem,
and then give a version of the stability result in Theorem~\ref{thm:stability-for-pos-sg};
the advantages if $A$ has strictly negative essential spectral bound, 
are that we obtain a simpler characterization of (spectral) stability
and that, at the same time, we need fewer assumptions on the underlying space.

\subsection{The essential spectrum}
\label{subsection:quasi-compact:the-essential-spectrum}

Let us first recall a few facts about the essential spectrum of unbounded operators.
A bounded linear operator $T$ between two Banach spaces $W$ and $X$ is called a \emph{Fredholm operator} 
if its kernel has finite dimension and its range has finite co-dimension
(and as a consequence of the latter property, the range of $T$ is then automatically closed;
see for instance \cite[Corollary~XI.2.3 on p.\,187]{GohbergGoldbergKaashoek1990}).
Obviously, if $T$ is bijective, then it is Fredholm.
One can prove that the set of all Fredholm operators from $W$ to $X$ is open 
(\cite[Theorem~XI.4.1 on p.\,189]{GohbergGoldbergKaashoek1990}).

Now, let $A: X \supseteq \dom{A} \to X$ be a closed linear operator on a complex Banach space $X$.
Then $\dom{A}$ is a Banach space with respect to the graph norm, 
and a complex number $\lambda$ is said to be in the \emph{essential spectrum} $\specEss(A)$ if
the mapping $\lambda \id - A$ from the Banach space $\dom{A}$ to the Banach space $X$ is not Fredholm
(here, $\id: \dom{A} \to X$ denotes the canonical injection). 
Let us recall a few standard facts about the essential spectrum in the following proposition.

\begin{proposition}
	\label{prop:Essential-spectrum-simple-properties} 
	Let $A: X \supseteq \dom{A} \to X$ be a closed linear operator on a complex Banach space $X$. 
	\begin{enumerate}[label=\upshape(\alph*)]
		\item\label{prop:Essential-spectrum-simple-properties:itm:closed-and-spectral} 
		The essential spectrum of $A$ is closed and it is contained in the spectrum of $A$. 
		
		\item\label{prop:Essential-spectrum-simple-properties:itm:compact-resolvent} 
		If $A$ has non-empty resolvent set and compact resolvent, 
		then $\specEss(A) = \emptyset$.
	\end{enumerate}
\end{proposition}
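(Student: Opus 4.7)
For part~\ref{prop:Essential-spectrum-simple-properties:itm:closed-and-spectral}, the plan is to exploit two standard facts. First, every bijective bounded operator between Banach spaces is Fredholm (with trivial kernel and trivial cokernel). Applied to $\lambda\id - A: \dom{A}\to X$ at any $\lambda$ in the resolvent set, this immediately yields $\resSet(A) \subseteq \C\setminus\specEss(A)$, equivalently $\specEss(A)\subseteq\spec(A)$. Second, I would invoke that the set of Fredholm operators is open in $\calL(\dom{A},X)$ (a classical consequence of the openness of invertibility together with Atkinson's characterisation), combined with the fact that the map $\lambda\mapsto\lambda\id - A$ from $\C$ into $\calL(\dom{A},X)$ is affine and norm-continuous (the inclusion $\dom{A}\hookrightarrow X$ is bounded because the graph norm dominates the $X$-norm). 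The preimage of the open Fredholm set is therefore open in $\C$, and its complement $\specEss(A)$ is closed.

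For part~\ref{prop:Essential-spectrum-simple-properties:itm:compact-resolvent}, I would fix any $\lambda_0$ in the resolvent set and show that $\lambda\id-A:\dom{A}\to X$ is Fredholm for \emph{every} $\lambda\in\C$. The key identity, obtained by writing $\lambda\id - A = (\lambda_0\id - A) + (\lambda-\lambda_0)\id$ and composing on the right with the resolvent, is
\[
(\lambda\id - A)\,\Res(\lambda_0,A) \;=\; \id_X + (\lambda-\lambda_0)\,\Res(\lambda_0,A).
\]
Since $\Res(\lambda_0,A)$ is compact on $X$ by assumption, the right-hand side is a compact perturbation of the identity on $X$ and is therefore Fredholm on $X$. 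Now $\Res(\lambda_0,A):X\to\dom{A}$ is a Banach space isomorphism with inverse $\lambda_0\id - A:\dom{A}\to X$, so $\lambda\id - A$ factors as the Fredholm operator $(\lambda\id-A)\Res(\lambda_0,A)$ on $X$ composed with the isomorphism $(\lambda_0\id-A):\dom{A}\to X$. Composing a Fredholm operator with an isomorphism preserves the Fredholm property, so $\lambda\notin\specEss(A)$, and since $\lambda$ was arbitrary, $\specEss(A)=\emptyset$.

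The only real subtlety -- more a bookkeeping issue than a genuine obstacle -- is keeping track of which \q{identity} is meant at each step: the identity $\id_X$ on $X$ versus the canonical inclusion $\dom{A}\hookrightarrow X$, and the two roles of $\Res(\lambda_0,A)$ as a compact operator on $X$ and as an isomorphism $X\to\dom{A}$ (for the graph norm). Once these are carefully distinguished, both claims reduce to short citations of standard Fredholm theory, and no deeper input is required.
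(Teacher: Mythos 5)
Your proposal is correct and follows essentially the same route as the paper: part (a) via ``bijective $\Rightarrow$ Fredholm'' plus openness of the Fredholm set, and part (b) via the stability of the Fredholm property under compact perturbations. The only cosmetic difference is in (b), where the paper perturbs $\mu\id - A$ additively by $(\lambda-\mu)\id$ after observing that the canonical injection $\dom{A}\to X$ is itself compact, whereas you transfer the problem to $X$ by composing with $\Res(\lambda_0,A)$ and perturb $\id_X$ instead --- both arguments rest on exactly the same two facts from Fredholm theory.
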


\begin{proof}
	\ref{prop:Essential-spectrum-simple-properties:itm:closed-and-spectral}
	The closedness of $\specEss(A)$ follows from the fact that the set of Fredholm operators is open, 
	and the inclusion $\specEss(A) \subseteq \spec(A)$ follows from the fact 
	that every bijective operator is Fredholm.
	
	\ref{prop:Essential-spectrum-simple-properties:itm:compact-resolvent} 
	Fix $\mu \in \resSet(A)$. 
	Then $\mu \id - A$ is bijective from $\dom{A}$ to $X$ and thus a Fredholm operator.
	Moreover, $\id: \dom{A} \to X$ is compact since $A$ has compact resolvent and thus, 
	for every $\lambda \in \C$ the operator
	\begin{align*}
		\lambda \id - A = (\lambda - \mu) \id + (\mu \id - A): \dom{A} \to X
	\end{align*}
	is a compact perturbation of a Fredholm operator and thus also Fredholm
	\cite[Theorem~XI.4.2 on p.\,189]{GohbergGoldbergKaashoek1990}.
\end{proof}

For a closed operator $A: X \supseteq \dom{A} \to X$ we call
\begin{align*}
	\spbEss(A) := \sup\{ \re \lambda: \; \lambda \in \specEss(A) \} \in [-\infty,\infty].
\end{align*}
the \emph{essential spectral bound} of $A$.
Assume that $\spbEss(A) < \infty$ and that the right half plain 
\begin{align*}
	\Omega := \{\lambda \in \C: \; \re \lambda > \spbEss(A) \}
\end{align*}
has non-empty intersection with the resolvent set of $A$. 
Then it follows from so-called \emph{analytic Fredholm theory}
that $A$ has at most countably many spectral values in $\Omega$, 
that all these spectral values are isolated in $\Omega$ 
(though some of them might accumulate at $\partial \Omega$), 
and that all spectral values of $A$ in $\Omega$ are poles of the resolvent of $A$ 
with finite-rank spectral projections.
Indeed, this follows immediately by applying \cite[Corollary~XI.8.4 on p.\,203]{GohbergGoldbergKaashoek1990}
to the operator mapping
\begin{align*}
	W: \Omega \to \calL(\dom{A}; X), 
	\qquad 
	\lambda \mapsto \lambda \id - A.
\end{align*}

\subsection{A Krein--Rutman type theorem}

Let us first recall the Krein--Rutman theorem for positive linear operators.
In one of its rather general versions,
it says the following: 
if $(X, X^+)$ is an ordered Banach space with total cone and $T: X \to X$ 
is a positive and bounded linear operator 
such that the essential spectral radius of $T$ satisfies
$\sprEss(T) < \spr(T)$, 
then $\spr(T)$ is an eigenvalue of $T$ and of the dual operator $T'$,
and both $T$ and $T'$ have a positive eigenvector for this eigenvalue.
The main difficulty in the proof is to show that $\spr(T)$ is in the spectrum of $T$;
this is quite easy if the cone is even assumed to be generating and normal, but it is more involved in the general case.

Still, a variety of different proofs is known for the theorem,
see for instance \cite[Theorem~6.1 on p.\,262]{Krein1950} for Krein and Rutman's classical proof 
based on a perturbation argument, 
\cite[Theorem~1]{ZabreikoSmickih1979} for an argument that reduces the theorem to the finite-dimensional case, 
\cite[Corollary~2.2 on p.\,324]{Nussbaum1981} for a proof based on non-linear arguments,
\cite[Theorem~6.1]{LiJia2021} for an argument based on the (long-time) behaviour of the powers $T^n$,
and \cite[2.4~on p.\,312]{SchaeferWolff1999} for a proof based on Pringsheim's theorem from complex analysis.

A version of the Krein-Rutman theorem for resolvent positive operators reads as follows:

\begin{theorem}[Krein--Rutman for resolvent positive operators]
	\label{thm:krein-rutman-resolvent-positive}
	Let $(X,X^+)$ be an ordered Banach space with total cone 
	and let $A: X \supseteq \dom{A} \to X$ be a resolvent positive linear operator.
	Assume that $\spbEss(A) < \spb(A)$.
	
	Then $\spb(A) < \infty$, 
	the number $\spb(A)$ is an eigenvalue of $A$ and 
	there exists a corresponding eigenvector in $X^+$.
	If $A$ is, in addition, densely defined, 
	then $\spb(A)$ is also an eigenvalue of the dual operator $A'$
	and there exists a corresponding eigenvector in $(X')^+$.
\end{theorem}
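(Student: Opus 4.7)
The plan is to reduce the statement to the discrete-time Krein--Rutman theorem for quasi-compact positive operators established in \cite{GlM21}, applied to the bounded positive operator $B := \Res(\omega, A)$ for a suitably large real $\omega$ in the resolvent set of $A$. The standard spectral-mapping identity $\spec(B) \setminus \{0\} = \{(\omega - \lambda)^{-1} : \lambda \in \spec(A)\}$ and its counterpart for the essential spectrum (both consequences of the analytic Fredholm theory recalled in Subsection~\ref{subsection:quasi-compact:the-essential-spectrum}) give $\spr(B) = 1/\dist(\omega, \spec(A))$ and $\sprEss(B) \le 1/(\omega - \spbEss(A))$. To check $\spr(B) > \sprEss(B)$, I fix any $\lambda_0 \in \spec(A)$ with $\re \lambda_0 > \spbEss(A)$, which exists by hypothesis; a direct computation yields $(\omega - \spbEss(A))^2 - |\omega - \lambda_0|^2 = (\re \lambda_0 - \spbEss(A))(2\omega - \re \lambda_0 - \spbEss(A)) - (\im \lambda_0)^2$, which is positive for $\omega$ sufficiently large, so $\spr(B) \ge 1/|\omega - \lambda_0| > 1/(\omega - \spbEss(A)) \ge \sprEss(B)$.

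The quasi-compact Krein--Rutman theorem then supplies $x \in X^+ \setminus \{0\}$ with $Bx = \spr(B)\,x$. Since $x$ lies in the range of $\Res(\omega, A)$, it belongs to $\dom{A}$, and applying $\omega - A$ to the eigenvalue equation yields $Ax = \lambda^* x$ with $\lambda^* := \omega - 1/\spr(B) \in \spec(A) \cap \R$. To identify $\lambda^* = \spb(A)$, I note that $\spec(A) \cap \R$ is non-empty (it contains $\lambda^*$), closed, and bounded above, so its supremum $\realspb(A)$ lies in $\spec(A) \cap \R$; combining $\lambda^* \le \realspb(A)$ with $\omega - \realspb(A) \ge \dist(\omega, \spec(A)) = \omega - \lambda^*$ forces $\lambda^* = \realspb(A)$. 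If some $\lambda_c \in \spec(A)$ satisfied $\re \lambda_c > \realspb(A)$, the same linear-in-$\omega$ comparison would give $|\omega - \lambda_c| < \omega - \realspb(A)$ for $\omega$ large, contradicting $\dist(\omega, \spec(A)) = \omega - \realspb(A)$. Hence $\spb(A) = \realspb(A) = \lambda^* < \infty$ and $Ax = \spb(A)\,x$.

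For the dual conclusion, if $A$ is densely defined then $A'$ is closed with $\spec(A') = \spec(A)$ and $\Res(\omega, A') = B'$; moreover $B'$ is positive on $(X', (X')^+)$ and shares both spectral and essential spectral radii with $B$. The dual statement of the Krein--Rutman theorem from \cite{GlM21} (proved by a weak-$*$ compactness argument in the spirit of Proposition~\ref{prop:dual-eigenvector} and not requiring $(X')^+$ to be total) produces $x' \in (X')^+ \setminus \{0\}$ with $B'x' = \spr(B)\,x'$, and the same translation gives $A'x' = \spb(A)\,x'$. The main obstacle I anticipate is the strict inequality $\spr(B) > \sprEss(B)$: it relies both on the spectral-mapping description of $\specEss(B)$ under the resolvent map (via analytic Fredholm theory) and on the geometric observation that any spectral value strictly to the right of the line $\re \lambda = \spbEss(A)$ becomes, for $\omega$ real and large, strictly closer to $\omega$ than any point on that line.
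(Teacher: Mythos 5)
Your proof is correct, but it takes a genuinely different route from the paper. You reduce everything to the classical Krein--Rutman theorem for a bounded positive operator $T$ with $\sprEss(T) < \spr(T)$, applied to $B = \Res(\omega,A)$ for large real $\omega$, via the spectral mapping theorem for the resolvent and its essential-spectrum counterpart. This is essentially the strategy of \cite[Corollary~2.9(ii)]{KanigowskiKryszewski2012}, which the paper explicitly mentions and deliberately avoids: the paper instead proves $\spb(A) < \infty$ and $\spb(A) \in \spec(A)$ directly, by observing that $\lambda \mapsto \langle x', \Res(\lambda,A)x\rangle$ is completely monotone on $(0,\infty)$ for positive $x, x'$, invoking Bernstein's representation theorem and analytic continuation to rule out spectral values to the right of $\realspb(A)$, and then extracting the positive eigenvectors from the leading Laurent coefficient of the resolvent at $\spb(A)$ (a positive operator, being a limit of positive resolvents). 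Your route buys brevity given the classical Krein--Rutman theorem as a black box, and -- importantly -- your geometric argument (any spectral value with real part exceeding $\realspb(A)$ would, for large real $\omega$, be strictly closer to $\omega$ than $\realspb(A)$, contradicting $\dist(\omega,\spec(A)) = \omega - \realspb(A) = 1/\spr(B)$) supplies exactly the finiteness and maximality statement $\spb(A) = \realspb(A) < \infty$ that the paper points out is merely \emph{assumed} in the cited reference; so your proposal genuinely closes that gap. What the paper's route buys is independence from the classical Krein--Rutman theorem and from the essential spectral mapping theorem for resolvents, which you use but which is not recorded in Subsection~\ref{subsection:quasi-compact:the-essential-spectrum}; you should either cite it or note the factorization $\mu\id - B = \mu\,\big[(\omega - \mu^{-1})\id - A\big]\Res(\omega,A)$ for $\mu \neq 0$, which shows $\mu\id - B$ is Fredholm on $X$ iff $(\omega-\mu^{-1})\id - A$ is Fredholm from $\dom{A}$ to $X$. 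Two further small points to tidy: the degenerate case $\specEss(A) = \emptyset$, where $1/(\omega - \spbEss(A))$ must be read as $0$ and your quadratic comparison is unnecessary, deserves a sentence; and the version of the Krein--Rutman theorem you need -- including the positive dual eigenvector on a space whose cone is merely total -- should be cited from one of the classical sources listed in Section~\ref{section:quasi-compact} rather than from \cite{GlM21}, which concerns small-gain criteria for positive operators.
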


This theorem can essentially be found in \cite[Corollary~2.9(ii)]{KanigowskiKryszewski2012},
where it was shown by applying the Krein--Rutman theorem for positive operators
to the resolvent of $A$ and invoking a spectral mapping theorem; 
however, the property $\spb(A) < \infty$ is assumed (rather than proved) there, 
and the existence of a positive dual eigenvector is not mentioned.
A very similar result can also be found 
-- though only under the additional assumption that the cone be generating and normal --
in \cite[Proposition~3.10]{Thieme1998}.

It appears worthwhile to give an alternative and more direct proof of 
Theorem~\ref{thm:krein-rutman-resolvent-positive},
which does not rely on the Krein--Rutman theorem for positive operators.
We present such a proof below;
it is based on Bernstein's representation theorem for completely monotone functions.
This argument is close in spirit to the aforementioned proof of the Krein-Rutman theorem
that relies on Pringsheim's theorem.
It is also loosely reminiscent of the proof of \cite[Theorem~4.3]{Mui2023}. 

\begin{proof}[Proof of Theorem~\ref{thm:krein-rutman-resolvent-positive}]
	The first part of the proof is to show that $\spb(A) <\infty$ and $\spb(A) \in \spec(A)$.
	Afterwards, we will derive the remaining assertions from standard spectral theory.
	
	So assume to the contrary that either $\spb(A) = \infty$ 
	or that $\spb(A)$ is $< \infty$ but not in the spectrum.
	As $\realspb(A) \in [-\infty, \infty)$ is either $-\infty$ or in the spectrum of $A$, this means that $\realspb(A)<\spb(A)$.
	Thus, after a translation of $A$ by a real number we may, and will,
	assume that $\spbEss(A) < 0 < \spb(A)$, but that the set $[0,\infty)$ is in the resolvent set of $A$ (i.e., $\realspb(A)<0$).

	So we find a spectral value $\lambda_0$ of $A$ whose real part is strictly positive;
	since $\spbEss(A) < 0$, we know 
	from the properties listed in Subsection~\ref{subsection:quasi-compact:the-essential-spectrum}
	that $\lambda_0$ is an isolated spectral value of $A$  
	and a pole of the resolvent $\Res(\argument,A)$;
	let $k \ge 1$ denote pole order.
	Then the limit $Q_{-k} := \lim_{\lambda \to \lambda_0} (\lambda - \lambda_0)^k \Res(\lambda,A)$ 
	exists with respect to the operator norm,
	and is equal to the $-k$-th coefficient of the Laurent series expansion 
	of the resolvent about $\lambda_0$;
	in particular, $Q_{-k} \not= 0$.
	
	Now fix a vector $0 \le x \in X$ and a functional $0 \le x' \in X'$.
	We will show next that $\langle x', Q_{-k} x\rangle = 0$;
	this yields a contradiction to $Q_{-k} \not= 0$ since the span of $X^+$ is dense in $X$
	and the span of $(X')^+$ is weak${}^*$-dense in $X'$. 
	
	As explained in Subsection~\ref{subsection:quasi-compact:the-essential-spectrum},
	$A$ has at most countably many spectral values with strictly positive real part, 
	and all these spectral values are isolated. 
	Hence, the open set $D := \{\lambda \in \resSet(A): \, \re \lambda > 0\}$ is connected.
	Now consider the holomorphic mapping
	\begin{align*}
		f: D       & \to     \C, \\
		   \lambda & \mapsto \langle x', \Res(\lambda,A) x \rangle.
	\end{align*}
	The restriction of the mapping $f$ to the interval $(0,\infty)$
	is \emph{completely monotone},
	i.e., its derivatives satisfy $(-1)^n f^{(n)}(\lambda) \ge 0$ 
	for all $n \in \N_0$ and all $\lambda \in (0,\infty)$.
	Indeed, for $\lambda \in (0,\infty)$ we have
	\begin{align*}
		(-1)^n f^{(n)}(\lambda)
		= 
		\langle x', (-1)^n \Res^{(n)}(\lambda,A)x \rangle
		=
		\langle x', n! \Res(\lambda,A)^{n+1} x \rangle,
	\end{align*}
	and the operator $\Res(\lambda,A)$ is positive as shown 
	in Proposition~\ref{prop:properties-of-resolvent-positive-operators}\ref{prop:properties-of-resolvent-positive-operators:itm:positive-and-decreasing}.
	
	Since $f|_{(0,\infty)}$ is completely monotone,
	we can apply Bernstein's representation theorem for completely monotone functions
	(see e.g.\ \cite[Theorem~1.4]{SchillingSongVondracek2012}),
	which tells us that $f|_{(0,\infty)}$ 
	is the Laplace transform of a positive measure on $[0,\infty)$ 
	-- more precisely, there exists a positive (and $\sigma$-finite)
	measure $\mu$ on the Borel $\sigma$-algebra on $[0,\infty)$
	such that 
	\begin{align*}
		f(\lambda) = \int_{[0,\infty)} e^{-\lambda t} \dx\mu(t)
	\end{align*}
	for each $\lambda \in (0,\infty)$.
	This readily implies that the integral
	\begin{align*}
		g(\lambda) := \int_{[0,\infty)} e^{-\lambda t} \dx\mu(t)
	\end{align*}
	even converges for every complex number $\lambda$ with real part $> 0$,
	and that $g$ is an analytic function on the right half plane in $\C$.
	
	By the identity theorem for analytic functions,
	$f$ coincides with $g$ on $D$ (due to the connectedness auf $D$)
	and thus, in particular, in a pointed neighbourhood of $\lambda_0$. 
	Therefore,
	\begin{align*}
		\langle x', Q_{-k} x \rangle
		=
		\lim_{\lambda \to \lambda_0} (\lambda - \lambda_0)^k f(\lambda)
		= 
		\lim_{\lambda \to \lambda_0} (\lambda - \lambda_0)^k g(\lambda)
		=
		0,
	\end{align*}
	where the last equality follows from $k \ge 1$ and from the fact that
	$g$ is analytic in $\lambda_0$.
	So we arrived at our desired contradiction and have thus proved that $\spb(A)$ is finite 
	and a spectral value of $A$.
	
	The rest of the proof is now standard spectral theory:
	
	Since $\spbEss(A) < \spb(A)$, the number $\spb(A)$ is a pole of the resolvent
	and hence an eigenvalue of $A$.
	Let $j \ge 1$ denote its pole order and $R_{-j}$ the $-j$-th coefficient
	of the Laurent series expansion of $\Res(\argument,A)$.
	Then $R_{-j}$ is non-zero and all vectors in its range, except for $0$,
	are eigenvectors of $A$ for the eigenvalue $\spb(A)$.
	Since
	\begin{align*}
		R_{-j} 
		= 
		\lim_{\lambda \to \spb(A)} (\lambda - \spb(A))^{j} \Res(\lambda,A)
		= 
		\lim_{\lambda \downarrow \spb(A)} (\lambda - \spb(A))^{j} \Res(\lambda,A),
	\end{align*}
	were the limits exist in operator norm, 
	we conclude from the positivity of the resolvent on the right of $\spb(A)$
	(Proposition~\ref{prop:properties-of-resolvent-positive-operators}\ref{prop:properties-of-resolvent-positive-operators:itm:positive-and-decreasing})
	that the operator $R_{-j}$ is positive.
	
	The span of $X^+$ is dense in $X$ and $R_{-j}$ is non-zero, 
	there exists a vector $x \in X^+$ such $R_{-j}x \not= 0$.
	Hence, $R_{-j}x$ is a positive eigenvector of $A$ for the eigenvalue $\spb(A)$.
	
	Finally, assume in addition that $A$ is densely defined, 
	so that the dual operator $A'$ is well-defined.
	Then $A'$ has the same spectrum as $A$, 
	and $\Res(\lambda,A') = \Res(\lambda,A)'$ for each $\lambda$ in the resolvent set of $A$.
	Hence, $\spb(A)$ is also a pole of the resolvent of $A'$,
	and by repeating the previous argument
	(where we use now that the span of $(X')^+$ if weak${}^*$-dense in $X'$) 
	we also obtain a positive eigenvector of $A'$ for the eigenvalue $\spb(A)$.
\end{proof}

In the next subsections we show general results on how the Krein--Rutman type Theorem~\ref{thm:krein-rutman-resolvent-positive} 
can be used to derive information about the location of the spectral bound.
Before this, we find it worthwhile to demonstrate the use of Theorem~\ref{thm:krein-rutman-resolvent-positive} 
in a simple toy example.

\begin{example}
	Endow the Banach space $C_{\operatorname{per}}([0,1])$ of continuous functions $f$ on $[0,1]$ that satisfy $f(0) = f(1)$ 
	with the pointwise order. 
	Let $(e^{tA})_{t \in [0,\infty)}$ be the periodic right shift semigroup on $X$. 
	Its generator is given by
	\begin{align*}
		\dom{A} & = \{f \in X :\, f \text{ is differentiable and } f' \in X\}, \\
		Af & = -f'.
	\end{align*}
	Let $V: [0,1] \to \R$ be a continuous function that satisfies $V(0) = V(1)$ and 
	denote the bounded linear operator $X \to X$, $f \mapsto Vf$ also by $V$. 
	We will show that $\spb(A+V) = \int_0^1 V(x) \dx x$. 
	
	Note that this is relevant for the stability of the perturbed semigroup $(e^{t(A+V)})_{t \ge 0}$: 
	due to the choice of the space $X$, the growth bound $\omega()A+V$ of this semigroup 
	coincides with $\spb(A+V)$ (see Example~\ref{exas:s_A_equals_omega_A}\ref{exas:s_A_equals_omega_A:item:non-empty-interior}).
	Thus, the semigroup converges to $0$ in operator norm if and only if $\int_0^1 V(x) \dx x < 0$.
\end{example}

\begin{proof}
	First note that $A+V$ generates a positive semigroup 
	and that $A$, and thus $A+V$, has compact resolvent due to the Arzelà--Ascoli theorem. 
	Hence, $\specEss(A+V)$ is empty. 
	
	Set $s := \spb(A+V)$. 
	Since the cone $X_+$ is normal and has non-empty interior, 
	it follows from Proposition~\ref{prop:dual-eigenvector} 
	(or from \cite[Theorem~B-III.1.1]{Nagel1986}) 
	that $s > -\infty$. 
	Hence, $s$ is an eigenvalue of $A+V$ with an eigenvector $f \in X^+$ according to 
	Theorem~\ref{thm:krein-rutman-resolvent-positive}.
	So we have $sf = (A+V)f = -f' + Vf$. 
	Thus,  
	\begin{align*}
		f(x) = \exp\big(-sx + \int_0^x V(y) \dx y \big) f(0)
	\end{align*}
	for all $x \in [0,1]$. 
	Since $f \not= 0$ we conclude that $f(0) \not= 0$. 
	We now substitute $x = 1$ and use that $f(0) = f(1)$. 
	This gives $e^s = \exp\big(\int_0^1 V(y) \dx y\big)$ and hence $s = \int_0^1 V(y) \dx y$, as claimed.
\end{proof}

We note in passing that the proof did not use that fact the the eigenvector $f$ is in $X^+$. 
However, it used the fact that the spectral bound $s$ is indeed in the spectrum (and hence an eigenvalue).

\subsection{Spectral stability for operators with small essential spectrum}

As a consequence of the preceding Theorem~\ref{thm:krein-rutman-resolvent-positive},
we easily obtain the following characterization of spectral stability
for resolvent positive operators.

\begin{corollary}
	\label{cor:stability-for-pos-sg-quasi-compact}
	Let $(X,X^+)$ be an ordered Banach space with total cone and 
	let $A: X \supseteq \dom{A} \to X$ be a resolvent positive linear operator;
	assume that $\spbEss(A) < 0$. 
	Then the following assertions are equivalent:
	\begin{enumerate}[label=\upshape(\roman*)]
		\item\label{cor:stability-for-pos-sg-quasi-compact:itm:stability} 
		\emph{Spectral stability:}
		The spectral bound of $A$ satisfies $\spb(A) < 0$.
		
		\item\label{cor:stability-for-pos-sg-quasi-compact:itm:pos-resolvent}
		\emph{Positive resolvent at $0$:}
		The operator $A$ is invertible and the resolvent $-A^{-1} = \Res(0,A)$ is positive.
		
		\item\label{cor:stability-for-pos-sg-quasi-compact:itm:backwards-positive} 
		\emph{All $0$-super-eigenvectors of $A$ are negative:}
		If $x \in \dom{A}$ satisfies
		\begin{align*}
			Ax \ge 0
		\end{align*}
		then $x \le 0$.
		
		\item\label{cor:stability-for-pos-sg-quasi-compact:itm:small-gain} 
		\emph{Small-gain condition:}
		For each non-zero $0 \le x \in \dom{A}$ we have
		\begin{align*}
			Ax \not\ge 0.
		\end{align*}
	\end{enumerate}
\end{corollary}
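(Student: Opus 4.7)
The plan is to run the cyclic chain \Implies{cor:stability-for-pos-sg-quasi-compact:itm:stability}{cor:stability-for-pos-sg-quasi-compact:itm:pos-resolvent} $\Rightarrow$ \ref{cor:stability-for-pos-sg-quasi-compact:itm:backwards-positive} $\Rightarrow$ \ref{cor:stability-for-pos-sg-quasi-compact:itm:small-gain} $\Rightarrow$ \ref{cor:stability-for-pos-sg-quasi-compact:itm:stability}. The first three implications are essentially mechanical and do not use the essential-spectrum hypothesis; the real work is in closing the loop via \Implies{cor:stability-for-pos-sg-quasi-compact:itm:small-gain}{cor:stability-for-pos-sg-quasi-compact:itm:stability}, which I expect to be the main (and in fact only) genuine step. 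It is here that Theorem~\ref{thm:krein-rutman-resolvent-positive} enters, and the assumption $\spbEss(A) < 0$ is used.

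For \Implies{cor:stability-for-pos-sg-quasi-compact:itm:stability}{cor:stability-for-pos-sg-quasi-compact:itm:pos-resolvent} I simply invoke Proposition~\ref{prop:properties-of-resolvent-positive-operators}\ref{prop:properties-of-resolvent-positive-operators:itm:positive-and-decreasing}: since $\realspb(A)\leq \spb(A) < 0$, the point $0$ lies in the resolvent set of $A$ and $\Res(0,A) = -A^{-1}$ is positive. For \Implies{cor:stability-for-pos-sg-quasi-compact:itm:pos-resolvent}{cor:stability-for-pos-sg-quasi-compact:itm:backwards-positive}, if $x \in \dom{A}$ with $Ax \ge 0$, then $-Ax \le 0$, and applying the positive operator $\Res(0,A) = -A^{-1}$ yields
\begin{align*}
    0 \;\ge\; \Res(0,A)(-Ax) \;=\; A^{-1}(Ax) \;=\; x.
\end{align*}
The implication \Implies{cor:stability-for-pos-sg-quasi-compact:itm:backwards-positive}{cor:stability-for-pos-sg-quasi-compact:itm:small-gain} is immediate: if a non-zero $x \in X^+ \cap \dom{A}$ satisfied $Ax \ge 0$, then \ref{cor:stability-for-pos-sg-quasi-compact:itm:backwards-positive} would force $x \le 0$, contradicting $x \in X^+\setminus\{0\}$.

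The remaining implication \Implies{cor:stability-for-pos-sg-quasi-compact:itm:small-gain}{cor:stability-for-pos-sg-quasi-compact:itm:stability} is the substantive one and proceeds by contradiction. Assume $\spb(A) \ge 0$. Combined with the hypothesis $\spbEss(A) < 0$, this gives $\spbEss(A) < \spb(A)$, so Theorem~\ref{thm:krein-rutman-resolvent-positive} applies: $\spb(A)$ is an eigenvalue of $A$ with a positive eigenvector $0 \le x \in \dom{A}$, $x \ne 0$. But then
\begin{align*}
    Ax \;=\; \spb(A)\, x \;\ge\; 0,
\end{align*}
directly contradicting the small-gain condition~\ref{cor:stability-for-pos-sg-quasi-compact:itm:small-gain}. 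The only delicate point is thus making sure that the Krein--Rutman type result from Theorem~\ref{thm:krein-rutman-resolvent-positive} is applicable here, which it is, since that theorem requires only a total cone and the inequality $\spbEss(A) < \spb(A)$, both of which are in force.
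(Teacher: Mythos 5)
Your proposal is correct and follows essentially the same route as the paper: the cyclic chain \ref{cor:stability-for-pos-sg-quasi-compact:itm:stability} $\Rightarrow$ \ref{cor:stability-for-pos-sg-quasi-compact:itm:pos-resolvent} $\Rightarrow$ \ref{cor:stability-for-pos-sg-quasi-compact:itm:backwards-positive} $\Rightarrow$ \ref{cor:stability-for-pos-sg-quasi-compact:itm:small-gain} $\Rightarrow$ \ref{cor:stability-for-pos-sg-quasi-compact:itm:stability}, with the first three steps read off from Proposition~\ref{prop:properties-of-resolvent-positive-operators}\ref{prop:properties-of-resolvent-positive-operators:itm:positive-and-decreasing} and the last step closing the loop via the Krein--Rutman type Theorem~\ref{thm:krein-rutman-resolvent-positive} exactly as in the paper. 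No gaps.
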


\begin{proof}
	\Implies{cor:stability-for-pos-sg-quasi-compact:itm:stability}{cor:stability-for-pos-sg-quasi-compact:itm:pos-resolvent}
	This is a consequence of 
	Proposition~\ref{prop:properties-of-resolvent-positive-operators}\ref{prop:properties-of-resolvent-positive-operators:itm:positive-and-decreasing}.
	
	\Implies{cor:stability-for-pos-sg-quasi-compact:itm:pos-resolvent}{cor:stability-for-pos-sg-quasi-compact:itm:backwards-positive}
	Since $-A^{-1}$ is positive, it follows from $Ax \ge 0$ that $-x \ge 0$.
	
	\Implies{cor:stability-for-pos-sg-quasi-compact:itm:backwards-positive}{cor:stability-for-pos-sg-quasi-compact:itm:small-gain}
	Let $0 \le x \in \dom{A}$. 
	If $Ax \ge 0$, then it follows 
	from~\ref{cor:stability-for-pos-sg-quasi-compact:itm:backwards-positive}
	that $x \le 0$ and hence, $x = 0$.
	
	\Implies{cor:stability-for-pos-sg-quasi-compact:itm:small-gain}{cor:stability-for-pos-sg-quasi-compact:itm:stability}
	Assume that $\spb(A) \ge 0$.
	Since $\spbEss(A) < 0$, we can then apply Theorem~\ref{thm:krein-rutman-resolvent-positive}
	and conclude that $\spb(A)$ is finite and an eigenvalue of $A$
	with an eigenvector $0 \le x \in \dom{A}$.
	Thus, $Ax = \spb(A)x \ge 0$,
	so~\ref{cor:stability-for-pos-sg-quasi-compact:itm:small-gain} fails.
\end{proof}

Note that Stern's finite-dimensional results from \cite[Theorem~1.4]{Stern1982} can be seen as a special case 
of the equivalence of~\ref{cor:stability-for-pos-sg-quasi-compact:itm:stability} 
and~\ref{cor:stability-for-pos-sg-quasi-compact:itm:small-gain} 
in Corollary~\ref{cor:stability-for-pos-sg-quasi-compact} since the essential spectrum is always empty in finite dimensions.

\subsection{The spectral bound of operators with small essential spectrum}

While Corollary~\ref{cor:stability-for-pos-sg-quasi-compact} contains a characterization
of the property $\spb(A) < 0$, more generally one can instead also give a precise formula for the value of $\spb(A)$
as a consequence of the Krein--Rutman type Theorem~\ref{thm:krein-rutman-resolvent-positive}.
We do so in the following corollary.

\begin{corollary}
	Let $(X,X^+)$ be an ordered Banach space with total cone and 
	let $A: X \supseteq \dom{A} \to X$ be a resolvent positive linear operator;
	assume that $\specEss(A) = \emptyset$ or that $\spbEss(A) < \spb(A)$. 
	Then $\spb(A) < \infty$ and 
	\begin{align*}
		\spb(A) = \sup \{\lambda \in \R: \; \exists 0 \lneq x \in \dom{A} \text{ such that } Ax \ge \lambda x\}.
	\end{align*}
	If $\spb(A) > -\infty$, the supremum is even a maximum.
\end{corollary}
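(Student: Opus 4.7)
Denote by $S$ the supremum on the right-hand side. The plan is to prove $\spb(A) = S$ by establishing the two inequalities separately, and to verify attainment and finiteness along the way.

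For $S \le \spb(A)$, I would show that any $\lambda \in \R$ admitting some $0 \lneq x \in \dom{A}$ with $Ax \ge \lambda x$ satisfies $\lambda \le \spb(A)$. The key observation is that if $\lambda > \spb(A) \ge \realspb(A)$, then by Proposition~\ref{prop:properties-of-resolvent-positive-operators}\ref{prop:properties-of-resolvent-positive-operators:itm:positive-and-decreasing} the resolvent $\Res(\lambda,A)$ is positive; applying it to $(\lambda \id - A)x \le 0$ yields $x \le 0$, which together with $x \ge 0$ forces $x = 0$, contradicting $0 \lneq x$.

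For $\spb(A) \le S$ and attainment, I would split into cases according to whether $\spec(A)$ is empty. If $\spec(A) = \emptyset$, then $\spb(A) = -\infty$ and the previous step shows the set over which $S$ is taken is empty, so $S = -\infty = \spb(A)$ with nothing to attain. Otherwise $\spec(A) \neq \emptyset$, and either hypothesis of the corollary gives $\spbEss(A) < \spb(A)$: in the case $\specEss(A) = \emptyset$ this holds because $\spbEss(A) = -\infty$. The Krein--Rutman type Theorem~\ref{thm:krein-rutman-resolvent-positive} then applies and delivers $\spb(A) < \infty$ together with an eigenvector $0 \lneq x \in \dom{A}$ satisfying $Ax = \spb(A) x$. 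Since $Ax \ge \spb(A) x$, this $x$ witnesses that $\spb(A)$ belongs to the set defining $S$, which simultaneously yields $\spb(A) \le S$ and attainment.

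Combining the two inequalities gives $S = \spb(A)$, and the finiteness $\spb(A) < \infty$ is either trivial (when $\spb(A) = -\infty$) or supplied by Theorem~\ref{thm:krein-rutman-resolvent-positive}. No step presents a substantial obstacle; the only mild subtlety is to isolate the degenerate case $\spec(A) = \emptyset$, for which the hypothesis $\spbEss(A) < \spb(A)$ of the Krein--Rutman theorem is unavailable and must be bypassed by a direct argument.
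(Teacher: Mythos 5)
Your proposal is correct and follows essentially the same route as the paper's proof: the inequality $S \le \spb(A)$ via positivity of $\Res(\lambda,A)$ for $\lambda > \spb(A)$ applied to $(\lambda - A)x \le 0$, and the reverse inequality together with attainment via the positive eigenvector supplied by Theorem~\ref{thm:krein-rutman-resolvent-positive}. Your explicit handling of the degenerate case $\spec(A) = \emptyset$ is equivalent to the paper's dismissal of the case $\spb(A) = -\infty$, and is if anything slightly more careful about why the finiteness claim survives there.
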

\begin{proof}
	In each of the cases $\specEss(A) = \emptyset$ and $\spbEss(A) < \spb(A)$ 
	it follows from Theorem~\ref{thm:krein-rutman-resolvent-positive} that $\spb(A) < \infty$.
	
	``$\ge$'' 
	Let $\lambda \in \R$ and let $0 \lneq x \in \dom{A}$ such that $Ax \ge \lambda x$, 
	i.e.\ $(\lambda - A)x \le 0$. 
	Assume for a contradiction that $\lambda > \spb(A)$.
	Then the resolvent $\Res(\lambda,A)$ is positive according to 
	Proposition~\ref{prop:properties-of-resolvent-positive-operators}%
	\ref{prop:properties-of-resolvent-positive-operators:itm:positive-and-decreasing} 
	and hence, $0 \le x = \Res(\lambda,A) (\lambda-A)x \le 0$, so $x = 0$;
	this contradicts $x \not= 0$.
	So we conclude that $\lambda \le \spb(A)$.
	
	``$\le$'' 
	If $\spb(A) = -\infty$, there is nothing to prove, so assume that $\spb(A) > -\infty$. 
	According to the assumptions we then have $\spb(A) > \spbEss(A)$. Thus, it follows from Theorem~\ref{thm:krein-rutman-resolvent-positive} that $\spb(A)$ 
	is an eigenvalue of $A$ with an eigenvector $0 \lneq u \in \dom{A}$. 
	Hence, $Au = \spb(A)u$, 
	so $\spb(A)$ is an element of the set under the supremum on the right,
	and we thus conclude that $\spb(A)$ is indeed the claimed maximum.
\end{proof}

Another formula for $\spb(A)$ under similar (though slightly stronger) 
assumptions than in the previous corolllary will be given in 
Theorem~\ref{thm:log-compact}.
For self-adjoint semigroups on Hilbert spaces, related results can also be found 
in Theorem~\ref{thm:log-ordered-hilbert} and Corollary~\ref{cor:s-a-hilbert}.

\section{A Collatz--Wielandt formula}
\label{section:collatz-wielandt}

For matrices $0 \le A \in \R^{d \times d}$ (where the inequality is meant entrywise) recall the so-called \emph{Collatz--Wielandt formula}
\begin{align}
	\label{eq:collatz-wielandt-spr-matrix}
	\spr(A) = \max_{0 \lneq x \in \R^d} \min_{j \in \{1, \dots, d\} \atop x_j \not= 0} \frac{(Ax)_j}{x_j}
\end{align}
for the spectral radius $\spr(A)$ of $A$, see for instance \cite[formula~(8.3.3) on p.\,670]{Meyer2000}. 
This can be easily generalized to yield a formula for the spectral bound of matrices whose off-diagonal entries are non-negative (i.e.\ matrices $A$ that satisfy $e^{tA} \ge 0$ for all $t \ge 0$):

\begin{proposition}
	\label{prop:collatz-wielandt-spb-matrix}
	Let $A = (A_{jk})_{j,k=1}^d \in \R^{d \times d}$ and assume that $A_{jk} \ge 0$ for all indices $j \not= k$. 
	Then
	\begin{align*}
		\spb(A) = \max_{0 \lneq x \in \R^d} \min_{j \in \{1, \dots, d\} \atop x_j \not= 0} \frac{(Ax)_j}{x_j}
	\end{align*}
\end{proposition}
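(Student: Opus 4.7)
The plan is to reduce to the classical Collatz--Wielandt formula~\eqref{eq:collatz-wielandt-spr-matrix} for entrywise nonnegative matrices by a simple shift argument. Since the off-diagonal entries of $A$ are nonnegative, there is $c > 0$ (any $c \geq \max_j (-A_{jj})$) such that $B := A + c\id \in \R^{d \times d}$ is entrywise nonnegative. This is the only place where the hypothesis on the off-diagonal entries enters.

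First I would observe that for the nonnegative matrix $B$, the classical Collatz--Wielandt formula yields
\begin{align*}
\spr(B) = \max_{0 \lneq x \in \R^d} \min_{j:\, x_j \neq 0} \frac{(Bx)_j}{x_j}.
\end{align*}
Moreover, by the Perron--Frobenius theorem, $\spr(B)$ is an eigenvalue of $B$, and in particular a real spectral value; combined with the trivial inequality $\spb(B) \leq \spr(B)$, this gives $\spr(B) = \spb(B)$. Since spectra shift under translation, $\spb(B) = \spb(A) + c$.

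Next I would perform the elementary algebraic manipulation
\begin{align*}
\min_{j:\, x_j \neq 0} \frac{(Bx)_j}{x_j} = \min_{j:\, x_j \neq 0} \frac{(Ax)_j + c x_j}{x_j} = c + \min_{j:\, x_j \neq 0} \frac{(Ax)_j}{x_j},
\end{align*}
which is valid for every $0 \lneq x$ (note that $x_j \neq 0$ forces $x_j > 0$, so dividing preserves the sign). Taking the supremum over $0 \lneq x$ and substituting into the Collatz--Wielandt formula for $\spr(B)$ then gives
\begin{align*}
\spb(A) + c = \spr(B) = c + \max_{0 \lneq x \in \R^d} \min_{j:\, x_j \neq 0} \frac{(Ax)_j}{x_j},
\end{align*}
and the claim follows by subtracting $c$ from both sides. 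The attainment of the maximum is inherited from the classical formula (the Perron eigenvector of $B$, which lies in $\R_{\geq 0}^d \setminus \{0\}$, is also the maximizer).

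There is no real obstacle here beyond setting up the shift cleanly; the only subtlety worth being explicit about is that the quantity on the right-hand side is translation-covariant in exactly the same way as $\spb(A)$, which is precisely what allows the reduction to the nonnegative case.
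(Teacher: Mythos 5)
Your proof is correct, and it is precisely the reduction that the paper mentions in the sentence right after the statement (``The proposition can be derived by applying the classical Collatz--Wielandt formula to the matrix $A+c\id$ \dots'') but then deliberately avoids. All the steps check out: the shift $B = A + c\id$ with $c \ge \max_j(-A_{jj})$ is entrywise nonnegative, $\spr(B) = \spb(B)$ because $\spr(B)$ is a (real) eigenvalue by Perron--Frobenius while $\re\lambda \le \lvert\lambda\rvert \le \spr(B)$ for every spectral value, the quantity on the right-hand side is translation-covariant since $x_j \neq 0$ forces $x_j > 0$, and attainment is inherited from the Perron eigenvector of $B$. The paper instead gives a direct two-inequality argument: for ``$\le$'' it evaluates the expression at a nonnegative eigenvector of $A$ for the eigenvalue $\spb(A)$, and for ``$\ge$'' it assumes $\spb(A) < \lambda := \min_{j:\,x_j\neq 0}(Ax)_j/x_j$, deduces $\lambda x \le Ax$ (checking the coordinates with $x_j = 0$ via the sign condition on the off-diagonal entries), and applies the positive resolvent $\Res(\lambda,A)$ to $(\lambda - A)x \le 0$ to get the contradiction $x \le 0$. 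The trade-off is explicit in the paper: your shift argument is shorter and more self-contained given the classical formula, but the resolvent-based argument is the one that survives the passage to unbounded generators in Theorem~\ref{thm:collatz-wieland-inf-dim-no-ess-spectrum}, where no translate of $A$ is a bounded positive operator and the classical formula is unavailable.
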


The proposition can be derived by applying the classical Collatz--Wielandt formula~\eqref{eq:collatz-wielandt-spr-matrix} to the matrix $A+c\id$ for a sufficiently large real number $c$. 
However, we prefer to include a direct proof, since it will serve as a blueprint for an infinite-dimensional generalization of the formula in the sequel.

\begin{proof}[Proof of Proposition~\ref{prop:collatz-wielandt-spb-matrix}]
	Let $m(A)$ denote the maximum on the right-hand side.
	
	``$\le$'' 
	The number $\spb(A)$ is an eigenvalue of $A$ with an eigenvector $0 \lneq y \in \R^d$ 
	(this well-known fact is a finite-dimensional special case of Theorem~\ref{thm:krein-rutman-resolvent-positive}). 
	Hence,
	\begin{align*}
		 \spb(A) = \min_{j \in \{1, \dots, d\} \atop y_j \not= 0} \frac{(Ay)_j}{y_j} \le m(A). 
	\end{align*}
	
	``$\ge$'' 
	Let $0 \lneq x \in \R^d$ and assume for a contradiction that
	\begin{align*}
		\spb(A) < \min_{j \in \{1, \dots, d\} \atop x_j \not= 0} \frac{(Ax)_j}{x_j} =: \lambda.
	\end{align*}
	Note that we have $\lambda x \le Ax$. 
	Indeed, for those indices $j$ which satisfy $x_j \not= 0$, the inequality $\lambda x_j \le (Ax)_j$ follows from the definition of $\lambda$;
	and for those $j$ which satisfy $x_j = 0$, we have $\lambda x_j = 0 \le \sum_{j=1}^n A_{jk} x_k = (Ax)_j$, where the inequality in the middle is true since $A_{jk} \ge 0$ for $k\not=j$.
	
	As $e^{tA} \ge 0$ for all $t \ge 0$ it follows from $\lambda > \spb(A)$ that $\Res(\lambda,A) \ge 0$, 
	see Proposition~\ref{prop:properties-of-resolvent-positive-operators}%
	\ref{prop:properties-of-resolvent-positive-operators:itm:positive-and-decreasing}.
	We apply this to the inequality $(\lambda - A)x \le 0$ and thus obtain $x \le 0$; 
	this is a contradiction since $x \gneq 0$.
\end{proof}

Now we prove an infinite-dimensional version of this Collatz--Wielandt type formula.
It holds, for instance, for operators whose essential spectrum is empty 
(which is, for instance, satisfied for every operator with compact resolvent, 
see Proposition~\ref{prop:Essential-spectrum-simple-properties}(ii)).

Let $(X,X^+)$ be an ordered Banach space, let $A: X \supseteq \dom{A} \to X$ be a closed linear operator, 
and let $n \ge 0$ be an integer. 
We call a continuous functional $x': \dom{A^n} \to \R$ (where \emph{continuous} means continuous with respect to the graph norm on $\dom{A^n}$) \emph{positive} if $\langle x', x \rangle \ge 0$ for all $x \in \dom{A^n} \cap X^+$. 

\begin{definition}
	\label{def:set-determining-positivity} 
	Let $(X,X^+)$ be an ordered Banach space, let $A: X \supseteq \dom{A} \to X$ be a closed linear operator, 
	and let $n \ge 0$ be an integer. 
	We say that a set $F$ of continuous and positive functionals on $\dom{A^n}$ \emph{determines positivity} 
	if the following implication holds for each $x \in \dom{A^n}$:
	\begin{align*}
		\langle x', x \rangle \ge 0 \text{ for all } x' \in F
		\qquad 
		\Rightarrow 
		\qquad 
		x \ge 0.
	\end{align*}
\end{definition}

\begin{remark}
	\label{rem:Positiveaction-SDP} 
	If $F$ is as in the preceding definition and $0 \le x \in \dom{A^n}$ is non-zero, 
	then there exists at least one functional $x' \in F$ such that $\langle x', x \rangle > 0$; 
	for if $\langle x', x \rangle = 0$ for all $x' \in F$, then $x \ge 0$ as well as $-x \ge 0$ and hence $x = 0$.
\end{remark}

If $(X, X^+)$ is an ordered Banach space, then the set of all positive functionals in $X'$ always determines positivity 
(independently of the choice of $A$). 
Let us list a few examples of smaller sets that also determine positivity. 

\begin{examples}
	\begin{enumerate}[label=(\alph*)]
		\item 
		If $L$ is a locally compact Hausdorff space and $X = C_0(L)$, 
		then the set $F = \{\delta_\omega : \, \omega \in L\}$ of all point evaluations determines positivity 
		(no matter the choice of $A$). 
		
		\item 
		If $X = \ell^p$ for $p \in [1,\infty]$, then the set of coordinate functionals $\{e_k : \, k \in \N\}$, 
		where $e_k$ is defined as $\langle e_k, x \rangle := x_k$ for each $x \in X$, determines positivity 
		(again, for any operator $A$). 
		
		\item 
		Let $p \in (1,\infty)$, let $\Omega \subseteq \R^d$ be a bounded domain with, say, $C^\infty$-boundary
		and let $\Delta: L^p(\Omega) \supseteq \dom{\Delta} := W^{2,p}(\Omega) \cap W^{1,p}_0(\Omega) \to L^p(\Omega)$ 
		denote the Dirichlet Laplace operator on $L^p(\Omega)$. 
		For sufficiently large $n \in \N$ the domain $\dom{\Delta^n}$ embeds into the continuous functions $C_0(\Omega)$ 
		that vanish at the boundary of $\Omega$. 
		
		In this case, the set of point evaluations $\{\delta_x: \, x \in \Omega\}$ determines positivity. 
		This is an example where the choice of the operator matters since the point evaluations $\delta_x$ are not well-defined 
		on $L^p(\Omega)$, but only on $\dom{\Delta^n}$ for sufficiently large $n$.
		
		\item 
		Let $H$ be a complex Hilbert space and let $\calL(H)_{\operatorname{sa}}$ denote the (real) Banach space 
		of self-adjoint bounded linear operators on $H$, endowed with the cone of positive semi-definite operators. 
		
		For every $x \in H$, define the functional $\varphi_x: \calL(H)_{\operatorname{sa}} \to \R$ by 
		$\langle \varphi_x, C \rangle := \inner{x}{Cx}$ for all $C \in \calL(H)_{\operatorname{sa}}$. 
		Then the set $\{\varphi_x: \, x \in H\}$ determines positivity (independently of the choice of $A$).
	\end{enumerate}
\end{examples}

The following is our Collatz--Wielandt formula. 
It is reminiscent of the Donsker--Varadhan formula for the principal eigenvalue of second order differential operators 
\cite{DonskerVaradhan1976}; see also \cite{DonskerVaradhan1975}.

\begin{theorem}
	\label{thm:collatz-wieland-inf-dim-no-ess-spectrum}
	Let $A: X \supseteq \dom{A} \to X$ be the generator of a positive $C_0$-semigroup on an ordered Banach space $(X,X^+)$ with total cone $X^+$. 
	Assume that the essential spectrum $\specEss(A)$ is empty or that $\spbEss(A) < \spb(A)$.
	Let $n \ge 0$ be an integer and let $F \not= \emptyset$ be a set of positive and continuous functionals on $\dom{A^n}$ that determines positivity. 
	Then $\spb(A) < \infty$ and the formula
	\begin{align*}
		\spb(A) = \sup_{0 \lneq x \in \dom{A^{n+1}}} \inf_{x' \in F \atop \langle x', x \rangle \not= 0} \frac{\langle x', Ax \rangle }{\langle x', x \rangle}
	\end{align*}
	holds. 
\end{theorem}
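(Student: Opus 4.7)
The plan is to mimic the finite-dimensional proof of Proposition~\ref{prop:collatz-wielandt-spb-matrix}, using Theorem~\ref{thm:krein-rutman-resolvent-positive} to obtain a positive eigenvector at $\spb(A)$ for the $\le$-direction, and positivity of the resolvent on $(\spb(A),\infty)$ for the $\ge$-direction. The delicate new point in infinite dimensions is that the ``off-diagonal non-negativity'' argument from the matrix case (which handled indices $j$ with $x_j=0$) must be replaced by a derivative argument using the positive $C_0$-semigroup.

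First I would observe that, since $A$ generates a positive $C_0$-semigroup, it is resolvent positive, and so Theorem~\ref{thm:krein-rutman-resolvent-positive} applies under either hypothesis on $\specEss(A)$; hence $\spb(A)<\infty$. If $\spb(A)=-\infty$ (which forces $\spec(A)=\emptyset$), the $\le$-inequality reduces to the $\ge$-inequality below, which yields that every candidate in the sup is $-\infty$. Otherwise Theorem~\ref{thm:krein-rutman-resolvent-positive} provides a positive eigenvector $y\in\dom{A}$ with $Ay=\spb(A)y$; iterating shows $y\in\bigcap_{k\ge 1}\dom{A^k}\supseteq\dom{A^{n+1}}$, and by Remark~\ref{rem:Positiveaction-SDP} there exists $x'\in F$ with $\langle x',y\rangle>0$. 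For every such $x'$ one has $\langle x',Ay\rangle/\langle x',y\rangle=\spb(A)$, so $y$ witnesses that the supremum is at least $\spb(A)$.

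For the reverse inequality, fix $0\lneq x\in\dom{A^{n+1}}$ and set
\[
\lambda := \inf_{x'\in F,\ \langle x',x\rangle\neq 0}\frac{\langle x',Ax\rangle}{\langle x',x\rangle}.
\]
Assume for contradiction that $\lambda>\spb(A)$. I would argue that $Ax-\lambda x\ge 0$ and then apply the positive operator $\Res(\lambda,A)$ (positivity on $(\spb(A),\infty)$ by Proposition~\ref{prop:properties-of-resolvent-positive-operators}\ref{prop:properties-of-resolvent-positive-operators:itm:positive-and-decreasing}) to $(A-\lambda)x\le 0$, which yields $-x\ge 0$, contradicting $x\gneq 0$. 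Because $Ax-\lambda x\in\dom{A^n}$ and $F$ determines positivity on $\dom{A^n}$, it suffices to show $\langle x',Ax-\lambda x\rangle\ge 0$ for every $x'\in F$. If $\langle x',x\rangle>0$ this is just the definition of $\lambda$; the case $\langle x',x\rangle=0$ is where the semigroup positivity enters.

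The main obstacle is therefore this last case. The plan is to use the positive $C_0$-semigroup $(T(t))_{t\ge 0}$ generated by $A$: since $x\ge 0$, one has $T(t)x\ge 0$, hence $\langle x',T(t)x-x\rangle=\langle x',T(t)x\rangle\ge 0$ for all $t>0$. Because $x\in\dom{A^{n+1}}$, the orbit lies in $\dom{A^n}$, the identity $A^n\bigl(\frac{T(t)x-x}{t}\bigr)=\frac{T(t)A^nx-A^nx}{t}\to A^{n+1}x$ in $X$ shows that $\frac{T(t)x-x}{t}\to Ax$ in the graph norm of $\dom{A^n}$. Continuity of $x'$ with respect to that norm then gives
\[
\langle x',Ax\rangle = \lim_{t\downarrow 0}\bigl\langle x',\tfrac{T(t)x-x}{t}\bigr\rangle \ge 0,
\]
which closes the argument and establishes the formula.
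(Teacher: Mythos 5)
Your proof is correct and follows essentially the same route as the paper's: Theorem~\ref{thm:krein-rutman-resolvent-positive} supplies the positive eigenvector witnessing the upper bound, and the lower bound is obtained by proving $\lambda x \le Ax$ (using the definition of $\lambda$ when $\langle x',x\rangle>0$ and the positive-semigroup difference quotient in the graph norm of $\dom{A^n}$ when $\langle x',x\rangle=0$) and then applying the positive resolvent $\Res(\lambda,A)$, exactly as in the paper. Only a cosmetic slip: the inclusion should read $\bigcap_{k\ge 1}\dom{A^k}\subseteq\dom{A^{n+1}}$, not $\supseteq$.
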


Note that, in case that $\spbEss(A) = \emptyset$, it may happen that both sides of the equality are equal to $-\infty$.
We remark that Collatz--Wielandt type results for positive linear operators (rather than for generators of positive semigroups) are studied in \cite{Friedland1990, Marek1992, Friedland2020}.

\begin{proof}[Proof of Theorem~\ref{thm:collatz-wieland-inf-dim-no-ess-spectrum}]
	First note that indeed $\spb(A) < \infty$: 
	if $\spbEss(A) < \spb(A)$ this follows directly from Theorem~\ref{thm:krein-rutman-resolvent-positive};
	and if $\specEss(A) = \emptyset$, then $\spbEss(A) = -\infty$, so it also follows from
	Theorem~\ref{thm:krein-rutman-resolvent-positive} that $\spb(A) < \infty$.

	Now let $m(A)$ denote the supremum on the right-hand side. 
	We essentially follow the proof of Proposition~\ref{prop:collatz-wielandt-spb-matrix}.
	
	``$\le$'' 
	If $\spb(A) = -\infty$, this inequality is trivial, so assume that $\spb(A) > -\infty$. 
	Then we have $\spbEss(A) < \spb(A)$ in any case. 
	So according to the Krein--Rutman type Theorem~\ref{thm:krein-rutman-resolvent-positive}, $\spb(A)$ is an eigenvalue of $A$ with an eigenvector $y \in X^+$. 
	Since every eigenvector of $A$ is in $\dom{A^{n+1}}$, it follows that
	\begin{align*}
		\spb(A) = \inf_{x' \in F \atop \langle x', y \rangle \not= 0} \frac{\langle x', Ay \rangle }{\langle x', y \rangle} \le m(A).
	\end{align*}
	Here we used that, as mentioned in Remark~\ref{rem:Positiveaction-SDP}, there exists at least one functional $x' \in F$ such that $\langle x', y \rangle \not= 0$.
	
	``$\ge$'' 
	Assume for a contradiction that there exists $0 \lneq x \in \dom{A^{n+1}}$ such that
	\begin{align*}
		\spb(A) < \inf_{x' \in F \atop \langle x', x \rangle \not= 0} \frac{\langle x', Ax \rangle }{\langle x', x \rangle} =: \lambda.
	\end{align*}
	Then we have $\lambda x \le A x$. 
	Indeed, for those $x' \in F$ that satisfy $\langle x',x \rangle \not= 0$ the inequality $\langle x', \lambda x \rangle \le \langle x', A x \rangle$ follows from the definition of $\lambda$. 
	And for those $x' \in F'$ that satisfy $\langle x', x\rangle=0$ it follows from the positivity of the semigroup $(e^{tA})_{t \ge 0}$ that 
	\begin{align*}
		\langle x', Ax \rangle = \lim_{t \downarrow 0} \frac{\langle x', e^{tA}x \rangle }{t} \ge 0 = \langle x', \lambda x \rangle.
	\end{align*}
	For the first equality we used that $Ax = \lim_{t \downarrow 0} \frac{e^{tA}x - x}{t}$ with respect to the graph norm in $\dom{A^n}$ since $x \in \dom{A^{n+1}}$.
	As $F$ determines positivity, it thus follows that $\lambda x \le Ax$, as claimed.

	By assumption, $\spb(A) < \lambda$.
	Hence, Proposition~\ref{prop:properties-of-resolvent-positive-operators}\ref{prop:properties-of-resolvent-positive-operators:itm:positive-and-decreasing} 
	ensures that the resolvent $\Res(\lambda,A)$ is positive.
	We now apply this operator to the inequality $(\lambda - A)x \le 0$ and thus obtain $x \le 0$, 
	which is a contradiction to $x \gneq 0$.
\end{proof}

Let us describe three more concrete situations where this result can be applied.
The first one is a direct generalization of Proposition~\ref{prop:collatz-wielandt-spb-matrix} to sequence spaces.

\begin{corollary}
	Let $A$ be the generator of a positive $C_0$-semigroup on $\ell^p$ for $1 \le p < \infty$, 
	and assume that $\specEss(A) = \emptyset$ or that $\spbEss(A) < \spb(A)$. 
	Then $\spb(A) < \infty$ and
	\begin{align*}
		\spb(A) = \sup_{0 \lneq x \in \dom{A}} \inf_{j \in \N \atop x_j \not= 0} \frac{(Ax)_j}{x_j}.
	\end{align*}
\end{corollary}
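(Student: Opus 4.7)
The plan is to derive this as a direct specialization of Theorem~\ref{thm:collatz-wieland-inf-dim-no-ess-spectrum}, with $n = 0$ and $F$ taken to be the set of coordinate-evaluation functionals on $\ell^p$. Writing $e_j'(x) := x_j$ for each $j \in \N$, the candidate set $F := \{e_j' : j \in \N\}$ consists of bounded linear functionals on $\ell^p = \dom{A^0}$, each of norm $1$ (viewing $e_j' \in \ell^q$ with $q$ the conjugate exponent, $q = \infty$ if $p = 1$), and each positive since $(\ell^p)^+$ is by definition the set of coordinatewise nonnegative sequences.

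First I would verify that $F$ determines positivity in the sense of Definition~\ref{def:set-determining-positivity}: if $\langle e_j', x \rangle = x_j \ge 0$ for every $j \in \N$, then $x \in (\ell^p)^+$, and this is immediate from the coordinatewise definition of the cone. Next I would note that the cone $(\ell^p)^+$ is total (in fact generating) in $\ell^p$ for $1 \le p < \infty$, so that together with the standing hypothesis $\specEss(A) = \emptyset$ or $\spbEss(A) < \spb(A)$ all assumptions of Theorem~\ref{thm:collatz-wieland-inf-dim-no-ess-spectrum} are in place.

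Applying that theorem with $n = 0$ and inserting $\langle e_j', y \rangle = y_j$ for $y = x$ and $y = Ax$ then yields
\[
	\spb(A) = \sup_{0 \lneq x \in \dom{A}} \inf_{j \in \N \atop x_j \neq 0} \frac{\langle e_j', Ax \rangle}{\langle e_j', x \rangle} = \sup_{0 \lneq x \in \dom{A}} \inf_{j \in \N \atop x_j \neq 0} \frac{(Ax)_j}{x_j},
\]
while the finiteness $\spb(A) < \infty$ is part of the same theorem's conclusion. There is essentially no technical obstacle; the only conceptual step is the identification of the coordinate functionals as a positivity-determining family of continuous positive functionals on $\dom{A^0}$, which is forced by the very definition of the cone of $\ell^p$. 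The corollary is therefore a concrete instance of the general Collatz--Wielandt formula of the previous theorem applied to sequence spaces.
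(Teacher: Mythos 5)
Your proposal is correct and coincides with the paper's own proof: the paper likewise obtains the corollary by applying Theorem~\ref{thm:collatz-wieland-inf-dim-no-ess-spectrum} with $n=0$ and $F$ the set of canonical coordinate functionals, and your verification that these functionals are positive, continuous, and determine positivity is exactly the (routine) content that the paper leaves implicit. Note also that with $n=0$ the supremum in the theorem runs over $\dom{A^{1}} = \dom{A}$, matching the statement, so nothing is missing.
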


\begin{proof}
	This follows directly from Theorem~\ref{thm:collatz-wieland-inf-dim-no-ess-spectrum} 
	if we chose $n=0$ and $F \subseteq (\ell^p)'$ to be the set of all canonical unit vectors.
\end{proof}

Note that, in the situation of the corollary, the inequality $\spb(A) < \infty$ follows 
not only from Theorem~\ref{thm:collatz-wieland-inf-dim-no-ess-spectrum} 
but also more directly from 
Proposition~\ref{prop:properties-of-resolvent-positive-operators}%
\ref{prop:properties-of-resolvent-positive-operators:itm:spectral-bound} 
(as the cone in $\ell^p$ is generating and normal).

The second concrete situation that we discuss are semigroups on spaces of continuous functions.

\begin{corollary}
	Let $A$ be the genereator of a positive $C_0$-semigroup on $C_0(L)$, where $L$ is a locally compact Hausdorff space.
	Assume that $\specEss(A) = \emptyset$ or that $\spbEss(A) < \spb(A)$. 
	Then $\spb(A) < \infty$ and 
	\begin{align*}
		\spb(A) = \sup_{0 \lneq u \in \dom{A}} \inf_{\omega \in L \atop u(\omega) \not= 0} \frac{(Au)(\omega)}{u(\omega)}.
	\end{align*}
\end{corollary}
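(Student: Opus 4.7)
The plan is to specialize Theorem~\ref{thm:collatz-wieland-inf-dim-no-ess-spectrum} to the concrete setting $(X,X^+) = (C_0(L), C_0(L)^+)$, where $C_0(L)^+$ denotes the cone of pointwise nonnegative functions. This cone is generating (write $u = u^+ - u^-$) and hence in particular total, so the hypothesis on the cone in the theorem is met. The finiteness $\spb(A) < \infty$ is immediate from Theorem~\ref{thm:krein-rutman-resolvent-positive} under either of the essential spectrum assumptions, exactly as in the proof of Theorem~\ref{thm:collatz-wieland-inf-dim-no-ess-spectrum}.

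For the formula, I would take $n = 0$ (so that $\dom{A^n} = C_0(L)$ and $\dom{A^{n+1}} = \dom{A}$) and set
\[
F := \{\delta_\omega : \omega \in L\} \subseteq C_0(L)',
\]
where $\delta_\omega(u) := u(\omega)$ is point evaluation. Each $\delta_\omega$ is bounded of norm at most one (hence continuous with respect to the $C_0(L)$-norm), and it is positive since $u \ge 0$ in $C_0(L)$ means $u(\omega) \ge 0$ pointwise. Conversely, if $\delta_\omega(u) = u(\omega) \ge 0$ for every $\omega \in L$, then $u \ge 0$; therefore $F$ determines positivity in the sense of Definition~\ref{def:set-determining-positivity}. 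Moreover $F$ is nonempty (assuming $L$ is nonempty; otherwise $C_0(L) = \{0\}$ and there is nothing to prove).

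With these choices, $\langle \delta_\omega, u \rangle = u(\omega)$ and $\langle \delta_\omega, Au\rangle = (Au)(\omega)$, so the formula supplied by Theorem~\ref{thm:collatz-wieland-inf-dim-no-ess-spectrum} reads
\[
\spb(A) = \sup_{0 \lneq u \in \dom{A}} \; \inf_{\omega \in L,\ u(\omega) \ne 0} \frac{(Au)(\omega)}{u(\omega)},
\]
which is exactly the claim. No real obstacle arises beyond verifying that point evaluations form a set of continuous positive functionals determining positivity; once that is in place, the corollary is just a translation of the abstract theorem to the $C_0(L)$-setting.
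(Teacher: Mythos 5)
Your proposal is correct and matches the paper's own proof, which likewise applies Theorem~\ref{thm:collatz-wieland-inf-dim-no-ess-spectrum} with $n=0$ and $F$ chosen as the set of point evaluation maps on $C_0(L)$. The extra verifications you carry out (positivity and continuity of the $\delta_\omega$, and that they determine positivity) are exactly the details the paper leaves implicit.
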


\begin{proof}
	This follows directly from Theorem~\ref{thm:collatz-wieland-inf-dim-no-ess-spectrum} 
	if we chose $n=0$ and $F \subseteq (C_0(L))'$ to be the set of all point evaluation maps.
\end{proof}

As in the previous corollary, the inequality $\spb(A) < \infty$ can also be directly inferred from 
Proposition~\ref{prop:properties-of-resolvent-positive-operators}%
\ref{prop:properties-of-resolvent-positive-operators:itm:spectral-bound} 
(alternatively to Theorem~\ref{thm:collatz-wieland-inf-dim-no-ess-spectrum})
here, as the cone in $C_0(L)$ is normal and generating.

The third situation that we consider is a more specific example: 
We perturb the Neumann Laplace operator on a bounded interval by a potential 
and give criteria for its spectral bound to be strictly positive, 
even if the potential has a large negative part.

\begin{example}
	\label{exa:neumann-laplace-with-potential}
	Let $\emptyset \not= (a,b) \subseteq \R$ be a bounded and open interval 
	and let $\Delta: L^2(a,b) \supseteq \dom{\Delta} \to L^2(a,b)$ denote the Neumann Laplace operator with domain
	\begin{align*}
		\dom{\Delta} := \{u \in H^2(a,b): \; u'(a) = u'(b) = 0\}
	\end{align*}
	given by $\Delta u = u''$.
	Let $V \in C([a,b])$ and consider the operator $A := \Delta + V$ with domain $\dom{A} := \dom{\Delta}$.
	Then
	\begin{align}
		\label{eq:exa:neumann-laplace-with-potential:s-a-formula}
		\spb(A) = \sup_{0 \lneq u \in \dom{A^2}} \inf_{\omega \in [a,b] \atop u(\omega) \not= 0} \Big(\frac{u''(\omega)}{u(\omega)} + V(\omega)\Big).
	\end{align}
	To see this, first notice that $A$ generates a positive $C_0$-semigroup. 
	Indeed, it is well-known that the Neumann Laplace operator $\Delta$ generates a positive semigroup on $L^2(a,b)$.
	Since $V$ is a bounded perturbation, $A$ also generates a $C_0$-semigroup $(e^{tA})_{t \ge 0}$
	(\cite[Theorem~III.1.3 on p.\,158]{EngelNagel2000})
	and this semigroup is positive since we have
	\begin{align*}
		e^{tA} = e^{-\norm{V}_\infty} e^{t(\Delta+V+\norm{V}_\infty)} \ge 0
	\end{align*}
	for all $t \ge 0$, where the inequality at the end follows from the Dyson--Phillips series expansion
	since the operator $V+\norm{V}_\infty$ is positive
	(see e.g.\ \cite[Proposition~11.6 and Corollary~11.7]{BatkaiKramarRhandi2017} for details).
	
	Since the semigroup generated by $A$ is positive and since $A$ has empty essential spectrum
	(indeed, $A$ has compact resolvents as $\dom{A} \subseteq H^2(a,b)$ embeds compactly into $L^2(a,b)$),
	we can apply Theorem~\ref{thm:collatz-wieland-inf-dim-no-ess-spectrum}:
	in this theorem, choose $n=1$ and let $F$ be the set of all point evaluations at points in $[0,1]$;
	then we immediately obtain the claimed formula~\eqref{eq:exa:neumann-laplace-with-potential:s-a-formula}.
	(Note that it is useful here to have the case $n=1$ available in the theorem: 
	point evaluations are not well-defined on $L^2(0,1)$, 
	but they are well-defined on $\dom{A} = \dom{\Delta} \subseteq H^2(0,1)$.)
	
	In the rest of this example, we will study the following question in two interesting cases, 
	and formula~\eqref{eq:exa:neumann-laplace-with-potential:s-a-formula} will turn out to be useful in the second case:
	
	\begin{center}
		Under which conditions on the potential $V$ \\
		does the operator $A$ have strictly positive spectral bound?
	\end{center}
	
	\textbf{1st case:} $V$ has integral $0$ but $V \not= 0$.
	
	We will show that always $\spb(A) > 0$ in this case.
	This might be slightly surprising at first glance since the Neumann Laplace operator 
	does have spectral bound $0$ -- so due to the assumption that $V$ has integral $0$,
	it does not seem clear at first glance how $V$ influences the spectral bound.
		
	To prove that $\spb(A) > 0$, 
	first note that, as the constant function $\one$ is in the kernel of $\Delta$ and in the domain of $A$, 
	we have $\langle A\one, \one \rangle = \int_a^b V(\omega) \dx \omega = 0$.
	So $0$ is in the numerical range of $A$, 
	and since $A$ is self-adjoint it follows, 
	 for instance from the spectral theorem for self-adjoint operators,
	that $0$ is in the convex hull of $\spec(A)$.
	So $\spb(A) \ge 0$.
	
	Now assume for a contradiction that $\spb(A) = 0$. 
	Then, for instance by considering the spectral expansion of $A$ 
	(for self-adjoint operators with compact resolvent) 
	we see that the equality $\langle A\one, \one \rangle = 0$ implies that $\one$ is an eigenvector of $A$.
	Hence, $0 = A\one = V\one = V$, which is a contradiction since we assumed that $V \not= 0$.
	
	\textbf{2nd case:} 
	$V$ is bounded above and its integral is close to $-\infty$. 
	
	For this case we are going to show the following existence result:
	For the interval $(a,b) = (-\pi/2, \pi/2)$ 
	we can find a $C^\infty$-function $V$ on $[-\pi/2,\pi/2]$
	whose integral is arbitrarily close to $-\infty$ and which satisfies $V \le 3$ and $s(A) \ge 1$.
	
	Again, this is somewhat surprising since one might expect a very negative integral value of $V$
	to draw the spectral bound of $A$ to the left rather than to the right.
	
	A specific class of examples where this occurs can be obtained as follows.
	For the potential $V$, which is yet to be determined, consider the function
	\begin{align*}
		h: \omega \mapsto \frac{2}{\sin^2 \omega} - 4 + V(\omega)
	\end{align*}
	for $\omega \in [-\pi/2, \pi/2] \setminus \{0\}$.
	Close to $0$ the function $\omega \mapsto \frac{2}{\sin^2 \omega}$
	has a non-integrable singularity at $0$ 
	(as it behaves asymptotically like $\frac{2}{\omega^2}$ there).
	
	Hence, we can choose a $C^\infty$-function $V$ on $[-\pi/2, \pi/2]$ 
	which has the following properties:
	(a) $V \le 3$ everywhere;
	(b) $h(\omega) \ge 1$ for all $\omega \not= 0$;
	(c) $V$ has a sharp negative peak close to $0$ and its integral is arbitrarily close to $-\infty$.
	
	In order to estimate $\spb(A)$ for our choice of $V$, 
	we use formula~\eqref{eq:exa:neumann-laplace-with-potential:s-a-formula} with the function
	$u := \sin^2$. 
	By a brief computation one can check that $u \in \dom{\Delta}$ as well as $\Delta u \in \dom{\Delta}$ 
	and $Vu \in \dom{\Delta}$ 
	(since $V$ is smooth and since both $u$ and its derivatives vanish at the boundary of the interval). 
	Hence, formula~\eqref{eq:exa:neumann-laplace-with-potential:s-a-formula} yields
	\begin{align*}
		\spb(A) 
		\ge 
		\inf_{\omega \in [-\pi/2, \pi/2] \atop \omega \not= 0} 
		\Big(
			\underbrace{
				\frac{u''(\omega)}{u(\omega)} + V(\omega)
			}_{= h(\omega)}
		\Big)
		\ge 
		1,
	\end{align*}
	as claimed.
\end{example}

Of course, numerous results about the spectral bound of the operator $\Delta + V$
(or equivalently, about the ground energy of the Schrödinger operator $-\Delta + \tilde V$, with $\tilde V := -V$, 
to use the sign convention that is common in mathematical physics) 
are available in the literature.
For instance, with regard to the first case in the example above, we also point to a result by Simon 
on the whole space $\R$ rather than on bounded intervals \cite[Theorem~2.5]{Simon1976}.

\begin{remark}
	Theorem~\ref{thm:collatz-wieland-inf-dim-no-ess-spectrum} can be generalized to operators which do not generate semigroups, 
	but are densely defined and resolvent positive and satisfy the Hille--Yosida type estimate 
	\begin{align*}
		\limsup_{\lambda \to \infty} \norm{\lambda \Res(\lambda,A)} < \infty
	\end{align*}
	Instead of the formula $Ax = \lim_{t \downarrow 0} \frac{e^{tA} x - x}{t}$ in $\dom{A^n}$ for $x \in \dom{A^{n+1}}$
	one then has to use the subsequent Lemma~\ref{lem:approximation-of-A} in the proof.
	(Note that the lemma can be transferred to vectors $x \in \dom{A^{n+1}}$ and convergence in $\dom{A^n}$ 
	by multiplying with $(\mu - A)^n$ from the right and with $\Res(\mu,A)^n$ from the left, 
	for an arbitrary point $\mu$ in the resolvent set of $A$.)
\end{remark}

\begin{lemma}
	\label{lem:approximation-of-A}
	Let $A: X \supseteq \dom{A} \to X$ be a densely defined linear operator on a Banach space $X$, 
	and assume that there exist real numbers $M \ge 1$ and $\lambda_0$ 
	such that every real number $\lambda > \lambda_0$ is in the resolvent set of $A$ 
	and satisfies $\norm{\lambda \Res(\lambda,A)} \le M$.
	
	Then for every $x \in \dom{A}$ we have $\lambda^2 \Res(\lambda,A) x - \lambda x \to Ax$ 
	with respect to the norm in $X$ as $\lambda \to \infty$.
\end{lemma}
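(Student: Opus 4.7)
The plan is to reduce the claim to the familiar Yosida approximation argument. The starting point is the algebraic identity obtained from $R(\lambda,A)(\lambda \id - A) = \id$ on $\dom{A}$: for any $x \in \dom{A}$ and any $\lambda > \lambda_0$ one has
\begin{align*}
  \lambda \Res(\lambda,A) x - x = \Res(\lambda,A) Ax,
\end{align*}
and multiplying by $\lambda$ yields
\begin{align*}
  \lambda^2 \Res(\lambda,A) x - \lambda x = \lambda \Res(\lambda,A) Ax.
\end{align*}
Hence the lemma is equivalent to the assertion that $\lambda \Res(\lambda,A) y \to y$ in $X$ as $\lambda \to \infty$ for $y := Ax \in X$.

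First I would prove this convergence for $y \in \dom{A}$. Applying the same identity as above with $x$ replaced by $y$, we obtain $\lambda \Res(\lambda,A) y - y = \Res(\lambda,A) Ay$, and the hypothesis $\norm{\lambda \Res(\lambda,A)} \le M$ gives $\norm{\Res(\lambda,A) Ay} \le \frac{M}{\lambda} \norm{Ay} \to 0$ as $\lambda \to \infty$. Thus $\lambda \Res(\lambda,A) y \to y$ for every $y \in \dom{A}$.

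Next I would extend this to an arbitrary $y \in X$ by a standard density argument. Given $\varepsilon > 0$, use that $\dom{A}$ is dense in $X$ to choose $y_0 \in \dom{A}$ with $\norm{y - y_0} < \varepsilon$; then
\begin{align*}
  \norm{\lambda \Res(\lambda,A) y - y}
  \le \norm{\lambda \Res(\lambda,A) (y - y_0)} + \norm{\lambda \Res(\lambda,A) y_0 - y_0} + \norm{y_0 - y},
\end{align*}
and the first and third terms are bounded by $(M+1)\varepsilon$ uniformly in $\lambda$, while the middle term tends to $0$ by the previous step. Letting $\lambda \to \infty$ and then $\varepsilon \to 0$ yields $\lambda \Res(\lambda,A) y \to y$ for every $y \in X$. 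Specializing to $y = Ax$ for $x \in \dom{A}$ and combining with the opening identity gives $\lambda^2 \Res(\lambda,A) x - \lambda x \to Ax$, completing the proof.

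There is no real obstacle here: the argument is entirely standard Hille--Yosida bookkeeping, and the only two ingredients used are the uniform resolvent bound $\norm{\lambda \Res(\lambda,A)} \le M$ and the density of $\dom{A}$, both of which are part of the hypotheses.
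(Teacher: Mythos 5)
Your proof is correct and follows essentially the same route as the paper's: both arguments reduce the claim via the identity $\lambda^2 \Res(\lambda,A)x - \lambda x = \lambda\Res(\lambda,A)Ax$ to the strong convergence $\lambda\Res(\lambda,A) \to \id$ (equivalently, $A\Res(\lambda,A) \to 0$ strongly in the paper's phrasing), which is established first on $\dom{A}$ using the resolvent bound and then on all of $X$ by the same density-plus-uniform-boundedness argument. No issues.
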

\begin{proof} 
	We first note that $A\Res(\lambda,A)$ converges strongly to $0$ on $X$ as $\lambda \to \infty$. 
	Indeed, for $y \in \dom{A}$ one has
	\begin{align*}
		\norm{A \Res(\lambda,A) y} 
		= 
		\norm{\Res(\lambda,A) Ay} 
		\le 
		\frac{M}{\lambda} \norm{Ay}
		\to 0
	\end{align*}
	as $\lambda \to \infty$. 
	Moreover, the operators $A\Res(\lambda,A) \in \calL(X)$ are uniformly bounded as $\lambda \to \infty$, 
	since $A\Res(\lambda,A) = -\id + \lambda \Res(\lambda,A)$ has norm at most $1+M$ for large $\lambda$.
	Hence the claimed strong convergence follows from the density of $\dom{A}$ in $X$.
	
	Moreover, the equality $\lambda\Res(\lambda,A) - \id = A \Res(\lambda,A)$ implies that, for $x \in \dom{A}$,  
	\begin{align*}
		\lambda^2 \Res(\lambda,A) x - \lambda x 
		= 
		\lambda \Res(\lambda,A) Ax
		= 
		Ax + A \Res(\lambda, A) Ax
		\to Ax
	\end{align*}
	as $\lambda \to \infty$, as claimed.
\end{proof}

\section{Logarithmic formulas for the spectral bound}
\label{section:logarithmic}

In this final section, we show how the spectral bound of the generator of a positive semigroup
can, under appropriate assumptions, be computed from a single orbit of the semigroup.
For self-adjoint positive semigroups on $L^2$-spaces 
this was shown in \cite[Theorem~2.1]{KellerLenzVogtWojciechowski2015};
we will generalize this result in Theorem~\ref{thm:log-ordered-hilbert} 
to more general classes of ordered Hilbert spaces, 
and we will give another version of the result in Theorem~\ref{thm:log-compact}
which does not need any Hilbert space structure at all, but requires stronger assumptions 
on the semigroup instead.

In order to state and prove our results, we need to discuss several notions 
that are related to the idea of ``strict (or strong) positivity'' of a vector.
There are several ways to make this concept precise, and we recall them in the following definition.

\begin{definition}
	Let $(X, X^+)$ be an ordered Banach space.
	\begin{enumerate}[label=(\alph*)]
		\item 
		A vector $u \in X^+$ is called a \emph{quasi-interior point} of $X^+$ 
		if the vector subspace $\bigcup_{n \in \N} [-nu, nu]$ of $X$ is dense in $X$.
		
		\item 
		A vector $u \in X^+$ is called an \emph{almost interior point} of $X^+$ 
		if $\langle x', u \rangle > 0$ for every non-zero functional $x' \in (X')^+$.
		
		\item 
		A functional $u' \in (X^+)'$ is called \emph{strictly positive} 
		if $\langle u', x \rangle$ is non-zero for every non-zero vector $x \in X^+$.
	\end{enumerate}
\end{definition}

The notions of quasi-interior and almost interior points are subtle. 
It is easy to see that every quasi-interior point is automatically almost interior, 
and the converse implication is known to be false, in general, see
\cite[p.\,136]{Schaefer1960} and \cite[Section~3.6]{KrasnoselskiiLifshitsSobolev1989}. 
However, in all known counterexamples the positive cone is only total, but not generating.
If the cone is generating (or even more, generating and normal), 
it is, to the best of our knowledge, open whether the concepts of quasi-interior points 
and almost interior points coincide 
(see \cite[Open Problem~2.5]{GlueckWeber2020}). 
However, they are known to coincide in each of the following cases: 
(a) if $(X, X^+)$ is a Banach lattice
\cite[Theorem~II.6.3(a) and~(c)]{Schaefer1974};
(b) if, more generally, $(X, X^+)$ has normal and generating cone and the Riesz decomposition property 
\cite[Theorem~6]{KatsikisPolyrakis2006};
(c) if $(X, X^+)$ is the self-adjoint part of a $C^*$-algebra, 
endowed with the cone of positive semi-definite elements
\cite[Example~2.15(i)]{GlueckWeber2020}; 
(d) if the positive cone $X^+$ has non-empty interior 
-- in this case, a point $u \in X^+$ is an interior point 
if and only if it is a quasi-interior point 
if and only if it is an almost-interior point 
\cite[Corollary~2.8]{GlueckWeber2020}.

The reason why quasi-interior and almost-interior points are useful for our purposes 
are the characterizations in the following two propositions.
Recall that \emph{order intervals} were defined in Subsection~\ref{subsection:ordered-banach-spaces}.

\begin{proposition}
	\label{prop:char-quasi-interior}
	Let $(X,X^+)$ be an ordered Banach space. 
	For $u \in X^+$ the following are equivalent:
	\begin{enumerate}[label=\upshape(\roman*)]
		\item\label{prop:char-quasi-interior:itm:quasi-interior} 
		The vector $u$ is a quasi-interior point of $X^+$.
		
		\item\label{prop:char-quasi-interior:itm:functional-interval} 
		If a functional $x' \in X'$ vanishes on the order interval $[0,u]$, 
		then $x' = 0$. 
		
		\item\label{prop:char-quasi-interior:itm:operator-interval} 
		If $Y$ is a Banach space and a bounded linear operator $T: X \to Y$ 
		vanishes on the order interval $[0,u]$, then $T = 0$.
	\end{enumerate}
\end{proposition}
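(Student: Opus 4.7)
The plan is to prove the three implications cyclically, with \ref{prop:char-quasi-interior:itm:quasi-interior} $\Rightarrow$ \ref{prop:char-quasi-interior:itm:operator-interval} $\Rightarrow$ \ref{prop:char-quasi-interior:itm:functional-interval} using only linearity, homogeneity and density, and the return arrow \ref{prop:char-quasi-interior:itm:functional-interval} $\Rightarrow$ \ref{prop:char-quasi-interior:itm:quasi-interior} using the Hahn--Banach theorem applied to the closure of the set $V := \bigcup_{n\in\N}[-nu,nu]$.

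For \Implies{prop:char-quasi-interior:itm:quasi-interior}{prop:char-quasi-interior:itm:operator-interval}, I would first upgrade the hypothesis $T|_{[0,u]}=0$ to $T|_{[0,nu]}=0$ for all $n$ by rescaling: if $0\le x\le nu$, then $0\le x/n\le u$, so $T(x/n)=0$. Next, for $x\in[-nu,nu]$ the vector $x+nu$ lies in $[0,2nu]$, so $Tx = T(x+nu)-T(nu) = 0$. Hence $T$ vanishes on $V$, and since $V$ is dense in $X$ by \ref{prop:char-quasi-interior:itm:quasi-interior} and $T$ is continuous, it follows that $T=0$. The implication \Implies{prop:char-quasi-interior:itm:operator-interval}{prop:char-quasi-interior:itm:functional-interval} is trivial by taking $Y=\R$.

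For the remaining implication \Implies{prop:char-quasi-interior:itm:functional-interval}{prop:char-quasi-interior:itm:quasi-interior}, I would contrapose. Suppose $u$ is not a quasi-interior point, i.e.\ the closure $\overline{V}$ is a proper subset of $X$. The key observation is that $V$ is in fact a \emph{linear subspace} of $X$: it is closed under addition because $[-nu,nu]+[-mu,mu]\subseteq[-(n+m)u,(n+m)u]$, and it is closed under scalar multiplication because, for $\lambda\in\R$ and $x\in[-nu,nu]$, any integer $m\ge|\lambda|n$ satisfies $\lambda x\in[-mu,mu]$. Consequently $\overline{V}$ is a proper closed linear subspace of $X$, and the Hahn--Banach theorem yields a non-zero functional $x'\in X'$ which vanishes on $\overline{V}$, and hence in particular on $[0,u]$. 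This contradicts \ref{prop:char-quasi-interior:itm:functional-interval}.

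The only mildly non-routine point is the subspace property of $V$; once that is in place, everything else is either rescaling or a direct appeal to Hahn--Banach, so I do not anticipate any serious obstacle.
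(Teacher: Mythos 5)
Your proof is correct and follows essentially the same route as the paper's: the paper's (very short) argument rests on exactly the two facts you establish in detail, namely that $\bigcup_{n\in\N}[-nu,nu]$ coincides with the span of the order interval $[0,u]$ (your rescaling and translation steps) and the Hahn--Banach density criterion. No issues.
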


\begin{proof}
	All equivalences immediately follow from the fact that the span of the order interval $[0,u]$
	is equal to the linear space $U=\bigcup_{n \in \N} [-nu, nu]$, 
	and from the well-known corollary of the Hahn--Banach theorem that $U$ is dense in $X$ if and only if  
	for every functional $x' \in X'$ the condition $\langle x', x \rangle = 0$ for all $x \in U$ implies $x' = 0$.
\end{proof}

While the above proposition is almost trivial, it still sheds some light to the distinction between 
quasi-interior and almost interior points when compared to the following 
characterization of the latter concept; 
the main difference is that, now, one has to restrict the attention to
positive operators and functionals in the equivalent conditions.

\begin{proposition}
	\label{prop:char-almost-interior}
	Let $(X,X^+)$ be an ordered Banach space. 
	For $u \in X^+$ the following are equivalent:
	\begin{enumerate}[label=\upshape(\roman*)]
		\item\label{prop:char-almost-interior:itm:almost-interior} 
		The vector $u$ is an almost interior point of $X^+$.
		
		\item\label{prop:char-almost-interior:itm:functional-interval} 
		If a positive functional $x' \in (X')^+$ vanishes on the order interval $[0,u]$, 
		then $x' = 0$. 
		
		\item\label{prop:char-almost-interior:itm:operator-interval} 
		If $(Y,Y^+)$ is an ordered Banach space and a positive bounded linear operator $T: X \to Y$ 
		vanishes on the order interval $[0,u]$, then $T = 0$.
		
		\item\label{prop:char-almost-interior:itm:operator-point} 
		If $(Y,Y^+)$ is an ordered Banach space and a positive bounded linear operator $T: X \to Y$ 
		satisfies $Tu = 0$, then $T = 0$.
	\end{enumerate}
\end{proposition}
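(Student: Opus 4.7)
The plan is to split the four conditions into two easy pairs, \ref{prop:char-almost-interior:itm:almost-interior}$\Leftrightarrow$\ref{prop:char-almost-interior:itm:functional-interval} and \ref{prop:char-almost-interior:itm:operator-interval}$\Leftrightarrow$\ref{prop:char-almost-interior:itm:operator-point}, and then bridge them. For \ref{prop:char-almost-interior:itm:almost-interior}$\Leftrightarrow$\ref{prop:char-almost-interior:itm:functional-interval}, the key observation is that for $x' \in (X')^+$ the condition $\langle x', u \rangle = 0$ is equivalent to $x'$ vanishing on the whole order interval $[0,u]$, by the sandwich $0 \le \langle x', y\rangle \le \langle x', u\rangle$ for $y \in [0,u]$. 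For \ref{prop:char-almost-interior:itm:operator-interval}$\Leftrightarrow$\ref{prop:char-almost-interior:itm:operator-point}, exactly the same sandwich works at operator level: if $T \ge 0$ and $Tu = 0$, then $0 \le Ty \le Tu = 0$ for all $y \in [0,u]$, while the other direction is trivial since $u \in [0,u]$. The bridge then has two parts: \ref{prop:char-almost-interior:itm:operator-interval}$\Rightarrow$\ref{prop:char-almost-interior:itm:functional-interval} is immediate by taking $Y = \R$ with $Y^+ = [0,\infty)$, and I would close the cycle via \ref{prop:char-almost-interior:itm:almost-interior}$\Rightarrow$\ref{prop:char-almost-interior:itm:operator-point}.

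To prove \ref{prop:char-almost-interior:itm:almost-interior}$\Rightarrow$\ref{prop:char-almost-interior:itm:operator-point}, I would assume $u$ is almost interior and let $T \colon X \to Y$ be positive with $Tu = 0$. For every $y' \in (Y')^+$, the composition $y' \circ T$ lies in $(X')^+$ and annihilates $u$, hence is zero by \ref{prop:char-almost-interior:itm:almost-interior}. This gives $\langle y', Tv \rangle = 0$ for every $v \in X$ and every $y' \in (Y')^+$. Restricting $v$ to $X^+$, the image $Tv$ sits in $Y^+$, and a Hahn--Banach separation argument (separating $-Tv$ from the closed convex cone $Y^+$, using that $Y^+ \cap (-Y^+) = \{0\}$) shows that $(Y')^+$ distinguishes every non-zero element of $Y^+$ from $0$. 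Therefore $Tv = 0$ for all $v \in X^+$.

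The main obstacle is then upgrading $T|_{X^+} = 0$ to $T = 0$ on all of $X$: this requires the linear span of $X^+$ to be dense in $X$, which is not part of the hypotheses on $(X, X^+)$. The saving observation is that this totality is automatically forced by \ref{prop:char-almost-interior:itm:almost-interior}. Indeed, if $Z := \overline{\linSpan}(X^+)$ were a proper closed subspace of $X$, then Hahn--Banach would furnish a non-zero $x' \in X'$ vanishing on $Z$; then both $x'$ and $-x'$ would lie in $(X')^+$ (they annihilate $X^+$), and both would vanish at $u$, contradicting almost-interiority. Thus $X^+$ is total, and continuity of $T$ upgrades $T|_{X^+} = 0$ to $T = 0$, closing the cycle.
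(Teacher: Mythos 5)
Your proof is correct, and it shares the paper's key idea for the hard implication: for a positive $T\colon X \to Y$ with $Tu = 0$, every composition $y' \circ T = T'y'$ with $y' \in (Y')^+$ is a positive functional annihilating $u$, hence vanishes by almost-interiority. The two arguments diverge only in how they finish from there, and in the bookkeeping of the equivalences (the paper runs the single cycle \ref{prop:char-almost-interior:itm:almost-interior}$\Rightarrow$\ref{prop:char-almost-interior:itm:operator-point}$\Rightarrow$\ref{prop:char-almost-interior:itm:operator-interval}$\Rightarrow$\ref{prop:char-almost-interior:itm:functional-interval}$\Rightarrow$\ref{prop:char-almost-interior:itm:almost-interior}, the last step being exactly your sandwich $0 \le \langle x', x\rangle \le \langle x', u\rangle$, while you prove two local equivalences and bridge them). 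For the endgame of \ref{prop:char-almost-interior:itm:almost-interior}$\Rightarrow$\ref{prop:char-almost-interior:itm:operator-point}, the paper stays on the dual side: from $T'y' = 0$ for all $y' \in (Y')^+$ it concludes $T' = 0$, and hence $T = 0$, because the span of $(Y')^+$ is weak${}^*$-dense in $Y'$ (the bipolar-theorem consequence of $Y^+ \cap (-Y^+) = \{0\}$); this makes no reference to the cone of $X$ at all. You instead return to the primal side, showing $Tv = 0$ for $v \in X^+$ by separating $-Tv$ from $Y^+$ --- the same Hahn--Banach input, applied in $Y$ rather than in $Y'$ --- and you then correctly notice that you still need totality of $X^+$, which the proposition does not assume. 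Your observation that almost-interiority of $u$ forces $X^+$ to be total is accurate and closes that gap; it is the one extra step your route requires and the paper's dualization avoids. Both arguments are sound; the paper's is marginally shorter, while yours makes explicit the (true and mildly interesting) fact that condition~\ref{prop:char-almost-interior:itm:almost-interior} can only hold over a total cone, so that the equivalence is not vacuously broken in the non-total case.
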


\begin{proof}
	\Implies{prop:char-almost-interior:itm:almost-interior}{prop:char-almost-interior:itm:operator-point}
	Let $(Y,Y^+)$ and $T$ be as in~\ref{prop:char-almost-interior:itm:operator-point}
	and assume that $Tu = 0$. 
	For every positive $y' \in (Y')^+$ we then have 
	$\langle T' y', u \rangle = \langle y', Tu \rangle = 0$ and thus, 
	as $T'y'$ is a positive functional and $u$ is an almost interior point of $X^+$, 
	we have have $T'y' = 0$. 
	But since $Y^+$ is a cone in $Y$, the span of the positive functionals on $Y$ 
	is weak${}^*$-dense in $Y'$. 
	Thus, $T' y' = 0$ for each $y' \in Y'$, so we conclude that $T' = 0$ and thus $T = 0$.
	
	\Implies{prop:char-almost-interior:itm:operator-point}{prop:char-almost-interior:itm:operator-interval}
	This implication is obvious.
	
	\Implies{prop:char-almost-interior:itm:operator-interval}{prop:char-almost-interior:itm:functional-interval}
	This implication is obvious.
	
	\Implies{prop:char-almost-interior:itm:functional-interval}{prop:char-almost-interior:itm:almost-interior}
	Let $x' \in (X')^+$ be a positive functional such that $\langle x', u \rangle = 0$.
	For every $x \in [0,u]$ it then follows from the positivity of $x'$ that
	\begin{align*}
		0 \le \langle x', x \rangle \le \langle x', u \rangle = 0,
	\end{align*}
	so $\langle x', x \rangle = 0$. 
	According to~\ref{prop:char-almost-interior:itm:functional-interval} 
	this implies that $x' = 0$, 
	so $u$ is indeed an almost interior point of $X^+$.
\end{proof}

Now we can prove the first main result of this section. 
Recall that a $C_0$-semigroup $(e^{tA})_{t \ge 0}$ on a Banach space is called 
\emph{eventually compact} if 
the operator $e^{tA}$ is compact for some $t \ge 0$ (and hence, for all subsequent $t$).
	
\begin{theorem}
	\label{thm:log-compact}
	Let $(e^{tA})_{t \ge 0}$ be a positive and eventually compact $C_0$-semigroup 
	with generator $A$ on an ordered Banach space $(X,X^+)$. 
	If $u \in X^+$ is an almost interior point and $u' \in (X')^+$ is a strictly positive functional,
	then
	\begin{align}
		\label{eq:Formula-for-s(A)}
		\spb(A) = \omega(A) = \lim_{t \to \infty} \frac{\log \langle u', e^{tA} u \rangle}{t}.
	\end{align}
\end{theorem}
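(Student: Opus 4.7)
Write $s := \spb(A)$ and $\phi(t) := \langle u', T(t) u\rangle$, which is non-negative by positivity of the semigroup. The plan is to show $\limsup_{t\to\infty}\log\phi(t)/t \leq s$ and $\liminf_{t\to\infty}\log\phi(t)/t\geq s$ separately. Eventual compactness implies eventual norm-continuity of $(T(t))_{t\geq0}$ and hence the spectrum-determined growth property $\spb(A)=\omega(A)$ (cf.\ the discussion in Subsection~\ref{sec:stability-of-semigroups}). The upper bound then follows at once from $\phi(t)\leq \|u'\|\|u\|\|T(t)\|\leq M_\epsilon \|u'\|\|u\|\, e^{(s+\epsilon)t}$ for arbitrary $\epsilon>0$. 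This also covers the degenerate case $\spec(A)=\emptyset$: then $s=-\infty=\omega(A)$, $\phi$ decays faster than any exponential, and the asserted equality reads $-\infty=-\infty$.

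Assume henceforth $\spec(A)\neq\emptyset$. Eventual compactness forces $\spec(A)$ to consist of isolated eigenvalues of finite algebraic multiplicity, so $\spbEss(A)=-\infty<s$, and Theorem~\ref{thm:krein-rutman-resolvent-positive} applies: $s$ is a pole of $\Res(\argument,A)$ of some finite order $k\geq 1$, and its top Laurent coefficient $R_{-k}:=\lim_{\lambda\downarrow s}(\lambda-s)^k\Res(\lambda,A)$ is a non-zero positive operator whose range consists of eigenvectors of $A$ at $s$ (as shown in the proof of that theorem). Applying Proposition~\ref{prop:char-almost-interior}\ref{prop:char-almost-interior:itm:operator-point} to $R_{-k}$ and the almost interior $u$ yields $R_{-k}u\neq 0$, and strict positivity of $u'$ then gives $\langle u',R_{-k}u\rangle>0$.

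To turn this spectral information into an asymptotic lower bound on $\phi$, I use the Jordan decomposition associated with the peripheral spectrum $\sigma_{\mathrm{per}}:=\spec(A)\cap(s+i\R)$, a finite set by eventual compactness. Writing $s+i\tau_j$ for the elements of $\sigma_{\mathrm{per}}$, $P_j$ for their Riesz projections, $k_j\leq k$ for the pole orders, and $v_j:=\tfrac{1}{(k-1)!}(A-s-i\tau_j)^{k-1}P_j u$ (so $v_j=0$ when $k_j<k$), the Jordan expansion on the finite-dimensional peripheral spectral subspace together with the exponential decay of $T(t)$ on its complement gives
\[
  e^{-st}\,T(t)u \;=\; t^{k-1} F(t) + r(t), \qquad F(t) := \sum_{j:\,k_j=k} e^{i\tau_j t}\,v_j,
\]
with $\|r(t)\|=O(t^{k-2})+O(e^{-\delta t})$ for some $\delta>0$, and constant Fourier coefficient $v_0=R_{-k}u/(k-1)!\neq 0$. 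Since $T(t)u\in X^+$ for every $t\geq 0$, a Bohr almost-period shift argument combined with closedness of $X^+$ forces $F(t)\in X^+$ for every $t\in\R$.

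The crucial step is to show $\inf_{t\in\R}\langle u',F(t)\rangle>0$. The non-zero $v_j$'s lie in eigenspaces of $A$ for the distinct eigenvalues $s+i\tau_j$ and are therefore linearly independent over $\C$; since $|e^{i\tau_j t}|=1$ and $v_0\neq 0$, $F$ never vanishes. Almost-periodicity of $F$ makes $\overline{F(\R)}$ compact, hence a compact subset of $X^+\setminus\{0\}$, on which the strictly positive functional $u'$ attains a strictly positive infimum $c>0$. Consequently $\phi(t)\geq \tfrac{c}{2}\,t^{k-1}e^{st}$ for all $t$ sufficiently large, which yields $\liminf\log\phi(t)/t\geq s$ and completes the proof. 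The main obstacle is precisely this last step: it relies on the interplay of the linear independence of the peripheral eigenvectors $v_j$ (keeping $F$ off $0$), the positivity of the semigroup propagating to $F(t)\in X^+$, and the strict positivity of $u'$ on $X^+\setminus\{0\}$.
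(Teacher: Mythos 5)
Your overall strategy is sound, and it amounts to a more explicit, Fourier-analytic version of the paper's own argument: the paper likewise reduces to the asymptotic decomposition of $e^{-t\omega(A)}T(t)$ along the peripheral spectrum (Engel--Nagel, Corollary~V.3.2), but instead of controlling the full almost-periodic leading term it extracts from an arbitrary sequence $t_n\to\infty$ a subsequence along which $t_{n_r}^{-k}e^{-t_{n_r}\omega(A)}T(t_{n_r})$ converges in operator norm to a positive, non-zero operator $Q$, and then applies Proposition~\ref{prop:char-almost-interior} and the strict positivity of $u'$ to $Q$. Several of your ingredients are correct and worthwhile: the shift argument giving $F(t)\in X^+$, the linear independence of the peripheral eigenvectors, and the compactness of $\overline{F(\R)}$ inside $X^+\setminus\{0\}$.

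There is, however, one genuine gap: you assert without proof that the pole orders $k_j$ at the peripheral spectral values $\spb(A)+i\tau_j$ satisfy $k_j\le k$, where $k$ is the pole order at $\spb(A)$ itself. Your expansion $e^{-st}T(t)u=t^{k-1}F(t)+r(t)$ with $\norm{r(t)}=O(t^{k-2})+O(e^{-\delta t})$, and with $v_0=R_{-k}u/(k-1)!$ appearing as the constant Fourier coefficient of the \emph{leading} term, depends entirely on this domination: if some $k_j>k$, the leading term has degree $k_j-1>k-1$ and its Fourier coefficients come only from non-real peripheral eigenvalues, whose top Laurent coefficients are not positive operators, so neither the almost-interior-point argument nor the non-vanishing of the leading term survives as written. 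Pole-order domination by $s(A)$ is a genuine theorem for positive semigroups on Banach lattices, but Theorem~\ref{thm:log-compact} is stated on an ordered Banach space whose cone need be neither normal nor generating (only total, which does follow from the existence of an almost interior point, so your appeal to Theorem~\ref{thm:krein-rutman-resolvent-positive} is otherwise legitimate), and no such domination result is available in the paper. The gap is repairable: let $m$ be the largest power of $t$ that actually occurs in the peripheral expansion of $T(t)u$; then $m\ge k-1$ because the $t^{k-1}$-coefficient at $\tau=0$ equals $R_{-k}u/(k-1)!\ne 0$, and by maximality of $m$ the leading Fourier coefficients are automatically eigenvectors (hence linearly independent), after which your compactness argument goes through verbatim. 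Alternatively, the paper's subsequence argument sidesteps the issue entirely --- it never needs $\spb(A)\in\spec(A)$ nor any comparison of pole orders.
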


\begin{proof}
	The equality $\spb(A) = \omega(A)$ is true for each eventually compact $C_0$-semigroup 
	on every Banach space, see \cite[Corollary~IV.3.12 on p.\,281]{EngelNagel2000}, 
	so we only need to show the equality on the right.

	Note that for every $\lambda>\omega(A)$ there exist numbers $M,C>0$ such that
	\[
		\limsup_{t \to \infty} \frac{\log \langle u', e^{tA} u \rangle}{t}
		\leq \limsup_{t \to \infty} \frac{\log (\|u'\| Me^{\lambda t} \|u\|)}{t}
		\leq \limsup_{t \to \infty} \frac{C+\lambda t}{t} = \lambda.
	\]
	Thus, we have (for not necessarily eventually compact semigroups) that
	\begin{align}
		\label{eq:Formula-for-s(A)-inequality}
		\omega(A) \geq \limsup_{t \to \infty} \frac{\log \langle u', e^{tA} u \rangle}{t},
	\end{align}
	This implies that if $\omega(A)=-\infty$, 
	then the right hand side in \eqref{eq:Formula-for-s(A)} is equal to $-\infty$ as well. 

	Assume now that $\omega(A) \in \R$. 	
	Let $(t_n) \subseteq [0,\infty)$ be a sequence of times that converges to $\infty$. 
	In view of \eqref{eq:Formula-for-s(A)-inequality}, to prove \eqref{eq:Formula-for-s(A)}
	it suffices to show the existence of a subsequence $(t_{n_r})$ such that 
	\begin{align*}
		\frac{\log \langle u', e^{t_{n_r}A} u \rangle}{t_{n_r}} \to \omega(A).
	\end{align*}
	To this end, we use the eventual compactness of the semigroup: 
	it implies, see \cite[Corollary~V.3.2 on pp.\,330--331]{EngelNagel2000},
	that there exist finitely many numbers $i\beta_1, \dots, i\beta_\ell \in i\R$,
	non-zero bounded linear operators $Q_1, \dots, Q_\ell$,
	and an integer $k \ge 0$, such that
	\begin{align*}
		t^{-k} e^{-t\omega(A)}e^{tA} 
		= 
		\sum_{j=1}^\ell e^{i\beta_j t} \, Q_j  +  S(t)
	\end{align*}
	for all $t > 0$, 
	where the $S(t)$ are bounded linear operators for $t \ge 0$ which satisfy $\norm{S(t)} \to 0$ 
	as $t \to \infty$; 
	moreover, for each $j \in \{1, \dots, \ell\}$ the number $\omega(A) + i\beta_j$ is an isolated spectral value 
	of $A$ and for the associated spectral projections $P_1, \dots, P_\ell$
	we have $P_j Q_h = 0$ for $j \not= h$ and $P_j Q_j = Q_j$ for each $j$.
	
	We can find a subsequence $(t_{n_r})$ of $(t_n)$ such that, for each $j \in \{1, \dots, \ell\}$, 
	the sequence $(e^{i\beta_j t_{n_r}})$ converges to a point $\mu_j$ on the complex unit circle. 
	Thus, 
	\begin{align*}
		t_{n_r}^{-k} e^{-t_{n_r}\omega(A)}e^{t_{n_r}A} 
		\to 
		\sum_{j=1}^\ell \mu_j Q_j 
		=: 
		Q
	\end{align*}
	Clearly, the operator $Q$ is positive as a limit of positive operators.
	Moreover, $Q$ is non-zero since, for instance, $P_1 Q = \mu_1 Q_1 \not= 0$. 
	As $u$ is an almost interior point of $X^+$ it thus follows from 
	Proposition~\ref{prop:char-almost-interior}\ref{prop:char-almost-interior:itm:operator-point} 
	that $Qu \not= 0$;
	and as $u'$ is a strictly positive functional, we thus conclude that $\langle u', Qu \rangle \not= 0$.
	Hence, the limit $\log(\langle u', Qu \rangle)$ of
	\begin{align*}
		\log
		\Big(
			t_{n_r}^{-k} e^{-t_{n_r}\omega(A)} \langle u', e^{t_{n_r}A} u \rangle
		\Big)
		=
		-k \log t_{n_r} - t_{n_r} \omega(A) + \log\langle u', e^{t_{n_r}A} u \rangle 
	\end{align*}
	is a real number (rather than $-\infty$),
	and thus
	\begin{align*}
		\frac{\log\langle u', e^{t_{n_r}A} u \rangle}{t_{n_r}}
	\end{align*}
	converges to $\omega(A)$, as desired.
\end{proof}

Now we come to the second main result of this section. 
By an \emph{ordered Hilbert space} we mean an ordered Banach space $(H, H^+)$ endowed with an inner product that induces 
the norm on $H$. If we identify $H$ with its dual space by means of the Riesz--Fréchet representation theorem, 
the dual cone $(H')^+$ becomes a subset of $H$ and is thus given by
\begin{align*}
	(H')^+ = \{x \in H: \; \inner{x}{y} \ge 0 \text{ for all } y \in H^+\}.
\end{align*}
We use this identification in the formulation of the following theorem.
Note that, if $H = L^2(\Omega,\mu)$ for some measure space $(\Omega,\mu)$ 
is endowed with the pointwise almost everywhere order, 
then $H^+ = (H')^+$, i.e., the cone is \emph{self-dual}. 
Self-dual cones on Hilbert spaces have been studied on various occasions in the literature; 
see for instance the classical paper \cite{Penney1976}.

The following theorem is a generalization of \cite[Theorem~2.1]{KellerLenzVogtWojciechowski2015}, 
where the result was prove for $L^2$-spaces with their usual cone
(see the discussion after the theorem for more details).

\begin{theorem}
	\label{thm:log-ordered-hilbert}
	Let $(e^{tA})_{t \ge 0}$ be a positive and self-adjoint $C_0$-semigroup 
	with generator $A$ on a non-zero ordered Hilbert space $(H,H^+)$, 
	and assume that $H^+ \subseteq (H')^+$.
	Let $u,v \in H^+$ and assume that there exists a quasi-intererior point $w \in H^+$ such that
	$u,v \ge w$.
	Then 
	\begin{align*}
		-\infty < \spb(A) = \omega(A) = \lim_{t \to \infty} \frac{\log \inner{v \,}{\, e^{tA} u}}{t}.
	\end{align*}
\end{theorem}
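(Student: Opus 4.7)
The plan is to reduce the limit formula to a statement about the scalar spectral measure of $w$ relative to the self-adjoint generator $A$. Set $s := \spb(A)$. Since $(T(t))_{t \ge 0}$ is self-adjoint, so is $A$; in particular $\spec(A) \subseteq \R$ is non-empty and the functional calculus gives $\norm{T(t)} = e^{ts}$, so $s > -\infty$ and $s = \omega(A)$. The upper bound
\begin{align*}
	\inner{v}{T(t)u} \le \norm{v}\norm{u}\, e^{ts}
\end{align*}
yields $\limsup_{t\to\infty} t^{-1} \log \inner{v}{T(t)u} \le s$. For the lower bound I split
\begin{align*}
	\inner{v}{T(t)u} = \inner{w}{T(t)w} + \inner{w}{T(t)(u-w)} + \inner{v-w}{T(t)u},
\end{align*}
in which both extra terms are nonnegative because $T(t)$ is positive, because $u-w, v-w \in H^+$, and because $H^+ \subseteq (H')^+$; hence $\inner{v}{T(t)u} \ge \inner{w}{T(t)w}$. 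Thus the theorem reduces to proving $\lim_{t\to\infty} t^{-1} \log \inner{w}{T(t)w} = s$.

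Let $E$ be the spectral measure of $A$ and $\mu_w(B) := \inner{w}{E(B)w}$ the corresponding scalar spectral measure. Then $\inner{w}{T(t)w} = \int_\R e^{t\lambda}\,d\mu_w(\lambda)$, and a standard Laplace-type computation gives $\lim_{t\to\infty} t^{-1} \log \int_\R e^{t\lambda}\,d\mu_w(\lambda) = \sup \operatorname{supp} \mu_w$. Since $\operatorname{supp}\mu_w \subseteq \spec(A) \subseteq (-\infty,s]$, the task is to show $\sup \operatorname{supp} \mu_w = s$. I argue by contradiction: suppose for some $\delta > 0$ one has $Pw = 0$, where $P := E((s-\delta,\infty))$, and note $P \ne 0$ because $s \in \spec(A)$.

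The crux is to propagate $Pw = 0$ to the full order interval, i.e.\ $P|_{[0,w]} = 0$. For $0 \le x \le w$, positivity of $T(t)$ places $T(t)x$ and $T(t)(w-x)$ in $H^+$, and the self-duality hypothesis $H^+ \subseteq (H')^+$ then forces $\inner{T(t)x}{T(t)(w-x)} \ge 0$. Expanding and applying Cauchy--Schwarz yields $\norm{T(t)x} \le \norm{T(t)w}$, and self-adjointness of $T(t/2)$ upgrades this to
\begin{align*}
	\inner{x}{T(t)x} = \norm{T(t/2)x}^2 \le \norm{T(t/2)w}^2 = \inner{w}{T(t)w}.
\end{align*}
Because $\operatorname{supp}\mu_w \subseteq (-\infty,s-\delta]$ by hypothesis, the right-hand side is bounded by $C e^{t(s-\delta)}$; the elementary tail estimate $\mu_x([\lambda',\infty))\,e^{t\lambda'} \le \int e^{t\lambda}\,d\mu_x(\lambda) \le C e^{t(s-\delta)}$ with any $\lambda' > s - \delta$ and $t \to \infty$ then forces $\mu_x((s-\delta,\infty)) = 0$, i.e.\ $Px = 0$. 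Hence $P$ vanishes on all of $[0,w]$, so Proposition~\ref{prop:char-quasi-interior} applied to the quasi-interior point $w$ delivers $P = 0$, contradicting $P \ne 0$.

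The main obstacle is precisely the propagation $Pw = 0 \Rightarrow P|_{[0,w]} = 0$, and it is where all three hypotheses interlock: positivity of $(T(t))$ keeps the order under control on $[0,w]$, the self-duality embedding $H^+ \subseteq (H')^+$ converts order inequalities into inner-product inequalities and thereby yields the norm monotonicity $\norm{T(t)x} \le \norm{T(t)w}$, and self-adjointness (via the spectral theorem) translates that operator-norm bound into a support restriction on the scalar measure $\mu_x$. Removing any of these ingredients breaks the chain.
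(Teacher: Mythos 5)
Your proof is correct, and while it shares the overall skeleton of the paper's argument (the upper bound is immediate from $\omega(A)=\spb(A)$; the lower bound comes from a spectral projection near the top of the spectrum together with the quasi-interior point $w$), the crucial step is handled by a genuinely different mechanism. The paper first reduces $\inner{v}{T(t)u}$ to $\inner{\tilde w}{T(t)\tilde w}$ for a single well-chosen $\tilde w\in[0,w]$ with $\inner{\tilde w}{Q\tilde w}\neq 0$, where $Q$ is the spectral projection onto $[s-\varepsilon,s]$; the existence of $\tilde w$ is supplied by the purely algebraic Proposition~\ref{prop:char-quasi-interior-bilinear} (a polynomial-identity/polarization argument about symmetric bilinear forms), and the semigroup's positivity enters only once, in the inequality $\inner{v}{T(t)u}\ge\inner{\tilde w}{T(t)\tilde w}$. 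You instead reduce directly to $\inner{w}{T(t)w}$ and must show that the scalar spectral measure $\mu_w$ charges every neighbourhood of $s$; for this you prove the order-monotonicity $\norm{T(t)x}\le\norm{T(t)w}$ for $x\in[0,w]$ (which uses positivity of the semigroup together with $H^+\subseteq(H')^+$ in an essential, dynamical way), convert it via self-adjointness into $\inner{x}{T(t)x}\le\inner{w}{T(t)w}$, and use a Laplace tail estimate to propagate the vanishing of $P=E((s-\delta,\infty))$ from $w$ to all of $[0,w]$, after which the \emph{linear} Proposition~\ref{prop:char-quasi-interior} gives $P=0$. Your route avoids the bilinear lemma entirely at the cost of using the semigroup more heavily, and it yields the pleasant byproduct that $\sup\operatorname{supp}\mu_w=\spb(A)$ for every quasi-interior point $w$; the paper's route isolates the order-theoretic content in a reusable lemma and keeps the semigroup part of the proof shorter. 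All the individual steps you use (the three-term splitting of $\inner{v}{T(t)u}$, the Cauchy--Schwarz upgrade, the tail estimate, and the non-vanishing of $P$ because $s\in\spec(A)$) check out.
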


A few comments are in order before we prove the theorem. 
The assumptions that $u,v \ge w$ and that $w$ be a quasi-interior point immediately implies that $u,v$ 
are quasi-interior points, too. 
We do not know if, conversely, for all quasi-interior points $u,v \in H^+$ there always exists a quasi-interior point 
$w \in H^+$ that satisfies $u,v \ge w$.
However, if $H = L^2(\Omega,\mu)$ for a $\sigma$-finite measure space $(\Omega,\mu)$ is endowed its usual order,
then the quasi interior points of $H^+$ are precisely those functions which are strictly positive almost everywhere.
Hence, for two quasi-interior points $u,v$ it then follows that the infimum $w := u \land v$ is also a quasi-interior point, 
and this function clearly satisfies $u,v\ge w$.
The latter observation is the reason why, on $L^2$-spaces, it suffices to assume that $u$ and $v$ are quasi-interior points. 
Hence, the $L^2$-version of the theorem 
in \cite[Theorem~2.1]{KellerLenzVogtWojciechowski2015} has slightly simpler assumptions.

For the proof of the theorem, we need one more ingredient --
namely, a version of Proposition~\ref{prop:char-quasi-interior} for symmetric bilinear mappings:

\begin{proposition}
	\label{prop:char-quasi-interior-bilinear}
	Let $(X,X^+)$ be an ordered Banach space, let $Y$ be a Banach space 
	and let $b: X \times X \to Y$ be a continuous bilinear mapping which is symmetric
	(in the sense that $b(x_1,x_2) = b(x_2,x_1)$ for all $x_1,x_2 \in X$). 
	
	If $b$ is non-zero and $u \in X^+$ is a quasi-interior point, 
	then there exists a point $\tilde u \in [0,u]$ such that $b(\tilde u, \tilde u) \not= 0$.
\end{proposition}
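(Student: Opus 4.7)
My plan is to argue by contradiction: assume that $b(\tilde u,\tilde u)=0$ for every $\tilde u\in[0,u]$ and derive that $b$ must be identically zero.

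\textbf{Step 1 (bilinearity on the order interval).} First I will show that the assumption actually implies that $b$ vanishes on the whole set $[0,u]\times[0,u]$. Indeed, given $x,y\in[0,u]$, the element $\tfrac{x+y}{2}$ again lies in $[0,u]$ because $0\le\tfrac{x+y}{2}\le\tfrac{u+u}{2}=u$. Using the symmetry and bilinearity of $b$,
\begin{align*}
0 \;=\; b\!\left(\tfrac{x+y}{2},\tfrac{x+y}{2}\right) \;=\; \tfrac{1}{4}\bigl(b(x,x)+2b(x,y)+b(y,y)\bigr) \;=\; \tfrac{1}{2}\,b(x,y),
\end{align*}
so $b(x,y)=0$ for all $x,y\in[0,u]$.

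\textbf{Step 2 (separate extension via the quasi-interior property).} Now I fix $x\in[0,u]$ and consider the continuous linear operator $T_x:=b(x,\argument):X\to Y$. By Step 1 it vanishes on $[0,u]$, so Proposition~\ref{prop:char-quasi-interior}\ref{prop:char-quasi-interior:itm:operator-interval} applied to the quasi-interior point $u$ forces $T_x\equiv0$. Consequently $b(x,y)=0$ for every $x\in[0,u]$ and every $y\in X$. Fixing next an arbitrary $y\in X$, the continuous linear operator $S_y:=b(\argument,y):X\to Y$ again vanishes on $[0,u]$, so a second application of Proposition~\ref{prop:char-quasi-interior}\ref{prop:char-quasi-interior:itm:operator-interval} yields $S_y\equiv0$, i.e.\ $b(x,y)=0$ for all $x,y\in X$. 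This contradicts the assumption that $b$ is non-zero.

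I do not foresee any serious obstacle here. The only subtlety is the passage from the \emph{quadratic} vanishing $b(\tilde u,\tilde u)=0$ to the \emph{bilinear} vanishing on $[0,u]\times[0,u]$, which is the reason why one has to use the midpoint $\tfrac{x+y}{2}$ (to stay inside $[0,u]$) rather than a naive polarization based on $x\pm y$. After that, the two successive applications of Proposition~\ref{prop:char-quasi-interior}\ref{prop:char-quasi-interior:itm:operator-interval} (once in each slot of $b$) are routine, and it is essential here that this characterization of quasi-interior points does \emph{not} require the operator to be positive, since $T_x$ and $S_y$ will typically fail to be positive.
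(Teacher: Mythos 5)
Your proof is correct, and it takes a genuinely different route from the paper's. The paper keeps working with the \emph{quadratic} form throughout: it first extends the vanishing of $\tilde u \mapsto b(\tilde u,\tilde u)$ from $[0,u]$ to $\bigcup_{n}[0,nu]$, then to $\bigcup_{n}[-nu,nu]$ via an identity-theorem argument for the polynomial $t \mapsto b(\tilde v - t\tilde w, \tilde v - t\tilde w)$, then invokes the density of $\bigcup_{n}[-nu,nu]$ (i.e.\ the definition of quasi-interior point) and continuity to get $b(x,x)=0$ on all of $X$, and only polarizes at the very end. You instead polarize immediately \emph{inside} $[0,u]$, exploiting its convexity via the midpoint $\tfrac{x+y}{2}$ (correctly avoiding the naive $x\pm y$ polarization, which would leave the order interval), and then delegate the density argument entirely to Proposition~\ref{prop:char-quasi-interior}\ref{prop:char-quasi-interior:itm:operator-interval}, applied once in each slot of $b$. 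Your observation that this characterization imposes no positivity requirement on the operator is exactly the point that makes the two applications to $T_x$ and $S_y$ legitimate. The net effect is that you avoid the polynomial identity theorem and the explicit density/continuity step, at the cost of routing everything through the operator-interval characterization; the paper's version is more self-contained in that it only uses the raw definition of quasi-interior point. Both arguments are complete and correct.
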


\begin{proof}
	Assume that $b(\tilde u, \tilde u) = 0$ for all $\tilde u \in [0,u]$. 
	It immediately follows that $b(\tilde u, \tilde u) = 0$ for all $\tilde u \in C := \bigcup_{n \in \N} [0, nu]$.
	Next we show that the same is true even for all $\tilde u \in \bigcup_{n \in \N} [-nu,nu]$.
	So fix such a $\tilde u$; we can write it as $\tilde u = \tilde v - \tilde w$ 
	for some $\tilde v, \tilde w \in C$.
	The real polynomial function
	\begin{align*}
		t \mapsto b(\tilde v - t\tilde w, \tilde v - t \tilde w)
	\end{align*}
	vanishes for $t \in (-\infty,0]$ since $\tilde v - t \tilde w \in C$ for all these $t$. 
	By the identity theorem for polynomials, we conclude that 
	the polynomial is identically $0$, so in particular for $t = 1$ we obtain, $b(\tilde u, \tilde u) = 0$.
	
	As $u$ is a quasi-interior point of $X_+$ the set $\bigcup_{n \in \N} [-nu,nu]$ is dense in $X$. 
	Due to the continuity of $b$ it thus follows that $b(x,x) = 0$ for all $x \in X$. 
	Finally, as $b$ is symmetric, it follows from the polarization identity
	\[
		2b(x_1,x_2) = b(x_1+x_2,x_1+x_2)-b(x_2,x_2)-b(x_1,x_1),
	\]	
	that $b(x_1,x_2) = 0$ for all $x_1,x_2 \in X$, which contradicts the assumption $b \not= 0$.
\end{proof}

\begin{proof}[Proof of Theorem~\ref{thm:log-ordered-hilbert}]
	Self-adjoint operators on non-zero spaces always have non-empty spectrum, so $\spb(A) >- \infty$.
	Moreover, the equality $\spb(A) = \omega(A)$ holds for all analytic semigroups and thus, 
	in particular, for all self-adjoint semigroups.
	To prove the remaining equality we may, and shall, assume that $\spb(A) = \omega(A) = 0$.
	We need to show that
	\begin{align*}
		\limsup_{t \to \infty} \frac{\log \inner{v \,}{\, e^{tA} u}}{t} 
		\le 
		0 
		\le 
		\liminf_{t \to \infty} \frac{\log \inner{v \,}{\, e^{tA} u}}{t}
	\end{align*}
	The first inequality readily follows from $\omega(A) = 0$.
	To show the second inequality, 
	we follow the main idea of the proof given in \cite[Theorem~2.1]{KellerLenzVogtWojciechowski2015}, 
	although we present the details in a somewhat different manner:
	by the multiplier version of the spectral theorem for self-adjoint operators 
	there exists a measurable space $(\Omega,\mu)$, a measurable function $m: \Omega \to (-\infty,0]$,
	and a unitary operator $U: H \to L^2(\Omega,\mu)$ such that 
	$A = U^* M_m U$, where $M_m: L^2(\Omega,\mu) \supseteq \dom{M_m} \to L^2(\Omega,\mu)$ 
	is the operator that acts as the multiplication with $m$.%
	\footnote{I.e., $\dom{M_m} = \{f \in L^2(\Omega,\mu) : \; mf \in L^2(\Omega,\mu)\}$ 
	and $M_m f = mf$ for all $f \in \dom{M_m}$.}%
	\footnote{We note that the semigroup generated by $M_m$ on $L^2(\Omega,\mu)$, i.e., 
	$(M_{e^{tm}})_{t \ge 0}$, is conjugate to $(e^{tA})_{t \ge 0}$ via $U$, 
	but in contrast to $(e^{tA})_{t \ge 0}$ it does not have any positivity properties, in general.}
	Moreover, $0$ is in the essential range of $m$ since we assumed $0 = \spb(A)$.
	
	Let $\varepsilon > 0$ and let $Q$ denote the spectral projector of $A$ 
	associated with the real interval $[-\varepsilon,0]$, 
	i.e., $Q := U^* M_{\one_S} U$, 
	where $S := m^{-1}([-\varepsilon, 0]) \subseteq \Omega$. 
	Note that $S$ has non-zero measure since $0$ is in the essential range of $m$, 
	and hence $Q$ is non-zero.

	As $w$ is a quasi-interior point of $X^+$ and $Q$ is self-adjoint,
	Proposition~\ref{prop:char-quasi-interior-bilinear} 
	shows the existence 
	of a (non-zero) vector $\tilde w \in [0, w]$ such that $\inner{\tilde w}{Q\tilde w} \not= 0$, 
	and thus $\inner{\tilde w}{Q\tilde w} = \inner{Q\tilde w}{Q\tilde w} > 0$ (as $\tilde Q$ is a self-adjoint projection).
	For all $t \ge 0$ we have
	\begin{align*}
		\inner{v}{e^{tA}u} 
		\ge 
		\inner{\tilde w}{e^{tA} \tilde w} 
		\ge 
		e^{-t\varepsilon} 
		\inner{ \tilde w}{ Q\tilde w },
	\end{align*}
	where the first inequality follows from the positivity of the semigroup 
	and the assumption $H^+ \subseteq (H')^+$ 
	(which implies that $\inner{v}{\argument}$ and $\inner{\tilde w}{\argument}$ are positive functionals 
	and that the former dominates the latter), 
	and where the second inequality follows 
	from the spectral theorem for $A$ and its generated semigroup, i.e., from
	\begin{align*}
		\inner{\tilde w}{e^{tA} \tilde w} 
		& = 
		\inner{U \tilde w}{e^{tm} U \tilde w}_{L^2(\Omega,\mu)} 
		\\ 
		& = 
		\int_\Omega e^{t m(\omega)} \modulus{U \tilde w(\omega)}^2 \dx \mu(\omega)
		\\ 
		& \ge 
		\int_\Omega e^{-t\varepsilon} \one_S(\omega) \modulus{U \tilde w(\omega)}^2 \dx \mu(\omega) 
		\\
		& = 
		e^{-t\varepsilon} \inner{U \tilde w}{M_{\one_S} {U \tilde w}}_{L^2(\Omega,\mu)} 
		= 
		e^{-t\varepsilon} \inner{\tilde w}{Q \tilde w}.
	\end{align*}
	As $\inner{\tilde w}{Q\tilde w} > 0$, we conclude that 
	\begin{align*}
		\liminf_{t \to \infty} \frac{\log \inner{v \,}{\, e^{tA} u}}{t} 
		\ge 
		-\varepsilon,
	\end{align*}
	which proves the claim.
\end{proof}

As pointed out in \cite[Example~2.4]{KellerLenzVogtWojciechowski2015}, 
the self-adjointness in Theorem~\ref{thm:log-ordered-hilbert} cannot be dropped, 
even on $L^2$-spaces with their usual order.
Theorem~\ref{thm:log-ordered-hilbert} can be used to derive a number of interesting consequences.
The first one is another explicit formula for $\spb(A)$. 

\begin{corollary}
	\label{cor:s-a-hilbert}
	Let $(e^{tA})_{t \ge 0}$ be a positive and self-adjoint $C_0$-semigroup 
	with generator $A$ on a non-zero ordered Hilbert space $(H,H^+)$, 
	assume that $H^+ \subseteq (H')^+$ 
	and that $H^+$ contains a quasi-interior point.
	Then 
	\begin{align*}
		-\infty < \spb(A) = \omega(A) = 
		\inf 
		\big\{
			\lambda \in \R: \, 
			\exists u \in \qintt(H^+) \cap \dom{A} \text{ s.t. } Au \le \lambda u.
		\big\}
		,
	\end{align*}
	where $\qintt(H^+)$ denotes the set of quasi-interior points in $H^+$.
\end{corollary}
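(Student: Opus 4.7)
The plan is to derive everything from the two main tools already available: Theorem~\ref{thm:log-ordered-hilbert} for one direction, and a resolvent representation for a super-eigenvector for the other. The statement $-\infty < \spb(A) = \omega(A)$ is immediate from Theorem~\ref{thm:log-ordered-hilbert} (apply it with $u = v = w$ any quasi-interior point of $H^+$, which exists by hypothesis). Let $\alpha$ denote the infimum on the right-hand side; it remains to show $\spb(A) = \alpha$.

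For the estimate $\spb(A) \le \alpha$, I would start from a super-eigen-inequality $Au \le \lambda u$ with $u \in \qintt(H^+) \cap \dom{A}$ and first derive the pointwise semigroup estimate $e^{tA} u \le e^{t\lambda} u$ for all $t \ge 0$. This can be obtained exactly as in the proof of the implication \Implies{thm:stability-for-pos-sg-interior-point:item:strictly-decreasing-3}{thm:stability-for-pos-sg-interior-point:item:strongly-stable} in Theorem~\ref{thm:stability-for-pos-sg-interior-point}: apply $\Res(\mu,A)$ (positive for $\mu > \max\{\spb(A),\lambda\}$) to $(\lambda - A)u \ge 0$ to get $(\mu - \lambda)\Res(\mu,A)u \le u$, iterate to $(\mu \Res(\mu,A))^n u \le (\mu/(\mu-\lambda))^n u$, and pass to the limit with $\mu = n/t$ via Euler's formula. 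Since $H^+ \subseteq (H')^+$, pairing with $u$ yields $\inner{u}{e^{tA} u} \le e^{t\lambda} \|u\|^2$, and then Theorem~\ref{thm:log-ordered-hilbert} with $v = w = u$ gives $\spb(A) = \lim_{t\to\infty} t^{-1}\log \inner{u}{e^{tA}u} \le \lambda$, hence $\spb(A) \le \alpha$.

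For the reverse estimate $\alpha \le \spb(A)$, I would fix an arbitrary $\lambda > \spb(A)$ and any quasi-interior point $w \in H^+$, and set $u := \Res(\lambda,A) w$. Then $u \in \dom{A} \cap H^+$ and $Au = \lambda \Res(\lambda,A) w - w = \lambda u - w \le \lambda u$, so it only remains to check that $u$ is itself a quasi-interior point. To verify this via the characterization in Proposition~\ref{prop:char-quasi-interior}, I would take $x \in H \cong H'$ vanishing on $[0,u]$; since $\Res(\lambda,A)$ is positive it maps $[0,w]$ into $[0,u]$, so the functional $\Res(\lambda,A)^* x$ vanishes on $[0,w]$, and quasi-interiority of $w$ forces $\Res(\lambda,A)^* x = 0$. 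As the range of $\Res(\lambda,A)$ equals $\dom{A}$ and is therefore dense in $H$, the adjoint $\Res(\lambda,A)^*$ is injective, so $x = 0$ and $u$ is quasi-interior.

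The main obstacle is this last step: quasi-interior points need not be preserved by arbitrary positive operators (unlike interior points, compare Proposition~\ref{prop:Resolvent-mapping-interior-into-interior}), and the argument relies decisively on the injectivity of $\Res(\lambda,A)^*$ coming from the dense range of $\Res(\lambda,A)$ -- a property tied to $A$ generating a $C_0$-semigroup rather than to the Hilbert structure or self-adjointness.
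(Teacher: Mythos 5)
Your proposal is correct and follows essentially the same route as the paper: the ``$\le$'' direction via the Euler-formula argument from Theorem~\ref{thm:stability-for-pos-sg-interior-point} combined with Theorem~\ref{thm:log-ordered-hilbert} applied with $u=v=w$, and the ``$\ge$'' direction by taking $u = \Res(\lambda,A)w$ for a quasi-interior point $w$. Your detailed verification that $u$ is quasi-interior (via Proposition~\ref{prop:char-quasi-interior}, the inclusion $\Res(\lambda,A)[0,w] \subseteq [0,u]$, and injectivity of the adjoint from the dense range of the resolvent) correctly fills in a step that the paper only asserts in one line, and your closing remark rightly identifies this as the point where density of $\dom{A}$ is genuinely needed.
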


\begin{proof}
	We only need to show the equality on the right hand side.
	
	``$\le$''
	Let $\lambda \in \R$ and let $u \in \qintt(H^+) \cap \dom{A}$ such that $Au \le \lambda u$.
	We need to show that $\omega(A) \le \lambda$.
	
	By the same argument as in the proof of 
	\Implies{thm:stability-for-pos-sg-interior-point:item:strictly-decreasing-3}{thm:stability-for-pos-sg-interior-point:item:strongly-stable} 
	in Theorem~\ref{thm:stability-for-pos-sg-interior-point},
	one can see that the inequality $Au \le \lambda u$ 
	together with the positivity of the semigroup implies that
	\begin{align*}
		e^{tA} u \le e^{\lambda t} u
	\end{align*}
	for all $t \in [0,\infty)$. 
	By applying Theorem~\ref{thm:log-ordered-hilbert} (with $u=v=w$) we thus obtain
	\begin{align*}
		\omega(A) 
		= 
		\lim_{t \to \infty} \frac{\log \inner{e^{tA} u}{u}}{t}
		\le 
		\lim_{t \to \infty} \frac{\log \big(e^{t \lambda} \norm{u}^2 \big)}{t}
		= 
		\lambda.
	\end{align*}
	
	``$\ge$''
	Let $\lambda > \spb(A)$; 
	we show that $\lambda$ is an element of the set under the infimum. 
	To this end, let $v \in H^+$ be a quasi-interior point (which exists by assumption), 
	and set $u := \Res(\lambda,A) v$.
	Since the resolvent operator $\Res(\lambda,A)$ is positive and has dense range, 
	it follows that $u$ is also a quasi-interior point of $H^+$ \cite[Proposition~2.21]{GlueckWeber2020}.
	Moreover, $u \in \dom{A}$. 
	Finally we note that
	\begin{align*}
		A u = A\Res(\lambda,A) v = \lambda u - v \le \lambda u.
	\end{align*}
	This concludes the proof.
\end{proof}

We point out that the same formula as in Corollary~\ref{cor:s-a-hilbert} is true 
if $(X,X^+)$ is an arbtirary ordered Banach space whose cone is normal and has non-empty interior 
(in particular, without any self-adjointness assumption on the semigroup); 
this was proved in \cite[Theorem~5.3]{ArendtChernoffKato1982}
(note that, if the cone has non-empty interior, the quasi-interior points and the interior points 
coincide \cite[Corollary~2.8]{GlueckWeber2020}).

Another interesting consequence is the fact that, for self-adjoint positive semigroups, 
there is no eigenvalue different from $\spb(A)$ which has a quasi-interior point as its eigenvector.

\begin{corollary}
	\label{cor:s-a-hilbert-eigenvector}
	Let $(e^{tA})_{t \ge 0}$ be a positive and self-adjoint $C_0$-semigroup 
	with generator $A$ on a non-zero ordered Hilbert space $(H,H^+)$, 
	assume that $H^+ \subseteq (H')^+$. 
	
	Let $\lambda \in \R$ be an eigenvalue of $A$ and 
	assume that there exists a corresponding eigenvector $u \in \dom{A}$ 
	which is a quasi-interior point of $H^+$.
	Then $\lambda = \spb(A)$.
\end{corollary}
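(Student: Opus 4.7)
The plan is to apply Theorem~\ref{thm:log-ordered-hilbert} with the special choice $v = u$ and $w = u$. The assumption that $u$ is a quasi-interior point of $H^+$ ensures that the triple $(u,v,w) = (u,u,u)$ satisfies the hypotheses of that theorem (trivially $u,v \geq w$, and a quasi-interior point $w = u$ exists by assumption). Moreover, $u \neq 0$ since quasi-interior points cannot be zero in a non-zero space (the subspace $\bigcup_n [-n \cdot 0, n \cdot 0] = \{0\}$ is not dense in $H \neq \{0\}$), so $\norm{u} > 0$.

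Next, I would use the fact that $u$ is an eigenvector of the generator $A$ for the eigenvalue $\lambda$: this implies the standard identity $T(t) u = e^{t\lambda} u$ for all $t \geq 0$. Hence
\[
	\inner{u}{T(t) u} = e^{t\lambda} \norm{u}^2 > 0
\]
for every $t \ge 0$, and so
\[
	\frac{\log \inner{u}{T(t) u}}{t} = \lambda + \frac{2 \log \norm{u}}{t} \xrightarrow{t \to \infty} \lambda.
\]

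Combining this with the conclusion of Theorem~\ref{thm:log-ordered-hilbert} applied to $(u,u,u)$, namely
\[
	\spb(A) = \lim_{t \to \infty} \frac{\log \inner{u}{T(t) u}}{t},
\]
immediately gives $\spb(A) = \lambda$. There is essentially no obstacle here: the proof is a direct specialization of the logarithmic formula to an eigenvector, exploiting that the exponential growth of $\inner{u}{T(t)u}$ along an eigenvector is exactly dictated by the eigenvalue. The only point worth a sentence of verification is that $u$ is indeed non-zero and that the triple $(u,u,u)$ legitimately meets the hypotheses of Theorem~\ref{thm:log-ordered-hilbert}.
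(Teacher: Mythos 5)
Your proof is correct. The hypotheses of Theorem~\ref{thm:log-ordered-hilbert} are indeed met by the triple $(u,u,u)$, the eigenvector identity $T(t)u = e^{\lambda t}u$ is standard, and the logarithmic limit then computes to $\lambda$, which forces $\lambda = \spb(A)$. The paper takes a slightly different (and shorter) route: it observes that $Au = \lambda u \le \lambda u$ exhibits $\lambda$ as an element of the set appearing in the infimum formula of Corollary~\ref{cor:s-a-hilbert}, whence $\lambda \ge \spb(A)$; the reverse inequality $\lambda \le \spb(A)$ is left implicit, being immediate from the fact that an eigenvalue is a spectral value. Since Corollary~\ref{cor:s-a-hilbert} is itself deduced from Theorem~\ref{thm:log-ordered-hilbert}, the two arguments rest on the same underlying logarithmic formula; but your version is more self-contained in that, for a genuine eigenvector, the orbit is explicitly $e^{\lambda t}u$, so the limit yields \emph{both} inequalities at once without invoking the spectral inclusion $\lambda \in \spec(A) \Rightarrow \lambda \le \spb(A)$ or the Euler-formula argument hidden inside the proof of Corollary~\ref{cor:s-a-hilbert}. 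Your care in checking that $u \neq 0$ (so that $\log \norm{u}^2$ is finite) is warranted and correctly handled.
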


\begin{proof}
	Since $Au = \lambda u \le \lambda u$, 
	the number cannot be smaller than $\spb(A)$ according to Corollary~\ref{cor:s-a-hilbert}.
\end{proof}

\appendix 

\section{Stability of (non-positive) semigroups on Hilbert spaces}
\label{section:hilbert-non-positive}

In this appendix we give a, to the best of our knowledge new, 
characterization of uniform exponential stability of general $C_0$-semigroups 
(without any positivity assumption) on complex Hilbert spaces.
We will show that one of the implications in the theorem 
is a consequence of a classical result of Gearhart--Prüß--Greiner,
and that the other implication can either be obtained by using a solution 
to Lyapunov's equation~\eqref{eq:lyapunov-alt} below
or by employing our Theorem~\ref{thm:stability-for-pos-sg}.

Let $H$ be a complex Hilbert space, endowed with an inner product $\inner{\cdot}{\cdot}$.
While no order structure is assumed, a connection to positivity might not be too surprising to readers familiar with the characterization of stability in terms of Lyapunov's equation:

If $A$ generates a $C_0$-semigroup on $H$, the growth bound of the semigroup satisfies $\omega(A) < 0$ if and only if there exists a self-adjoint operator $P \in \calL(H)$ 
which is positive semi-definite and injective (i.e., $\inner{Px}{x}>0$ for $x\in H\backslash\{0\}$) 
and which solves Lyapunov's equation
\begin{align}
	\label{eq:lyapunov}
	\inner{A x}{Px} + \inner{Px}{Ax} = - \inner{x}{x}
	\qquad \text{for all } x \in \dom{A};
\end{align}
see \cite[Theorem~5.1.3]{CuZ95}. 
This is intrinsically related to the mappings $\calL(H) \ni S \mapsto e^{tA} S (e^{tA})^* \in \calL(H)$, which leave the positive cone in the self-adjoint part of $\calL(H)$ invariant and constitutes thus a positive semigroup (though this semigroup will not be strongly continuous, in general).
Clearly,~\eqref{eq:lyapunov} is equivalent to 
\begin{align}
	\label{eq:lyapunov-alt}
	\re\inner{A x}{Px} = - \frac{1}{2}
	\qquad \text{for all } x \in \dom{A} \text{ of norm } \|x\|_H=1.
\end{align}
The following theorem is the main result of this appendix.
We will show that the implication 
\Implies{thm:stability-hilbert:itm:stable}{thm:stability-hilbert:itm:estimate}
can either be obtained by directly employing~\eqref{eq:lyapunov}, 
or by following an approach related to the proof~\eqref{eq:lyapunov} -- 
where we will, however, consider the action of the operators $S \mapsto e^{tA} S (e^{tA})^*$ 
on the space $\calK(H)$ of compact operators on $H$;
considering the action on $\calK(H)$ rather than on $\calB(H)$ will give us strong continuity 
with respect to the time variable.

\begin{theorem}
	\label{thm:stability-hilbert}
	Let $A$ be the generator of a $C_0$-semigroup on a complex Hilbert space $H$. 
	The following are equivalent:
	\begin{enumerate}[label=\upshape(\roman*)]
		\item\label{thm:stability-hilbert:itm:stable} 
		The growth bound of the semigroup satisfies $\omega(A) < 0$.
		
		\item\label{thm:stability-hilbert:itm:estimate} 
		There exists a number $\eta > 0$ with the following property: 
		for each $x \in \dom{A}$ of norm $\norm{x}_H = 1$ there is vector $y \in H$ of norm $\norm{y}_H = 1$ which satisfies
		\begin{align*}
			\re \Big( \inner{y}{x} \inner{Ax}{y} \Big) \le -\eta.
		\end{align*}
	\end{enumerate}
\end{theorem}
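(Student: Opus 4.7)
The plan is to prove the two implications separately, using a Lyapunov argument for \Implies{thm:stability-hilbert:itm:stable}{thm:stability-hilbert:itm:estimate} and the Gearhart--Prüß--Greiner theorem for the converse. For the first direction, I would introduce the Lyapunov operator $P := \int_0^\infty T(s)^* T(s)\, ds$, which is well-defined, bounded, self-adjoint and positive precisely because $\omega(A) < 0$ forces $\norm{T(s)}^2$ to decay exponentially. Differentiating $\norm{T(t)x}^2$ along the semigroup and integrating from $0$ to $\infty$ yields the identity $\re\inner{Ax}{Px} = -\frac{1}{2}\norm{x}^2$ for $x \in \dom{A}$. For a unit vector $x \in \dom{A}$ one first checks that $Px \neq 0$ (otherwise $\int_0^\infty \norm{T(s)x}^2\,ds = 0$ forces $x = 0$) and then takes $y := Px/\norm{Px}$. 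A short computation gives
\[
	\re\bigl(\inner{y}{x}\inner{Ax}{y}\bigr) = -\frac{1}{2}\,\frac{\inner{Px}{x}}{\norm{Px}^2},
\]
while the operator inequality $P^2 \le \norm{P}\,P$ (a consequence of $P \le \norm{P}\,\id$) rewrites as $\norm{Px}^2 \le \norm{P}\inner{Px}{x}$. Hence the right-hand side is at most $-1/(2\norm{P})$, so \ref{thm:stability-hilbert:itm:estimate} holds with $\eta = 1/(2\norm{P})$.

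For the converse, the plan is to apply the Gearhart--Prüß--Greiner theorem, which characterises $\omega(A) < 0$ on a Hilbert space by $i\R \subseteq \resSet(A)$ together with $\sup_{s \in \R}\norm{\Res(is, A)} < \infty$. Given $x \in \dom{A}$ with $\norm{x} = 1$ and $y$ as in~\ref{thm:stability-hilbert:itm:estimate}, I would multiply $\inner{y}{(is - A)x} = is\inner{y}{x} - \inner{y}{Ax}$ by $\overline{\inner{y}{x}}$ and take real parts; the imaginary contribution $is|\inner{y}{x}|^2$ vanishes and what remains is bounded below by $\eta$. Since $|\inner{y}{x}| \le 1$, the Cauchy--Schwarz inequality then yields $\norm{(is - A)x} \ge \eta\norm{x}$. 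The identical computation applied to $A - \mu\,\id$ shows that \ref{thm:stability-hilbert:itm:estimate} is inherited with the same $\eta$ (the extra term $-\mu|\inner{y}{x}|^2$ only helps), so the bound extends to $\norm{(\lambda - A)x} \ge \eta\norm{x}$ for every $\lambda$ with $\re\lambda \ge 0$.

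To close the argument, I would combine this uniform lower bound with the fact that $\resSet(A)$ already contains all sufficiently large real numbers. The set $S := \resSet(A) \cap \{\re\lambda \ge 0\}$ is relatively open in the closed right half-plane, non-empty, and also relatively closed: along any sequence $\lambda_n \in S$ converging to some $\lambda$ with $\re\lambda \ge 0$, the bound $\norm{\Res(\lambda_n, A)} \le 1/\eta$ keeps the distance from $\lambda_n$ to $\spec(A)$ at least $\eta$, which forces $\lambda \in \resSet(A)$. Connectedness of the closed right half-plane then gives $S = \{\re\lambda \ge 0\}$ together with the uniform resolvent bound $\norm{\Res(\lambda, A)} \le 1/\eta$, which is exactly the hypothesis of Gearhart--Prüß--Greiner.

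The hard part is really \Implies{thm:stability-hilbert:itm:estimate}{thm:stability-hilbert:itm:stable}: the small-gain estimate directly yields only that $is - A$ is injective with closed range, and one still has to rule out that the range has positive codimension (equivalently, that $-is$ is an eigenvalue of $A^*$). The shift-and-connect argument above bypasses this issue entirely by exploiting the resolvent behaviour already available for $\re\lambda \gg 0$ and propagating it along the closed right half-plane down to $i\R$.
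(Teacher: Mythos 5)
Your proposal is correct, and the two halves deserve separate comparison. For the implication \Implies{thm:stability-hilbert:itm:stable}{thm:stability-hilbert:itm:estimate} you follow essentially the same route as the paper's first proof: both construct the Lyapunov solution $P$, choose $y = Px/\norm{Px}$, and arrive at the same constant $\eta = 1/(2\norm{P})$; the only difference is that you justify the final estimate $\inner{Px}{x}/\norm{Px}^2 \ge 1/\norm{P}$ by the operator inequality $P^2 \le \norm{P}\,P$, whereas the paper passes through the multiplier form of the spectral theorem to prove the same fact --- your version is the more economical one. (The paper also offers a second proof of this implication via the positive semigroup $K \mapsto e^{tA}K(e^{tA})^*$ on $\calK(H)_{\sa}$ and Theorem~\ref{thm:stability-for-pos-sg}, which is what ties the appendix to the main results; you do not need that, but be aware it is the conceptual point of including the theorem there.) For the converse implication your argument is genuinely different in structure: the paper argues by contradiction, extracting from the Gearhart--Pr\"u\ss{}--Greiner theorem a sequence $(\lambda_n)$ with $\re\lambda_n > 0$ and $\norm{\Res(\lambda_n,A)} \to \infty$, building normalized approximate eigenvectors $x_n = \Res(\lambda_n,A)z_n/\alpha_n$ and showing that condition~\ref{thm:stability-hilbert:itm:estimate} fails along them. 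You instead prove directly the uniform lower bound $\norm{(\lambda - A)x} \ge \eta\norm{x}$ for all $\lambda$ in the closed right half-plane and then use the standard openness/closedness (connectedness) argument to conclude that the half-plane lies in the resolvent set with $\norm{\Res(\lambda,A)} \le 1/\eta$, which is exactly the hypothesis of Gearhart--Pr\"u\ss{}--Greiner. You correctly identify the one genuine subtlety of the direct route --- a lower bound only yields injectivity and closed range, not surjectivity --- and your propagation argument from $\re\lambda \gg 0$ down to the imaginary axis handles it properly. What your route buys is the explicit quantitative resolvent bound $1/\eta$ on the whole half-plane; what the paper's contradiction argument buys is brevity, since it never has to confront the surjectivity question at all.
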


The advantage that we see in condition~\ref{thm:stability-hilbert:itm:estimate} in the theorem compared to Lyapunov's equation~\eqref{eq:lyapunov-alt} is that vector $y$ in condition~\ref{thm:stability-hilbert:itm:estimate} is not required to depend linearly (or in any other structured way) on $x$.
Assertion~\ref{thm:stability-hilbert:itm:estimate} should also be compared to the stronger energy estimate
\begin{align*}
	\re \inner{Ax}{x} \le -\eta
	\qquad \text{for all } x \in \dom{A} \text{ of norm } \norm{x}_H = 1,
\end{align*}
which is equivalent to the ``quasi-contractive'' exponential stability condition $\norm{e^{tA}} \le e^{-t\eta}$ for all $t \ge 0$.

Let us first show that the proof of the implication 
\Implies{thm:stability-hilbert:itm:estimate}{thm:stability-hilbert:itm:stable} 
in the theorem is a consequence of a classical theorem on the stability of $C_0$-semigroups on Hilbert spaces:

\begin{proof}[Proof of \Implies{thm:stability-hilbert:itm:estimate}{thm:stability-hilbert:itm:stable} in %
				Theorem~\ref{thm:stability-hilbert}] 
	Assume that $\omega(A) \ge 0$.
	Then it follows from the Gearhart--Prüß--Greiner theorem \cite[Theorem~V.1.11]{EngelNagel2000} 
	that there exists a sequence of complex numbers $(\lambda_n)$ in the resolvent set of $A$ 
	such that $\re \lambda_n > 0$ for each $n$ and $\norm{\Res(\lambda_n,A)} \to \infty$. 
	We can thus choose normalized vectors $z_n \in H$ such that $\alpha_n := \norm{\Res(\lambda_n, A) z_n}_H \to \infty$, and we define vectors
	\begin{align*}
		x_n := \frac{1}{\alpha_n} \Res(\lambda_n, A) z_n \in \dom{A}
	\end{align*}
	of norm $\norm{x_n}_H = 1$. 
	For each $x_n$, choose a normalized vector $y_n$ as in assertion~\ref{thm:stability-hilbert:itm:estimate}
	of the theorem.
	Then
	\begin{align*}
		A x_n 
		= 
		-\frac{1}{\alpha_n} z_n + \lambda_n x_n,
	\end{align*}
	and thus
	\begin{align*}
		-\eta 
		\ge 
		\re \Big( \inner{y_n}{x_n} \inner{Ax_n}{y_n} \Big)
		& = 
		\re \Big( -\frac{1}{\alpha_n} \inner{y_n}{x_n} \inner{z_n}{y_n}  +  \lambda_n \inner{y_n}{x_n} \inner{x_n}{y_n} \Big) 
		\\
		& \ge 
		\frac{- \re \Big(\inner{y_n}{x_n} \inner{z_n}{y_n}\Big)}{\alpha_n}
		\to 0,
	\end{align*}
	which is a contradiction.
\end{proof}

We will now give two different proofs for the converse implication 
\Implies{thm:stability-hilbert:itm:stable}{thm:stability-hilbert:itm:estimate}
in Theorem~\ref{thm:stability-hilbert}.
The first one is essentially an application of Lyapunov's equation~\eqref{eq:lyapunov-alt}:
	
\begin{proof}[Proof of \Implies{thm:stability-hilbert:itm:stable}{thm:stability-hilbert:itm:estimate} in %
				Theorem~\ref{thm:stability-hilbert} via Lyapunov's equation] 
	Since the inequality $\omega(A) < 0$ holds,
	there exists a positive semi-definite and injective operator $P \in \calL(H)$
	that satisfies Lyapunov's equation~\eqref{eq:lyapunov-alt}.
	
	In order to show~\ref{thm:stability-hilbert:itm:estimate}, 
	let $x \in \dom{A}$ have norm $1$.
	We choose $y := Px / \|Px\|_H$ 
	(which is well-defined since the denominator is non-zero due to the injectivity of $P$),
	and with this choice
	\begin{align*}
		\re \Big( \inner{y}{x} \inner{Ax}{y} \Big) 
		&= 
		\frac{1}{\|Px\|_H^2}\re \Big( \inner{Px}{x} \inner{Ax}{Px} \Big) \\
		&= 
		\frac{\inner{Px}{x}}{\|Px\|_H^2}\re \Big(  \inner{Ax}{Px} \Big)
		= 
		- \frac{\inner{Px}{x}}{2\|Px\|_H^2}.
	\end{align*}
	In order to find an upper estimate for the latter term,
	define $Q := P/\norm{P}$. 
	So the self-adjoint operator $Q$ is also positive semi-definite,
	and it has norm $1$. 
	For every non-zero $x \in H$ we have
	\begin{align*}
		- \frac{\inner{Px}{x}}{2\|Px\|_H^2} 
		=
		- \frac{1}{\norm{P}} \frac{\inner{Qx}{x}}{2\|Qx\|_H^2}.
	\end{align*}
	Now we use the multiplier version of the spectral theorem for self-adjoint operators:
	it allows us to represent $x$ as an $L^2$-function $f$ (over a suitable measure space $\Omega$) and $Q$ is the multiplication with a real-valued $L^\infty$ function $m$ that takes values in $[0,1]$. So
	\begin{align*}
		- \frac{1}{\norm{P}} \frac{\inner{Qx}{x}}{2 \|Qx\|_H^2}
		=
		- \frac{1}{\norm{P}} \frac{\int_\Omega m \modulus{f}^2}{2 \int_\Omega m^2 \modulus{f}^2} 
		\le 
		- \frac{1}{2 \norm{P}};
	\end{align*}
	the last inequality follows from that fact that $m$ takes values in $[0,1]$ only -- 
	this implies that $m^2 \le m$, so the function under the integral in the denominator 
	is (pointwise) smaller than the function under the integral in the numerator.
	So we proved that assertion~\ref{thm:stability-hilbert:itm:estimate} is satisfied 
	with $\eta = \frac{1}{2 \norm{P}}$.
\end{proof}

Finally, we give a second proof of the implication 
\Implies{thm:stability-hilbert:itm:stable}{thm:stability-hilbert:itm:estimate} in Theorem~\ref{thm:stability-hilbert} 
-- but this time we employ our Theorem~\ref{thm:stability-for-pos-sg} about positive semigroups for the proof.
Readers familiar with the proof of Lyapunov's equality will probably not be too surprised
about the main approach in the proof;
however, we find it worthwhile to give all arguments in detail anyway, 
since they provide an interesting relation to the theory of positive $C_0$-semigroups on ordered Banach spaces.

We will need the following lemma. 
Let $\calK(H)$ denote the $C^*$-algebra of compact linear operators on $H$ and let $\calK(H)_\sa$ denote its self-adjoint part. 
As usual, we denote by $\calK(H)_\sa^+$ the cone of those operators $K$ in $X$ that satisfy $\spec(K) \subseteq [0,\infty)$. 
Then $(\calK(H)_\sa, \calK(H)_\sa^+)$ is an ordered Banach space with normal and generating cone, and $\calK(H)$ is a complexification of $\calK(H)_\sa$. 
We call an operator \emph{positive definite} if its spectrum is contained in $(0,\infty)$.

We have the following formula for the distance to the positive cone in $X$:

\begin{lemma}
	\label{lem:dist-cone-hilbert}
	Let $H$ be a complex Hilbert space. 
	For every self-adjoint compact linear operator $K$ on $H$ that is not positively definite%
	\footnote{The latter assumption is automatically satisfied due to the compactness of $K$ if $H$ is infinite-dimensional, as in this case $0 \in \sigma(K)$.}%
	,
	one has
	\begin{align*}
		\dist(K,\calK(H)_\sa^+) = \sup \Big\{ -\inner{Ky}{y} : \, y \in H, \; \norm{y}_H = 1 \Big\}.
	\end{align*}
\end{lemma}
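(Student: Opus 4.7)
The plan is to compute $\dist(K,\calK(H)_\sa^+)$ explicitly by approximating $K$ with its positive part and then to identify the resulting norm with the right-hand side via the spectral theorem. Denote by $\mu$ the supremum on the right. Since $K$ is self-adjoint, $\inner{Ky}{y}$ is real, so $\mu=-\inf_{\|y\|_H=1}\inner{Ky}{y}=-\inf\spec(K)$. The hypothesis that $K$ is not positive definite translates into $\inf\spec(K)\le 0$ and thus guarantees $\mu\ge 0$.

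For the lower bound $\dist(K,\calK(H)_\sa^+)\ge \mu$, I would fix any $P\in\calK(H)_\sa^+$ and any unit vector $y\in H$. Positive semi-definiteness of $P$ gives $\inner{Py}{y}\ge 0$, and since $\inner{(K-P)y}{y}$ is real,
\[
\|K-P\|\ge |\inner{(K-P)y}{y}|\ge -\inner{(K-P)y}{y}=-\inner{Ky}{y}+\inner{Py}{y}\ge -\inner{Ky}{y}.
\]
Taking the supremum over $y$ and then the infimum over $P$ yields the desired inequality.

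For the upper bound $\dist(K,\calK(H)_\sa^+)\le \mu$, I would apply the continuous functional calculus to decompose $K=K_+-K_-$, where $K_\pm$ are the positive and negative parts of $K$. Both $K_+$ and $K_-$ are self-adjoint and positive semi-definite; moreover, since the functions $t\mapsto t_\pm$ are continuous and vanish at $0$, the operators $K_\pm$ are compact. Hence $K_+\in\calK(H)_\sa^+$ is an admissible test operator, so $\dist(K,\calK(H)_\sa^+)\le \|K-K_+\|=\|K_-\|$. It remains to prove $\|K_-\|\le \mu$: let $P_-$ denote the spectral projection of $K$ associated to $(-\infty,0)$, so that $K_-$ is supported on $P_-H$ and $K|_{P_-H}=-K_-|_{P_-H}$. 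By the spectral theorem,
\[
\|K_-\|=\sup_{y\in P_-H,\ \|y\|_H=1}\inner{K_-y}{y}=\sup_{y\in P_-H,\ \|y\|_H=1}\bigl(-\inner{Ky}{y}\bigr)\le \mu.
\]

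There is no genuine obstacle here; the result is essentially a direct consequence of the spectral decomposition. The only delicate point worth flagging is ensuring that the natural candidate $K_+$ actually lies in the compact ideal $\calK(H)$, which is handled by the observation that continuous functional calculus applied to a function vanishing at $0$ sends compact operators to compact operators.
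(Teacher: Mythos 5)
Your proof is correct and takes essentially the same route as the paper: the same Cauchy--Schwarz argument for the lower bound, and the same decomposition $K = K_+ - K_-$ with $\dist(K,\calK(H)_\sa^+) \le \norm{K - K_+} = \norm{K_-} = \mu$ for the upper bound. Your extra care in checking that $K_+$ is compact (via functional calculus with a function vanishing at $0$) is a detail the paper leaves implicit but is worth having.
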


\begin{proof}
	``$\ge$'': 
	For every operator $L \in \calK(H)_\sa^+$ 
	and every vector $y \in H$ of norm $1$ one has
	\begin{align*}
		\norm{K - L} \ge \inner{(L-K)y}{y} \ge -\inner{Ky}{y}.
	\end{align*}
	
	``$\le$'':
	Let $\lambda \in \R$ denote the number on the right-hand side of the claimed equality. 
	Then $-\lambda$ is the minimum of $\spec(K)$, and $-\lambda \le 0$ since $K$ is not positively definite. 
	As a consequence of the spectral theorem for self-adjoint compact operators 
	we can split $K$ as $K = K^+ - K^-$ for operators $K^+, K^- \in \calK(H)_\sa^+$ 
	such that the spectral radius of $K^-$ is equal to $\lambda$.
	Hence,
	\begin{align*}
		\dist(K,\calK(H)_\sa^+) \le \norm{K - K^+} = \norm{K^-} = \lambda.
	\end{align*}
	This proves the claim.
\end{proof}

Now we give our second proof of the implication 
\Implies{thm:stability-hilbert:itm:stable}{thm:stability-hilbert:itm:estimate}
in Theorem~\ref{thm:stability-hilbert}.
We use the following notation: 
for all $x,y \in H$ the symbol $x \otimes y \in \calL(H)$ denotes the rank-$1$ operator on $H$ given by $(x \otimes y) z = \inner{z}{y} x$ for all $z \in H$;
it has operator norm $\norm{x} \norm{y}$.

\begin{proof}[Proof of \Implies{thm:stability-hilbert:itm:stable}{thm:stability-hilbert:itm:estimate} %
				in Theorem~\ref{thm:stability-hilbert} via positive semigroups]
	For each $t \in [0,\infty)$ consider the operator $T_t$ on $\calK(H)_\sa$ that is given by
	\begin{align*}
		T_t(K) = e^{tA} K (e^{tA})^* 
		\qquad \text{for all } K \in \calK(H)_\sa.
	\end{align*}
	This operator is positive. 
	The family $(T_t)_{t \ge 0}$ clearly satisfies the semigroup law.
	Moreover, it is strongly continuous, as can be seen be first checking strong continuity on finite rank-operators $K$ and then using a density argument. 
	Hence, $(T_t)_{t \ge 0}$ is a positive $C_0$-semigroup on the ordered Banach space $(\calK(H)_\sa, \calK(H)_\sa^+)$; 
	let us denote its generator by $L$. 
	
	It follows from $\omega(A) < 0$ that $\omega(L) < 0$, and hence $\spb(L) < 0$. 
	So we can apply Theorem~\ref{thm:stability-for-pos-sg} and part~\ref{thm:stability-for-pos-sg:itm:uniform-small-gain} of the theorem 
	tells us that here exists a number $\eta > 0$ such that
	\begin{align*}
		\dist(L(K),\calK(H)_\sa^+) \ge \eta \norm{K} \quad \text{for all } 0 \le K \in\dom{L}.
	\end{align*}
	Now let $x \in \dom{A}$ of norm $\norm{x}_H = 1$ and consider the rank-$1$ operator $x \otimes x \in \calK(H)_\sa^+$; 
	it has norm $1$. 
	Moreover, it is easy to check that this operator is in $\dom{L}$ and that we have
	$
		L(x \otimes x) = (Ax) \otimes x + x \otimes (Ax).
	$
	Hence, 
	\begin{align*}
		\dist\big((Ax) \otimes x + x \otimes (Ax), \calK(H)_\sa^+\big) \ge \eta.
	\end{align*}
	
	By Lemma~\ref{lem:dist-cone-hilbert} we thus find an operator a normalised vector $y \in H$ such that 
	\begin{align*}
		-\Big( \, [(Ax) \otimes x + x \otimes (Ax)]y \, \Big| \, y \,\Big) \ge \eta/2
	\end{align*}
	This means precisely that $\re \Big( \inner{y}{x} \inner{Ax}{y} \Big) \le -\eta/4$.
\end{proof}

\section*{Acknowledgements} 

We are indebted to Delio Mugnolo for his suggestion to consider potential generalizations of 
the Collatz--Wielandt formula to infinite dimensions, 
and to Joachim Kerner for helpful discussions regarding the operator $\Delta + V$ 
in Example~\ref{exa:neumann-laplace-with-potential}. 
We also thank Sahiba Arora for pointing out several corrections to us.

\section*{Declarations}

\subsection*{Funding}

This research has been supported by the German Research Foundation (DFG) via the grant MI 1886/2-2.

\subsection*{Conflict of interest/Competing interests}

		There is no conflict of interests.

\subsection*{Ethics approval}

		Not applicable

\subsection*{Availability of data and materials}

		Not applicable

\subsection*{Authors' contributions}

	Both authors contributed to the derivations and proofs of the main
results. Both authors read and approved the final manuscript.

\bibliographystyle{plain}
\bibliography{literature}

\begin{thebibliography}{10}

\bibitem{AliprantisTourky2007}
Charalambos~D. {Aliprantis} and Rabee {Tourky}.
\newblock {\em {Cones and duality}}, volume~84.
\newblock Providence, RI: American Mathematical Society (AMS), 2007.

\bibitem{Arendt1987}
Wolfgang {Arendt}.
\newblock {Resolvent positive operators.}
\newblock {\em {Proc. Lond. Math. Soc. (3)}}, 54:321--349, 1987.

\bibitem{ArendtBattyHieberNeubrander2011}
Wolfgang {Arendt}, Charles J.~K. {Batty}, Matthias {Hieber}, and Frank
  {Neubrander}.
\newblock {\em {Vector-valued Laplace transforms and Cauchy problems}},
  volume~96.
\newblock Basel: Birkh\"auser, 2011.

\bibitem{ArendtChernoffKato1982}
Wolfgang {Arendt}, Paul~R. {Chernoff}, and Tosio {Kato}.
\newblock {A generalization of dissipativity and positive semigroups}.
\newblock {\em {J. Oper. Theory}}, 8:167--180, 1982.

\bibitem{ArendtNittka2009}
Wolfgang {Arendt} and Robin {Nittka}.
\newblock {Equivalent complete norms and positivity.}
\newblock {\em {Arch. Math.}}, 92(5):414--427, 2009.

\bibitem{AroraGlueck2022}
Sahiba Arora and Jochen Gl{\"u}ck.
\newblock Stability of (eventually) positive semigroups on spaces of continuous
  functions.
\newblock {\em C. R., Math., Acad. Sci. Paris}, 360:771--775, 2022.

\bibitem{AroraGlueckPaunonenSchwenningerPreprint}
Sahiba {Arora}, Jochen {Gl\"uck}, Lassi {Paunonen}, and Felix~L.
  {Schwenninger}.
\newblock Limit-case admissibility for positive infinite-dimensional systems.
\newblock 2024.
\newblock Preprint.

\bibitem{BarbieriEngel2024Preprint}
Alessio Barbieri and Klaus-Jochen Engel.
\newblock On structured perturbations of positive semigroups.
\newblock 2024.
\newblock Preprint; available online at arxiv.org/abs/2405.18947v1.

\bibitem{BatkaiJacobVoigtWintermayr2018}
Andr{\'a}s B{\'a}tkai, Birgit Jacob, J{\"u}rgen Voigt, and Jens Wintermayr.
\newblock Perturbations of positive semigroups on {AM}-spaces.
\newblock {\em Semigroup Forum}, 96(2):333--347, 2018.

\bibitem{BatkaiKramarRhandi2017}
Andr\'as {B\'atkai}, Marjeta {Kramar Fijav\v{z}}, and Abdelaziz {Rhandi}.
\newblock {\em {Positive operator semigroups. From finite to infinite
  dimensions}}, volume 257.
\newblock Basel: Birkh\"auser/Springer, 2017.

\bibitem{BattyDavies1983}
Charles J.~K. {Batty} and Edward~B. {Davies}.
\newblock {Positive semigroups and resolvents}.
\newblock {\em {J. Oper. Theory}}, 10:357--363, 1983.

\bibitem{BattyRobinson1984}
Charles J.~K. {Batty} and Derek~W. {Robinson}.
\newblock {Positive one-parameter semigroups on ordered Banach spaces.}
\newblock {\em {Acta Appl. Math.}}, 2:221--296, 1984.

\bibitem{Bonsall1958}
Frank~F. Bonsall.
\newblock Linear operators in complete positive cones.
\newblock {\em Proc. Lond. Math. Soc. (3)}, 8:523--575, 1958.

\bibitem{ChillTomilov2007}
Ralph {Chill} and Yuri {Tomilov}.
\newblock {Stability of operator semigroups: ideas and results.}
\newblock In {\em {Perspectives in operator theory. Papers of the workshop on
  operator theory, Warsaw, Poland, April 19--May 3, 2004}}, pages 71--109.
  Warsaw: Polish Academy of Sciences, Institute of Mathematics, 2007.

\bibitem{Con90}
John~B Conway.
\newblock {\em A course in functional analysis}.
\newblock Springer-Verlag, New York, 1990.

\bibitem{CuZ20}
Ruth Curtain and Hans Zwart.
\newblock {\em Introduction to Infinite-Dimensional Systems Theory: A
  State-Space Approach}.
\newblock Springer, 2020.

\bibitem{CuZ95}
Ruth~F. Curtain and Hans Zwart.
\newblock {\em An introduction to infinite-dimensional linear systems theory},
  volume~21 of {\em Texts Appl. Math.}
\newblock New York, NY: Springer-Verlag, 1995.

\bibitem{DanersGlueckKennedy2016}
Daniel Daners, Jochen Gl{\"u}ck, and James~B. Kennedy.
\newblock Eventually positive semigroups of linear operators.
\newblock {\em J. Math. Anal. Appl.}, 433(2):1561--1593, 2016.

\bibitem{DRW10}
Sergey Dashkovskiy, Bj\"{o}rn R\"{u}ffer, and Fabian Wirth.
\newblock {Small gain theorems for large scale systems and construction of ISS
  Lyapunov functions}.
\newblock {\em SIAM Journal on Control and Optimization}, 48(6):4089--4118,
  2010.

\bibitem{DonskerVaradhan1975}
Monroe~D. Donsker and S.~R.~Srinivasa Varadhan.
\newblock On a variational formula for the principal eigenvalue for operators
  with maximum principle.
\newblock {\em Proc. Natl. Acad. Sci. USA}, 72:780--783, 1975.

\bibitem{DonskerVaradhan1976}
Monroe~D. Donsker and S.~R.~Srinivasa Varadhan.
\newblock On the principal eigenvalue of second-order elliptic differential
  operators.
\newblock {\em Commun. Pure Appl. Math.}, 29:595--621, 1976.

\bibitem{Eisner2010}
Tanja {Eisner}.
\newblock {\em {Stability of operators and operator semigroups.}}
\newblock Basel: Birkh\"auser, 2010.

\bibitem{ElMennaoui1994}
Omar {ElMennaoui}.
\newblock {Asymptotic behaviour of integrated semigroups}.
\newblock {\em {J. Comput. Appl. Math.}}, 54(3):351--369, 1994.

\bibitem{Emelyanov2007}
Eduard~Yu. {Emel'yanov}.
\newblock {\em {Non-spectral asymptotic analysis of one-parameter operator
  semigroups}}.
\newblock Basel: Birkh\"auser, 2007.

\bibitem{EngelNagel2000}
Klaus-Jochen {Engel} and Rainer {Nagel}.
\newblock {\em {One-parameter semigroups for linear evolution equations.}}
\newblock Berlin: Springer, 2000.

\bibitem{Friedland1990}
Shmuel Friedland.
\newblock Characterizations of the spectral radius of positive operators.
\newblock {\em Linear Algebra Appl.}, 134:93--105, 1990.

\bibitem{Friedland2020}
Shmuel Friedland.
\newblock The {Collatz}-{Wielandt} quotient for pairs of nonnegative operators.
\newblock {\em Appl. Math., Praha}, 65(5):557--597, 2020.

\bibitem{Gantouh2023b}
Yassine~El Gantouh.
\newblock Positivity of infinite-dimensional linear systems.
\newblock 2023.
\newblock Preprint.

\bibitem{Gantouh2024}
Yassine~El Gantouh.
\newblock Well-posedness and stability of a class of linear systems.
\newblock {\em Positivity}, 28(2):20, 2024.
\newblock Id/No 16.

\bibitem{Glueck2016}
Jochen {Gl\"uck}.
\newblock {\em Invariant sets and long time behaviour of operator semigroups}.
\newblock PhD thesis, Universit{\"a}t Ulm, 2016.
\newblock DOI: 10.18725/OPARU-4238.

\bibitem{GlueckKaplin2024}
Jochen Gl{\"u}ck and Michael Kaplin.
\newblock Order boundedness and order continuity properties of positive
  operator semigroups.
\newblock {\em Quaest. Math.}, 47:s153--s168, 2024.

\bibitem{GlM21}
Jochen Gl\"uck and Andrii Mironchenko.
\newblock Stability criteria for positive linear discrete-time systems.
\newblock {\em Positivity}, 25(5):2029--2059, 2021.

\bibitem{GlueckWeber2020}
Jochen {Gl\"uck} and Martin~R. {Weber}.
\newblock {Almost interior points in ordered Banach spaces and the long-term
  behaviour of strongly positive operator semigroups.}
\newblock {\em {Stud. Math.}}, 254(3):237--263, 2020.

\bibitem{GohbergGoldbergKaashoek1990}
Israel {Gohberg}, Seymour {Goldberg}, and Marinus~A. {Kaashoek}.
\newblock {\em {Classes of linear operators. Vol. I}}, volume~49.
\newblock Basel etc.: Birkh\"auser Verlag, 1990.

\bibitem{JTP94}
Zhong-Ping Jiang, A.~R. Teel, and L.~Praly.
\newblock Small-gain theorem for {ISS} systems and applications.
\newblock {\em Mathematics of Control, Signals, and Systems}, 7(2):95--120,
  1994.

\bibitem{KanigowskiKryszewski2012}
Adam {Kanigowski} and Wojciech {Kryszewski}.
\newblock {Perron-Frobenius and Krein-Rutman theorems for tangentially positive
  operators}.
\newblock {\em {Cent. Eur. J. Math.}}, 10(6):2240--2263, 2012.

\bibitem{Karlin1959}
Samuel {Karlin}.
\newblock {Positive operators}.
\newblock {\em {J. Math. Mech.}}, 8:907--937, 1959.

\bibitem{KatsikisPolyrakis2006}
Vasilios Katsikis and Ioannis~A. Polyrakis.
\newblock Positive bases in ordered subspaces with the {Riesz} decomposition
  property.
\newblock {\em Stud. Math.}, 174(3):233--253, 2006.

\bibitem{KMS21}
Christoph Kawan, Andrii Mironchenko, Abdalla Swikir, Navid Noroozi, and Majid
  Zamani.
\newblock A {L}yapunov-based small-gain theorem for infinite networks.
\newblock {\em IEEE Transactions on Automatic Control}, 66(12):5830--5844,
  2021.

\bibitem{KellerLenzVogtWojciechowski2015}
Matthias Keller, Daniel Lenz, Hendrik Vogt, and Rados\l~aw Wojciechowski.
\newblock Note on basic features of large time behaviour of heat kernels.
\newblock {\em J. Reine Angew. Math.}, 708:73--95, 2015.

\bibitem{Koshkin2015}
Sergiy Koshkin.
\newblock Positive semigroups and abstract {Lyapunov} equations.
\newblock {\em Positivity}, 19(1):1--21, 2015.

\bibitem{KrasnoselskiiLifshitsSobolev1989}
Mark~A. {Krasnosel'skii}, Evgeni{\u\i}~A. {Lifshits}, and Aleksandr~V.
  {Sobolev}.
\newblock {\em {Positive linear systems. - The method of positive operators.}}
\newblock Berlin: Heldermann-Verlag, 1989.

\bibitem{Krein1950}
Mark~G. Kre{\u\i}n and Mark~A. Rutman.
\newblock Linear operators leaving invariant a cone in a {B}anach space.
\newblock {\em Amer. Math. Soc. Translation}, 1950(26):128, 1950.

\bibitem{LiJia2021}
Desheng Li and Mo~Jia.
\newblock A dynamical approach to the {P}erron-{F}robenius theory and
  generalized {K}rein-{R}utman type theorems.
\newblock {\em J. Math. Anal. Appl.}, 496(2):Paper No. 124828, 22, 2021.

\bibitem{Marek1992}
Ivo Marek.
\newblock Collatz-{Wielandt} numbers in general partially ordered spaces.
\newblock {\em Linear Algebra Appl.}, 173:165--180, 1992.

\bibitem{MartinezMazon1996}
Josep {Martinez} and Jos{\'e}~M. {Mazon}.
\newblock {\(C_0\)-semigroups norm continuous at infinity.}
\newblock {\em {Semigroup Forum}}, 52(2):213--224, 1996.

\bibitem{Meyer2000}
Carl~D. Meyer.
\newblock {\em Matrix analysis and applied linear algebra}.
\newblock Philadelphia, PA: Society for Industrial {and} Applied Mathematics
  (SIAM), 2000.

\bibitem{MKG20}
Andrii Mironchenko, Christoph Kawan, and Jochen Gl\"uck.
\newblock Nonlinear small-gain theorems for input-to-state stability of
  infinite interconnections.
\newblock {\em Mathematics of Control, Signals, and Systems}, 33:573--615,
  2021.

\bibitem{MunozSarantopoulosTonge1999}
Gustavo~A. {Mu\~noz}, Yannis {Sarantopoulos}, and Andrew {Tonge}.
\newblock {Complexifications of real Banach spaces, polynomials and multilinear
  maps.}
\newblock {\em {Stud. Math.}}, 134(1):1--33, 1999.

\bibitem{Mui2023}
Jonathan Mui.
\newblock Spectral properties of locally eventually positive operator
  semigroups.
\newblock {\em Semigroup Forum}, 106(2):460--480, 2023.

\bibitem{Nagel1986}
Rainer {Nagel}, editor.
\newblock {\em {One-parameter semigroups of positive operators.}}
\newblock Springer, Cham, 1986.

\bibitem{Nussbaum1981}
Roger~D. {Nussbaum}.
\newblock {Eigenvectors of nonlinear positive operators and the linear
  Krein-Rutman theorem.}
\newblock {Fixed point theory, Proc. Conf., Sherbrooke/Can. 1980, Lect. Notes
  Math. 886, 309-330 (1981).}, 1981.

\bibitem{Pazy1983}
Amnon {Pazy}.
\newblock {\em {Semigroups of linear operators and applications to partial
  differential equations.}}, volume~44.
\newblock Springer, 1983.

\bibitem{Penney1976}
Richard~C. Penney.
\newblock Self-dual cones in {Hilbert} space.
\newblock {\em J. Funct. Anal.}, 21:305--315, 1976.

\bibitem{Rue10}
Bj{\"o}rn~S R{\"u}ffer.
\newblock Monotone inequalities, dynamical systems, and paths in the positive
  orthant of {E}uclidean n-space.
\newblock {\em Positivity}, 14(2):257--283, 2010.

\bibitem{Schaefer1960}
Helmut~H. Schaefer.
\newblock Halbgeordnete lokalkonvexe {Vektorr{\"a}ume}. {III}.
\newblock {\em Math. Ann.}, 141:113--142, 1960.

\bibitem{Schaefer1967}
Helmut~H. {Schaefer}.
\newblock {Invariant ideals of positive operators in \(C(X)\). I}.
\newblock {\em {Ill. J. Math.}}, 11:703--715, 1967.

\bibitem{Schaefer1974}
Helmut~H. {Schaefer}.
\newblock {\em {Banach lattices and positive operators}}, volume 215.
\newblock Springer, Berlin, 1974.

\bibitem{SchaeferWolff1999}
Helmut~H. {Schaefer} and Manfred P.~H. {Wolff}.
\newblock {\em {Topological vector spaces}}.
\newblock New York, NY: Springer, 2nd ed. edition, 1999.

\bibitem{SchillingSongVondracek2012}
Ren\'e~L. {Schilling}, Renming {Song}, and Zoran {Vondra\v{c}ek}.
\newblock {\em {Bernstein functions. Theory and applications}}, volume~37.
\newblock Berlin: de Gruyter, 2012.

\bibitem{Simon1976}
Barry Simon.
\newblock The bound state of weakly coupled {Schr{\"o}dinger} operators in one
  and two dimensions.
\newblock {\em Ann. Phys.}, 97:279--288, 1976.

\bibitem{Stern1982}
Ronald~J. {Stern}.
\newblock {A note on positively invariant cones}.
\newblock {\em {Appl. Math. Optim.}}, 9:67--72, 1982.

\bibitem{Thieme1998}
Horst~R. {Thieme}.
\newblock {Remarks on resolvent positive operators and their perturbation}.
\newblock {\em {Discrete Contin. Dyn. Syst.}}, 4(1):73--90, 1998.

\bibitem{vanNeerven1992}
Jan {van Neerven}.
\newblock {\em {The adjoint of a semigroup of linear operators}}, volume 1529.
\newblock Berlin: Springer-Verlag, 1992.

\bibitem{vanNeerven1996}
Jan {van Neerven}.
\newblock {\em {The asymptotic behaviour of semigroups of linear operators.}},
  volume~88.
\newblock Basel: Birkh\"auser, 1996.

\bibitem{Vogt2022}
Hendrik Vogt.
\newblock Stability of uniformly eventually positive {{\(C_0\)}}-semigroups on
  {{\(L_p\)}}-spaces.
\newblock {\em Proc. Am. Math. Soc.}, 150(8):3513--3515, 2022.

\bibitem{Weis1995}
Lutz {Weis}.
\newblock {The stability of positive semigroups on \(L_ p\) spaces.}
\newblock {\em {Proc. Am. Math. Soc.}}, 123(10):3089--3094, 1995.

\bibitem{Weis1998}
Lutz {Weis}.
\newblock {A short proof for the stability theorem for positive semigroups on
  \(L_p(\mu)\).}
\newblock {\em {Proc. Am. Math. Soc.}}, 126(11):3253--3256, 1998.

\bibitem{Wickstead1975}
Anthony~W. Wickstead.
\newblock Compact subsets of partially ordered {Banach} spaces.
\newblock {\em Math. Ann.}, 212:271--284, 1975.

\bibitem{Yosida1980}
Kosaku {Yosida}.
\newblock {\em {Functional analysis. 6th ed}}, volume 123.
\newblock Springer, Berlin, 1980.

\bibitem{ZabreikoSmickih1979}
Pëtr~P. Zabre\u{\i}ko and S.~V. Smickih.
\newblock A theorem of {M}. {G}. {K}re\u{\i}n and {M}. {A}. {R}utman.
\newblock {\em Funktsional. Anal. i Prilozhen.}, 13(3):81--82, 1979.

\end{thebibliography}

\end{document}